\def\a{\alpha} \def\b{\beta}  \def\d{\delta}
  \def\e{\epsilon}
\def\dref#1{(\ref{#1})}
 \def\dfrac{\displaystyle\frac}
\def\be{\begin{equation}}
\def\bel{\begin{equation}\label}
\def\ee{\end{equation}}
\def\ba{\begin{array}}
\def\ea{\end{array}}
\def\banl{\begin{eqnarray}\label}
\def\ean{\end{eqnarray}}
\newtheorem{remark}{Remark}
[section]
\newtheorem{proposition}{Proposition}[section]
\newtheorem{corollary}{Corollary}[section]
\newtheorem{lemma}{Lemma}
[section]
\newtheorem{expl}{Example}
[section]
\numberwithin{equation}{section}
\theoremstyle{plain}
\newtheorem{thm}{Theorem}[section]
\begin{document}

\begin{frontmatter}
\title{Asymptotic Behavior of  Least Squares Estimator for  Nonlinear Autoregressive Models}
\runtitle{Asymptotic Behavior of  Least Squares Estimator}
%\thankstext{T1}{This work was supported in part by the National Natural Science Foundation of
%China under grants 61422308 and 11688101.}

\begin{aug}
\author{\fnms{Zhaobo} \snm{Liu}\thanksref{t2,m1}\ead[label=e1]{Liuzhaobo15@mails.ucas.ac.cn}},
\author{\fnms{Chanying} \snm{Li}\thanksref{t2,m1}\ead[label=e2]{cyli@amss.ac.cn}}
%\and
%\author{\fnms{Third} \snm{Author}\thanksref{t1,m2}
%\ead[label=e3]{third@somewhere.com}
%\ead[label=u1,url]{http://www.foo.com}}

%\thankstext{t1}{Some comment}
\thankstext{t2}{This work was supported in part by the National Natural Science Foundation of
China under grants 61422308 and 11688101.}
%\thankstext{t3}{Second supporter of the project}
\runauthor{Z. B. LIU AND C. LI}

\affiliation{Academy of Mathematics and Systems Science, Chinese
Academy of Sciences,  and  University of Chinese Academy of Sciences\thanksmark{m1}}
%\affiliation{Some University\thanksmark{m1} and Another University\thanksmark{m2}}

\address{Key Laboratory of Systems and Control\\
Academy of Mathematics and Systems Science\\
Chinese Academy of Sciences\\
Beijing 100190\\
People's Republic of China\\
and School of Mathematical Sciences\\
University of Chinese Academy of Sciences\\
Beijing 100049\\
People's Republic of China\\
\printead{e1}\\
\phantom{E-mail:\ }\printead*{e2}}

%\address{Address of the Third author\\
%Usually a few lines long\\
%Usually a few lines long\\
%\printead{e3}\\
%\printead{u1}}
\end{aug}

\begin{abstract}
This paper is concerned with the least squares estimator for  a basic class of nonlinear autoregressive models, whose outputs are not necessarily to be ergodic. Several asymptotic properties of the least squares estimator  have been established under mild conditions. These properties suggest
 the strong consistency of the least squares estimates in  nonlinear autoregressive models which are not divergent.

\end{abstract}

\begin{keyword}[class=MSC]
\kwd[Primary ]{62F12}
\kwd{62M10}
\kwd[; secondary ]{93E24}
\end{keyword}

\begin{keyword}
\kwd{nonlinear autoregressive models}
\kwd{ least squares}
\kwd{strong consistency}
\kwd{Harris recurrent}
\end{keyword}

\end{frontmatter}

\section{Introduction}
When it comes to estimating   nonlinear autoregressive (AR) models,
a typical case in the literature  is that  the  underlying series are ergodic. Based on this assumption, a series of  asymptotic theory has been established accordingly  (see \cite{chan93},\cite{chan98},\cite{lid12},\cite{zwx10}).
However, this good property is not always true. For example, we consider
\begin{eqnarray}\label{sys}
 y_{t+1}=\theta^{\tau}\phi(y_{t},\ldots,y_{t-n+1})+w_{t+1},~~~~~t\geq0,
\end{eqnarray}
%&=&\sum_{i=1}^{n}\theta^{\tau}(i)\phi^{(i)}(y_{t-i+1})+w_{t+1}
% \nonumber\\
% &=\sum_{i=1}^{n}\sum_{j=1}^{m_{i}}\theta_{ij}f_{ij}(y_{t-i+1})+w_{t+1}&,
% $\theta(i)=(\theta_{i1},\ldots,\theta_{im_{i}})^{\tau}\in \mathbb{R}^{m_i}$,
   where $\theta$   is the $m\times1$ unknown parameter vector, $y_t, w_t$   are the scalar  observations and random   noise signals, respectively. Moreover, $\phi : \mathbb{R}^n\to \mathbb{R}^m$ is a known Lebesgue measurable vector function.
   No doubt most functions $\phi$ produce  non-ergodic sequences $\{y_t\}$.
So, this article is intended to  identify parameter $\theta$ in model \dref{sys},  whose outputs are not necessarily to be ergodic.

 It is well known that the least squares (LS) estimator is one of the most efficient algorithm in parameter estimation and
its strong consistency  for  model \dref{sys} depends crucially on the minimal eigenvalue $\lambda_{\min}(t+1)$  of  matrix % ( \citep{St77}, \cite{laiwei82})
$$P_{t+1}^{-1}=I_{m}+\sum_{i=0}^{t}\phi(y_{t},\ldots,y_{t-n+1})\phi(y_{t},\ldots,y_{t-n+1}).$$
Specifically,
%denote $\lambda_{\min}(t+1)$ as the minimal eigenvalue of $P_{t+1}^{-1}$. In
in the Bayesian framework, %possess some independent gaussian distibutions,
\cite{ef63} and \cite{St77} showed
\begin{eqnarray}\label{77}
\left\lbrace \lim_{t\rightarrow+\infty}\lambda_{\min}(t+1)=+\infty\right\rbrace=\left\lbrace\lim_{t\rightarrow+\infty}\hat{\theta}_t=\theta\right\rbrace,
\end{eqnarray}
while \cite[Theorem 1]{laiwei82} and \cite[Lemma 3.1]{guo95} found that in the non-Bayesian framework, where $\{w_t\}$ is an approperiate martingale difference sequence,
\begin{equation}\label{jd}
\|\hat{\theta}_{t+1}-\theta\|^2=O\left(\frac{\log\left(\lambda_{\max}(t+1)\right)}{\lambda_{\min}(t+1)}\right),\quad \mbox{a.s.},
\end{equation}
where  $\lambda_{\max}(t+1)$ denotes the  maximal eigenvalue of $P_{t+1}^{-1}$.  Moreover, \cite{laiwei82} pointed out that
\begin{eqnarray}\label{lambdaminmax}
\log\left(\lambda_{\max}(t+1)\right)=o(\lambda_{\min}(t+1))
\end{eqnarray}
is  in some sense the weakest condition for the strong consistency of $\hat{\theta}_{t}$  in the non-Bayesian framework.

The eigenvalues of $P_{t+1}^{-1}$ depend on outputs $\{y_t\}$, which are produced by the nonlinear random system \dref{sys} automatically. So, checking $\lim_{t\rightarrow+\infty}\lambda_{\min}(t+1)=+\infty$ or
\dref{lambdaminmax} is not trivial  in general. But for the  linear AR model
\begin{eqnarray}\label{ARsys}
 y_{t+1}=\sum_{i=1}^n\theta_iy_{t-i+1}+w_{t+1},~~~~~t\geq0,
\end{eqnarray}
which is a special case of \dref{sys},  \cite{laiwei83} successfully  verified
\begin{eqnarray}\label{rit}
\liminf_{t\rightarrow+\infty}t^{-1}\lambda_{\min}(t+1)>0,\quad \mbox{a.s.}
\end{eqnarray}
and then completely  solved the strong consistency of the LS estimator for this basic situation. The verification of \dref{rit} in  \cite{laiwei83}, to some extent, attributes to the linear structure of model  \dref{ARsys}. As to nonlinear model \dref{sys},  we naturally  wonder if the LS estimator still  has the similar asymptotic  behavior.

In the next section, we shall establish the asymptotic properties of the LS estimator for model  \dref{sys}. By assuming some
mild conditions on $\phi$, the minimal eigenvalue of  $P_{t+1}^{-1}$ is estimated in both the Bayesian framework and non-Bayesian framework. We find that the LS estimates converge to the true parameter almost surely on the set where
vector
$(y_{t},\ldots,y_{t-n+1})^\tau$ does not diverge to infinity. Since most real system is not divergent, this means the LS estimator is very likely to be strong consistency when applied to model \dref{sys}  in practice. The proof of  the main results   is included in Section \ref{gsta}.
%\section{Notes}
%Footnotes\footnote{This is an example of a footnote.}
%pose no problem\footnote{And another one}.
\section{Main Results}\label{MR}
We first consider a simplified version of model \dref{sys}  by  restricting $\phi$  as
  \begin{eqnarray}\label{sysphi}
   \phi(z_1,\ldots,z_n)=\mbox{col}\lbrace\phi^{(1)}(z_1),\ldots,\phi^{(n)}(z_n)\rbrace,
   \end{eqnarray}
   where  $\phi^{(i)}=(f_{i1},\ldots,f_{im_i})^\tau : \mathbb{R}\to \mathbb{R}^{m_i},  i=1,\ldots,n$ are some  known Lebesgue measurable vector functions and $m_i\geq 1$ are $n$ integers satisfying $\sum_{i=1}^{n}m_i=m$.  Without loss of generality, let $y_t=0$ for $t<0$.
We discuss the parameter estimation of model \dref{sys} and \dref{sysphi} by two cases.      In  Subsection \ref{bframe}, parameter $\theta$ is treated as a random variable, while it is a fixed vector in Subsection \ref{ConstantP}.

Next, we establish the asymptotic theory of the LS estimator for the general AR model \dref{sys} in Subsection \ref{extension}.

\subsection{\textbf{Bayesian Framework}}\label{bframe}
Consider model \dref{sys} and \dref{sysphi}. Assume

\begin{description}

\item[A1]
The noise $\{w_t\} $ is an  i.i.d random sequence with  $w_1 \sim N(0,1)$ and  parameter $ \theta\sim N(\theta_0,I_{m})$ is independent of $\{w_t\}$.

\item[A2]  There are some open sets $\lbrace E_i\rbrace_{i=1}^{n}$ belonging to  $\mathbb{R}$ such that \\
(i) $f_{ij}\in C(\mathbb{R})$ and $f_{ij} \in C^{m_i}(E_i)$, $1\leq j\leq m_i$, $1\leq i\leq n$;\\
(ii) for every unit vector $x\in\mathbb{R}^{m}$, there is a point $y\in \prod\nolimits_{i=1}^{n} E_i$ such that
$ |\phi^{\tau}(y)x| \neq 0.   $
%\begin{eqnarray}
%\ell\left(\left\lbrace y\in \prod\nolimits_{i=1}^{n} E_i: |\phi^{\tau}(y)x|>0 \right\rbrace\right)>0,\nonumber
%\end{eqnarray}
%where $\ell$ denotes the Lebesgue measure.
\end{description}

\begin{remark}
By Assumption A2(ii), for every unit vector $x\in\mathbb{R}^{m}$,
\begin{eqnarray}
\ell\left(\left\lbrace y\in \prod\nolimits_{i=1}^{n} E_i: |\phi^{\tau}(y)x|>0 \right\rbrace\right)>0,\nonumber
\end{eqnarray}
where $\ell$ denotes the Lebesgue measure.
\end{remark}
When $n=1$, Assumption A2 can be relaxed as
\begin{description}
\item[A2']  $f_{1i} \in C^{m_1}(E_1), i=1,\ldots,m_1$ are linearly independent in $E_1$, and  $\phi$ is bounded in every compact set.
\end{description}
The LS estimate $\hat{\theta}_t$ for parameter $\theta$ can be recursively defined by
\begin{eqnarray}\label{LS2}
\left\{
\begin{array}{l}
\hat{\theta}_{t+1}=\hat{\theta}_t+P_{t+1}\phi_t(y_{t+1}-\phi_t^\tau\hat{\theta}_t)\\
P_{t+1}=P_t-(1+\phi_t^\tau P_t\phi_t)^{-1}P_t\phi_t\phi_t^\tau P_t,~~P_0=I_{m}\\
\phi_t=\phi(y_{t},\ldots,y_{t-n+1}),~~t\geq0
\end{array},
\right.
\end{eqnarray}
where $\hat{\theta}_0$ is the deterministic initial condition of the algorithm and $\phi_0$ is the random initial vector of  system \dref{sys}. Clearly, by \dref{sys} and \dref{LS2},
\begin{eqnarray}\label{yt}
P_{t+1}^{-1}=I_{m}+\sum_{i=0}^{t}\phi_i\phi
_i^{\tau}.
\end{eqnarray}
%Denote $\lambda_{\min}(t+1)$ as the minimal eigenvalue of $P_{t+1}^{-1}$. %In the bayesian framework where $\theta$ and $\{w_t\} $  fulfill Assumption A1, %possess some independent gaussian distibutions,
%\cite{ef63} and \cite{St77} imply
%\begin{eqnarray}\label{77}
%\left\lbrace \lim_{t\rightarrow+\infty}\lambda_{\min}(t+1)=+\infty\right\rbrace=\left\lbrace\lim_{t\rightarrow+\infty}\hat{\theta}_t=\theta\right\rbrace,
%\end{eqnarray}
%which means $\lambda_{\min}(t+1)$ is  crucial for the identification of parameter $\theta$.
%\begin{remark}\label{ergodic}
%Under Assumption A1,   $\lbrace y_t\rbrace_{t\geq 1}$  in model \dref{sys} is a homogeneous  Markov chain. If it is further ergodic
%and  %$\lbrace y_t\rbrace_{t\geq 1}$ and
%$\lbrace (y_{t+n},\ldots,y_{t+1})^{\tau}\rbrace_{t\geq 1}$ has an invariant distribution %$\pi_1$ and
%$\pi$  that
% \begin{eqnarray}\label{pi}
%0<\int_{\mathbb{R}^{n}} \phi(x)\phi^{\tau}(x) d\pi(x)<+\infty,
% \end{eqnarray}
% by \cite[Theorem 7.1.1]{rp68},    we know
%\begin{eqnarray}
%\lim_{t\rightarrow+\infty}\frac{1}{t}P_{t+1}^{-1}=\int_{\mathbb{R}^{n}} \phi(x)\phi^{\tau}(x) d\pi(x)>0.
%\end{eqnarray}
%and
%\begin{eqnarray}
%\lim_{t\rightarrow+\infty}\frac{1}{t}\sum_{i=1}^{n}|y_{i}|=\int_{\mathbb{R}} |x|d\pi_1(x)<\infty.
%\end{eqnarray}
%\end{remark}
%Since the ergodicity or \dref{pi}  in Remark \ref{ergodic} is not generally true,
We provide a simple way to estimate the minimal eigenvalue of $P_{t+1}^{-1}$, which is denoted as $\lambda_{\min}(t+1)$.  Let
\begin{eqnarray}\label{NtM}
N_t(M)\triangleq \sum_{i=1}^{t}I_{\lbrace \|Y_{i}\|\leq M\rbrace},
\end{eqnarray}
where $Y_t\triangleq (y_{t+n-1},\ldots,y_{t})^{\tau}$ and  $ M>0$ is a constant. Then, in terms of  $N_t(M)$, our estimate of $\lambda_{\min}(t+1)$ is readily available  by

 \begin{thm}\label{tzz}
Under Assumptions A1--A2, for any constant $M>0$,
\begin{eqnarray}
\liminf_{t\rightarrow+\infty}\frac{\lambda_{\min}(t+1)}{N_t(M)}>0\quad \mbox{a.s.}~\mbox{on}~ \Omega(M),
\end{eqnarray}
where $\Omega(M)\triangleq\left\lbrace \lim_{t\rightarrow+\infty}N_t(M)=+\infty\right\rbrace. $
\end{thm}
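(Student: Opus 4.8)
The plan is to show that on $\Omega(M)$, the matrix $P_{t+1}^{-1}$ accumulates eigenvalue mass at a rate comparable to $N_t(M)$, the number of times the state vector $Y_i$ has visited the ball $\{\|y\|\le M\}$. The key quantity to lower bound is $\lambda_{\min}(t+1) = \inf_{\|x\|=1} x^\tau P_{t+1}^{-1}x = 1 + \inf_{\|x\|=1}\sum_{i=0}^{t}(\phi_i^\tau x)^2$. Because $\phi$ has the block structure \dref{sysphi}, $\phi_i^\tau x$ depends only on the scalar coordinates $y_{i},\ldots,y_{i-n+1}$, i.e.\ on $Y_{i-n+1}$. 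So the natural idea is to restrict the sum to indices $i$ where $\|Y_{i-n+1}\|\le M$, and argue that over those indices one cannot have $\sum (\phi_i^\tau x)^2$ small uniformly in $x$, because Assumption A2(ii) (together with the remark) guarantees that for each unit $x$ the set $\{y\in\prod E_i : |\phi^\tau(y)x|>0\}$ has positive Lebesgue measure, and A2(i) gives the smoothness needed to make this quantitative on compacts.

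First I would fix $M$ and work on $\Omega(M)$. I would introduce an auxiliary ``excitation'' quantity: for a unit vector $x$ and a time window, count the indices $i$ with $\|Y_{i-n+1}\|\le M$ and $(\phi_i^\tau x)^2$ bounded below by some $\delta>0$; call the corresponding contribution to the sum. The heart of the argument is a \emph{conditional} Borel--Cantelli / martingale-type step: conditioned on the past $\mathcal{F}_i$, the next noise increment $w_{i+1}\sim N(0,1)$ is independent, so given that $\|Y_{i-n+1}\|\le M$ the conditional law of $Y_{i-n+2}$ has a density bounded below on the ball of radius $M$ (the Gaussian noise spreads the state), whence $\mathbb{P}(\|Y_{i-n+2}\|\le M,\ (\phi_{i+1}^\tau x)^2\ge\delta \mid \mathcal F_i)\ge c(\delta)>0$ on $\{\|Y_{i-n+1}\|\le M\}$, uniformly over unit $x$ after one first passes to a finite $\epsilon$-net of the unit sphere. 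Summing the conditional probabilities and invoking the conditional Borel--Cantelli lemma (or Lévy's extension / the strong law for martingale differences, à la Lai--Wei) shows that on $\Omega(M)$ the number of ``good'' excited steps up to time $t$ is bounded below by a positive fraction of $N_t(M)$ almost surely.

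Because the sphere is compact and $\phi$ is continuous, one has to be careful that $\delta$ can be chosen \emph{uniformly} in $x$: for this I would cover $\{\|x\|=1\}$ by finitely many balls of radius $\epsilon$, on each of which A2 gives a common $y^\ast\in\prod E_i$ and a neighborhood where $|\phi^\tau(\cdot)x|$ stays bounded away from $0$ for all $x$ in that ball (using continuity of $\phi$ and of the inner product, plus that $E_i$ is open), then take $\delta$ the minimum over the net. Summing $(\phi_i^\tau x)^2\ge\delta$ over the good steps gives $\sum_{i=0}^t(\phi_i^\tau x)^2 \ge \delta \cdot (\text{good count}) \ge c' N_t(M)$ for all unit $x$ simultaneously (the uniformity over the net upgrades to uniformity over the sphere at the cost of shrinking constants, using $|(\phi_i^\tau x)^2-(\phi_i^\tau x')^2|\le C\|\phi_i\|^2\epsilon$ and boundedness of $\phi_i$ on the event $\|Y_{i-n+1}\|\le M$). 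Taking the infimum over $x$ yields $\liminf_t \lambda_{\min}(t+1)/N_t(M)\ge c'>0$ a.s.\ on $\Omega(M)$.

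The main obstacle I anticipate is the uniform-in-$x$ lower bound on the conditional hitting probability together with the lower bound on $(\phi^\tau(Y)x)^2$: one needs to simultaneously control (a) the return to the ball $\{\|Y\|\le M\}$ — which is where the Gaussian density lower bound enters and where the block structure \dref{sysphi} and the one-step dynamics \dref{sys} must be exploited to see that $Y_{i+1}$ indeed has a density on the relevant region — and (b) the excitation $|\phi^\tau x|>0$ on a set of positive measure inside $\prod E_i$, making the two compatible on a common positive-measure event. The compactness/net argument handling the nonuniformity of $\phi$ over the sphere, and checking that the conditional Borel--Cantelli conclusion survives on the (random) set $\Omega(M)$ rather than on the whole space, are the delicate bookkeeping points; the $n>1$ case also requires tracking that a single noise increment moves only one coordinate of $Y$, so several consecutive steps (a block of length $n$) are needed to re-randomize the whole state vector before the excitation estimate can be applied.
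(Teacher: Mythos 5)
Your argument is correct in strategy but proceeds by a genuinely different and considerably more elementary route than the paper's. The paper works with the full excitation sets $U_x=\lbrace y:|\phi^{\tau}(y)x|>\delta^{*}\rbrace\cap\mathcal{S}$ and must therefore prove that the visit frequency of $\lbrace Y_i\rbrace$ to $U_x$ is bounded below \emph{uniformly over the uncountable family} of unit vectors $x$; this is what forces the construction of $\mathcal{S}$, the operators $\Lambda_l$, the Rolle-type counting lemmas, the box-counting bound of Lemma \ref{xy} and the finite class of lower processes of Lemma \ref{gxwy}, and it is essentially the only place the smoothness $f_{ij}\in C^{m_i}(E_i)$ is used. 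You dispose of the uniformity in $x$ before any probability enters: by A2(ii) and joint continuity of $(y,x)\mapsto\phi^{\tau}(y)x$, every unit $x_0$ admits a fixed bounded open set $V_{x_0}$ and a spherical neighborhood of $x_0$ on which $|\phi^{\tau}(y)x|\geq\delta_{x_0}$ for all $y\in V_{x_0}$, and compactness of the sphere leaves finitely many deterministic targets $V_1,\ldots,V_N$ and one $\delta>0$, whence $\lambda_{\min}(t+1)\geq 1+\delta^{2}\min_{k}\sum_{i}I_{\lbrace Y_{i-n+1}\in V_k\rbrace}$ and only finitely many ordinary conditional Borel--Cantelli/martingale-LLN arguments are needed. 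What the paper's machinery buys is the stronger uniform-frequency statement for the sets $U_x$ themselves (reused for Theorem \ref{tzz2} and inherited from \cite{lilam13}); what yours buys, for Theorem \ref{tzz}, is the elimination of Subsections \ref{cu}--\ref{32}. Three details must still be handled as the paper does: for $n>1$ the one-step conditional law of the state vector is degenerate, so the hitting bound has to be taken over blocks of length $n$ through the triangular change of variables of Lemma \ref{inf>0}(i), as in \dref{pk1}, and the centered indicators are martingale differences only after splitting the indices modulo $n$; under A1 the prior on $\theta$ is Gaussian, so the conditional mean $\theta^{\tau}\phi_i$ is not bounded on $\lbrace\|Y_{i-n}\|\leq M\rbrace$ alone and the lower bound on $P(Y_i\in V_k\mid\mathcal{F}_{i-1}^{y})$ holds only on $\lbrace\|Y_{i-n}\|\leq M,\|\theta\|\leq K\rbrace$, so you must localize and let $K\to\infty$ exactly as in Proposition \ref{cruc}; and you should rely on your common-neighborhood cover rather than the Lipschitz transfer from an $\epsilon$-net, since the latter needs $\epsilon$ smaller than a random excitation constant attached to the net points and is circular unless the cover is fixed deterministically in advance.
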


\begin{corollary}\label{a12}
Let Assumptions A1--A2 hold. Then,
\begin{eqnarray}\label{strconsis}
\lim_{t\rightarrow+\infty}\hat{\theta}_t=\theta\quad \mbox{a.s.}~\mbox{on}~\left\lbrace \liminf_{t\rightarrow+\infty}\|Y_{t}\|<+\infty\right\rbrace
\end{eqnarray}
\end{corollary}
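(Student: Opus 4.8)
The plan is to read off the corollary from Theorem~\ref{tzz} together with the Bayesian characterization \dref{77} of strong consistency, after a routine reduction of the event $\{\liminf_{t}\|Y_t\|<+\infty\}$ to a countable union of the events $\Omega(M)$. First I would observe that if $\liminf_{t\to\infty}\|Y_t\|<+\infty$, then $\liminf_{t\to\infty}\|Y_t\|<M$ for some integer $M\geq1$, and since $\inf_{t\geq n}\|Y_t\|\leq\liminf_{t}\|Y_t\|<M$ for every $n$, there are infinitely many $t$ with $\|Y_t\|<M$, whence $N_t(M)\to+\infty$; that is, we are on $\Omega(M)$. Consequently
\[
\Big\{\liminf_{t\to\infty}\|Y_t\|<+\infty\Big\}\subseteq\bigcup_{M=1}^{\infty}\Omega(M).
\]

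Next, I would fix $M\in\mathbb{N}$ and work on $\Omega(M)$. By Theorem~\ref{tzz}, outside a null set one has $\liminf_{t\to\infty}\lambda_{\min}(t+1)/N_t(M)>0$ on $\Omega(M)$, so there is a random constant $c>0$ with $\lambda_{\min}(t+1)\geq c\,N_t(M)$ for all large $t$; since $N_t(M)\to+\infty$ on $\Omega(M)$ by definition, this forces $\lambda_{\min}(t+1)\to+\infty$ a.s. on $\Omega(M)$. (Note $\lambda_{\min}(t+1)$ is nondecreasing in $t$ because $P_{t+1}^{-1}=I_m+\sum_{i\leq t}\phi_i\phi_i^\tau$ grows by positive semidefinite increments, so the $\liminf$ is actually a limit.) I would then invoke \dref{77}: under Assumption~A1 we are in the Bayesian setting, where the sets $\{\lim_t\lambda_{\min}(t+1)=+\infty\}$ and $\{\lim_t\hat\theta_t=\theta\}$ agree up to a null set, so $\hat\theta_t\to\theta$ a.s. on $\Omega(M)$.

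Finally, I would take the union over $M$: discarding the countably many exceptional null sets (one for each $M$) yields $\hat\theta_t\to\theta$ a.s. on $\bigcup_{M\geq1}\Omega(M)$, and hence a fortiori on the smaller set $\{\liminf_t\|Y_t\|<+\infty\}$, which is \dref{strconsis}. I do not expect a genuine obstacle in this argument; given Theorem~\ref{tzz} and \dref{77} it is essentially bookkeeping, the only delicate points being the inclusion $\{\liminf_t\|Y_t\|<M\}\subseteq\Omega(M)$ and the correct handling of the $M$-indexed null sets in the countable union. The real difficulty of the paper is concentrated entirely in the proof of Theorem~\ref{tzz}.
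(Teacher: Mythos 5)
Your argument is correct and is exactly the route the paper intends (it leaves the corollary unproved as an immediate consequence): decompose $\{\liminf_t\|Y_t\|<+\infty\}$ into the countable union of the sets $\Omega(M)$, use Theorem \ref{tzz} together with $N_t(M)\to+\infty$ to get $\lambda_{\min}(t+1)\to+\infty$ a.s.\ on each $\Omega(M)$, and then conclude via the Bayesian equivalence \dref{77}. The handling of the inclusion into $\bigcup_M\Omega(M)$ and of the countably many null sets is done correctly, so there is nothing to add.
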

\begin{remark}
If Assumption A2(ii)  fails, then
%For a typical case where  $\prod_{i=1}^{n} E_i=\mathbb{R}^n$, if
 $$\ell(\lbrace y\in \mathbb{R}^n: |\phi^{\tau}(y)x|>0\rbrace)=0$$ for some unit vector $x\in\mathbb{R}^{m}$. Therefore, by \dref{yt}, as $t\rightarrow\infty$,
$$\lambda_{\min}(t+1)=O(1),   \quad  \mbox{a.s.}.$$
In view of \dref{77},   $\hat{\theta}_t$ cannot converge to the true parameter $\theta$. So,   Assumption A2(ii) is necessary for the strong consistency of the LS estimates $\lbrace \hat{\theta}_t\rbrace_{t\geq 0}$.
\end{remark}

\subsection{\textbf{Constant Parameter}}\label{ConstantP}
Consider model \dref{sys} and \dref{sysphi}, where $\theta$ is a non-random parameter.   Assume
\begin{description}
\item[A1']
$\{w_t\} $ is an  i.i.d random sequence with $E w_1=0$ and $E|w_1|^{\beta}<+\infty$ for some $\beta>2$. Moreover,
$w_1$ has
 a density  $\rho(x)$  such that
 for every proper interval $I\subset\mathbb{R}$,
 $$%=\overline{\rho}
 \inf_{x\in I}\rho(x)>0
\quad \mbox{and}\quad  \sup_{x\in \mathbb{R}}\rho(x)<+\infty.$$
\end{description}
%In order to identify the parameter $\theta$,
In this case,  the LS estimator is constructed from partial data. More specifically,  for some constant $C_{\phi}>0$,   $\phi_t$ in \dref{LS2} is modified as
 $$\phi_t\triangleq I_{\lbrace \|Y_{t-n+1}\|\leq C_{\phi}\rbrace}\phi(y_{t},\ldots,y_{t-n+1}).$$
%\begin{eqnarray*}\label{LS3}
%\left\{
%\begin{array}{l}
%\hat{\theta}_{t+1}=\hat{\theta}_t+I_{\lbrace \|Y_{t-n+1}\|\leq C_{\phi}\rbrace}P_{t+1}\phi_t(y_{t+1}-\phi_t^\tau\hat{\theta}_t)\\
%P_{t+1}=P_t-I_{\lbrace \|Y_{t-n+1}\|\leq C_{\phi}\rbrace}(1+\phi_t^\tau P_t\phi_t)^{-1}P_t\phi_t\phi_t^\tau P_t,~~P_0=I_{m}\\
%%\phi_t\triangleq{\phi(y_{t},\ldots,y_{t-n+1})},~~t\geq0
%\end{array},
%\right.
%\end{eqnarray*}
%Accordingly,  $P_{t+1}^{-1}$  becomes
%\begin{eqnarray}\label{LS}
%\left\{
%\begin{array}{l}
%\theta_{t+1}=\theta_t+P_{t+1}\phi_t(y_{t+1}-\phi_t^\tau\theta_t)\\
%P_{t+1}=P_t-(1+\phi_t^\tau P_t\phi_t)^{-1}P_t\phi_t\phi_t^\tau P_t,~~P_0=I_{m}\\
%\phi_t\triangleq{\phi(y_t,\ldots,y_{t-n+1})},~~t\geq0
%\end{array},
%\right.
%\end{eqnarray}
%where $\theta_0$ is the deterministic initial condition of the algorithm, $\phi_0$ is an initial vector of the system.
%Hence by \dref{yt},
%\begin{eqnarray}\label{p-12}
%P_{t+1}^{-1}=I_{m}+\sum_{i=0}^{t}
%\phi_i\phi_i^{\tau}.
%\end{eqnarray}
Let $\lambda_{\min}(t+1)$ and $\lambda_{\max}(t+1)$ denote the minimal and maximal eigenvalues of $P_{t+1}^{-1}$ in \dref{yt}.
Define
$r_t\triangleq \sum_{i=0}^{t}\|\phi_i\|^2+1$ as the trace of $P_{t+1}^{-1}$.  Note that  $\frac{r_t}{ \lambda_{\max}(t+1) }\in [1,n] $ and $r_{t}=O(N_t(C_{\phi}))$, where $N_t(\cdot)$ is defined by \dref{NtM}. Then,
an analogous version of Theorem \ref{tzz} is deduced as follows:

\begin{thm}\label{tzz1}
Under Assumptions A1' and A2, there is a constant $M_{\phi}>0$  depending only on $\phi$ such that for any  $C_{\phi}>M_{\phi}$  and  $M>0$,
\begin{eqnarray*}
\liminf_{t\rightarrow+\infty}\frac{\lambda_{\min}(t+1)}{N_t(M)}>0% \,\,\, \mbox{and}\,\,\, \|\hat{\theta}_t-\theta\|^2=O\left(\frac{\log N_t(M) }{N_t(M)}\right)
\quad \mbox{a.s.}~\mbox{on}~\Omega(M).
\end{eqnarray*}
Furthermore, if $M\geq C_{\phi}$, then $\|\hat{\theta}_t-\theta\|^2=O(\frac{\log N_t(M) }{N_t(M)})$     a.s. on set $\Omega(M)$.
%\begin{eqnarray}
%\lim_{t\rightarrow+\infty}\theta_t=\theta,\quad\mbox{a.s.}~\mbox{on}~\lbrace r_t=e^{o(N_t(M))}\rbrace \cap \Omega(M).
%\end{eqnarray}
\end{thm}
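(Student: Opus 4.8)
The plan is to prove the minimal–eigenvalue bound first and then read off the convergence rate from \dref{jd}.

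\textbf{Step 1: a geometric skeleton from A2.} Working only from Assumption A2, and using that $(y,x)\mapsto\phi^\tau(y)x$ is continuous while, for each unit $x\in\mathbb R^m$, it is nonzero at some point of $\prod_{i=1}^nE_i$, I would cover the unit sphere of $\mathbb R^m$ by finitely many relatively open sets $V_1,\dots,V_L$, and choose points $\tilde y_1,\dots,\tilde y_L\in\prod_{i=1}^nE_i$, open balls $W_l\ni\tilde y_l$, and a constant $\delta>0$, such that $|\phi^\tau(y)x|\ge\delta$ whenever $y\in W_l$ and $x\in V_l$. Fix a constant $M_\phi>0$, depending only on $\phi$, with $\bigcup_{l=1}^LW_l\subset B(0,M_\phi)$; since $C_\phi>M_\phi$, the truncation indicator in $\phi_t$ equals $1$ whenever $Y_{t-n+1}\in W_l$. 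The crucial consequence is the matrix inequality: for \emph{any} $y_1\in W_1,\dots,y_L\in W_L$, every unit $x$ belongs to some $V_l$, hence $x^\tau\!\big(\sum_{l=1}^L\phi(y_l)\phi^\tau(y_l)\big)x\ge\delta^2$, i.e.\ $\sum_{l=1}^L\phi(y_l)\phi^\tau(y_l)\succeq\delta^2I_m$.

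\textbf{Step 2: excitation along recurrence times.} Let $\sigma_1<\sigma_2<\cdots$ enumerate the times $s$ with $\|Y_{s-n+1}\|\le M$ (stopping times for the natural filtration $\{\mathcal F_t\}$; their number up to $t$ is $N_t(M)+O(1)$). Because of the block structure \dref{sysphi}, over $n$ consecutive steps the noise increments can be prescribed one coordinate at a time so as to steer $(y_{s+n},\dots,y_{s+1})$ into any target ball $W_l\subset B(0,M_\phi)$, with the required increments confined to a compact set depending only on $M,M_\phi,\theta,\phi$; combined with $\inf_{x\in I}\rho(x)>0$ on bounded intervals $I$, this shows that the event $B_k$ that $Y$ passes successively through $W_1,W_2,\dots,W_L$ during $[\sigma_k,\sigma_k+Ln]$ satisfies $P(B_k\mid\mathcal F_{\sigma_k})\ge p$ on $\{\sigma_k<\infty\}$, for a constant $p>0$ independent of $k$. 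On $B_k$, at the $L$ times $i$ with $Y_{i-n+1}\in W_l$ the truncation is inactive and Step 1 gives $\sum_{i=\sigma_k}^{\sigma_k+Ln}\phi_i\phi_i^\tau\succeq\delta^2I_m$. Keep a maximal subfamily $k_1<k_2<\cdots$ with the windows $[\sigma_{k_j},\sigma_{k_j}+Ln]$ pairwise disjoint; these excitation blocks then add up inside $P_{t+1}^{-1}$, and since each window contains at most $Ln+1$ of the $\sigma_k$, the number $J_t$ of windows closed by time $t$ obeys $J_t\ge N_t(M)/(Ln+1)-O(1)$. Finally, $\sum_{j\le J}\!\big(I_{B_{k_j}}-P(B_{k_j}\mid\mathcal F_{\sigma_{k_j}})\big)$ is a martingale with increments bounded by $1$, hence $o(J)$ a.s.\ on $\{J_t\to+\infty\}\supseteq\Omega(M)$, so $\sum_{j\le J_t}I_{B_{k_j}}\ge\tfrac p2 J_t$ eventually, and
\[
\lambda_{\min}(t+1)\ \ge\ \delta^2\!\!\sum_{j\le J_t}\!I_{B_{k_j}}\ \ge\ \frac{\delta^2p}{2(Ln+1)}\,N_t(M)-O(1)\qquad\text{ a.s. on }\Omega(M),
\]
which is the first assertion.

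\textbf{Step 3: the rate.} Under A1', $\{w_t\}$ is a martingale difference sequence with $\sup_tE(|w_{t+1}|^\beta\mid\mathcal F_t)=E|w_1|^\beta<\infty$, $\beta>2$, $\phi_t$ is $\mathcal F_t$-measurable, and at each time with $\|Y_{t-n+1}\|\le C_\phi$ one has $y_{t+1}=\phi_t^\tau\theta+w_{t+1}$ while the remaining times contribute nothing to the least-squares objective; hence \dref{jd} applies and gives $\|\hat\theta_{t+1}-\theta\|^2=O(\log\lambda_{\max}(t+1)/\lambda_{\min}(t+1))$ a.s. Since $\lambda_{\max}(t+1)\le r_t=1+\sum_{i=0}^t\|\phi_i\|^2\le1+\big(\sup_{\|z\|\le C_\phi}\|\phi(z)\|^2\big)(N_t(C_\phi)+O(1))$ and $N_t(C_\phi)\le N_t(M)$ because $M\ge C_\phi$, we get $\log\lambda_{\max}(t+1)=O(\log N_t(M))$ on $\Omega(M)$; together with Step 2 this yields $\|\hat\theta_t-\theta\|^2=O\big(\frac{\log N_t(M)}{N_t(M)}\big)$ a.s.\ on $\Omega(M)$.

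\textbf{Main obstacle.} The delicate part is Step 2: obtaining a bound $p$ on $P(B_k\mid\mathcal F_{\sigma_k})$ that is uniform in $k$ and in the value of $Y$ at time $\sigma_k$ inside $B(0,M)$, which forces one to keep every intermediate state of the steered excursion inside a single fixed compact set so that all the controlled noise increments fall where $\rho$ is bounded away from zero; and accounting correctly for the overlaps of the windows $[\sigma_k,\sigma_k+Ln]$ so that the excitation blocks genuinely add inside $P_{t+1}^{-1}$. The geometric skeleton of Step 1 is essentially the one used in the proof of Theorem \ref{tzz}.
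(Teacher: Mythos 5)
Your proposal is correct in outline, but it follows a genuinely different route from the paper. The paper proves Theorem \ref{tzz1} through Proposition \ref{cruc}: for each unit vector $x$ it constructs a direction-dependent excitation set $U_x=\lbrace y:|\phi^{\tau}(y)x|>\delta^{*}\rbrace\cap\mathcal{S}$ (Subsection \ref{cu}, which is where the operators $\Lambda_l$, the Rolle-type arguments and Lemmas \ref{useful}--\ref{inf>0} are needed), bounds the one-block conditional hitting probability $P(Y_i\in U_x|\mathcal{F}_{i-1}^{y})\geq C_P$ uniformly in $x$ via the change of variables $g(\varsigma,y,x)$ of Lemma \ref{inf>0}, and then spends the heavy machinery of Lemmas \ref{sdf}--\ref{xy} and Appendix \ref{AppA} on a grid count $O(r^{n-1})$ of $\partial U_x$ so that the uncountable family $\lbrace U_x\rbrace$ admits a finite class of minorizing processes (Lemma \ref{gxwy}), making the martingale law of large numbers uniform over the sphere. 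You sidestep the uniformity-in-$x$ issue entirely: the compactness covering of the unit sphere yields one fixed finite family $W_1,\ldots,W_L$ with $\sum_{l=1}^{L}\phi(y_l)\phi^{\tau}(y_l)\succeq\delta^2 I_m$ for arbitrary $y_l\in W_l$, so a single scalar conditional Borel--Cantelli argument for the block events $B_k$ suffices, and the analogue of Lemma \ref{inf>0}(ii) reduces to an elementary steering bound $P(B_k|\mathcal{F}_{\sigma_k})\geq p$ using only boundedness of $\phi$ and $\theta$ on compacts plus $\inf_{I}\rho>0$. The trade-off: the paper obtains excitation of order $(\delta^{*})^2C_P$ per $n$-step block in every direction, whereas you pay the factor $p/(Ln+1)$ with $p$ typically tiny; for the qualitative claim $\liminf\lambda_{\min}(t+1)/N_t(M)>0$ this is immaterial, and in exchange Subsections \ref{cu}--\ref{32} become unnecessary for this theorem (your argument would also cover Theorem \ref{tzz}, the Gaussian density being bounded below on compacts). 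In writing it up you should still make explicit: (i) the filtration bookkeeping that turns $\lbrace I_{B_{k_j}}-P(B_{k_j}|\mathcal{F}_{\sigma_{k_j}})\rbrace$ into a genuine martingale difference sequence along the stopping times $\sigma_{k_j}$ (future noises are independent of $\mathcal{F}_{\sigma_{k_j}}$ because $\lbrace w_t\rbrace$ is i.i.d.); (ii) that $p$ may depend on $M$ and $\|\theta\|$, which is harmless since $\theta$ is a fixed vector in Theorem \ref{tzz1} (the paper's Proposition \ref{cruc} keeps $\theta$ random on $\lbrace\|\theta\|\leq K\rbrace$ so as to serve Theorem \ref{tzz} too); and (iii) the justification that \dref{jd} applies despite the truncation, namely the identity $\phi_i y_{i+1}=\phi_i\phi_i^{\tau}\theta+\phi_i w_{i+1}$ valid at every $i$ (both sides vanish when the indicator is $0$), which gives the standard error decomposition $\hat{\theta}_{t+1}-\theta=P_{t+1}(\hat{\theta}_0-\theta)+P_{t+1}\sum_{i\leq t}\phi_i w_{i+1}$ verbatim, after which your Step 3 matches the paper's own use of $r_t=O(N_t(C_{\phi}))$ and $N_t(C_{\phi})\leq N_t(M)$.
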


\begin{remark}
%In view of the proof later, there is a constant $M_{\phi}>0$ depends on $\phi$, such that Theorem \ref{tzz1} follows for any $C_{\phi}\geq M_{\phi}$.
Theorem \ref{tzz1} indicates that  \dref{strconsis} holds  under Assumptions A1' and A2. In most practical situations,
\begin{eqnarray}\label{Hrecurrent}
P\left\lbrace \liminf_{t\rightarrow+\infty}\|Y_{t}\|<+\infty\right\rbrace=1
\end{eqnarray}
and the strong consistency of the LS estimates is thus guaranteed. Note that Assumption A1' and \dref{Hrecurrent} imply that $\{y_t\}_{t\geq 1}$  in model \dref{sys}  is in fact an aperiodic Harris recurrent Markov chain and hence admits an invariant measure. Some  integrability  assumptions on the invariant measure might also lead to the consistency of the LS estimates (e.g.\cite{lid16}). However, it is  not yet clear  that  the  invariant measure  of  such  a   nonlinear autoregressive model        ever has the desired properties for estimation. %Instead, verifying Assumption A2 is much easier.

 %Actually, the parameter estimation under this recurrent property has well studied in \cite{lid16}, but in  our settings, we need not any assumption about the invariant measure. In general, it is hard to obtain the invariant measure of a nonlinear autoregressive model.
\end{remark}

%\begin{corollary}
%Under Assumptions A1' and A2, for any  $M>0$,
%\begin{eqnarray}
%\lim_{t\rightarrow+\infty}\theta_t=\theta,\quad\mbox{a.s.}~\mbox{on}~\lbrace \lambda_{\max}(t+1)=e^{o(N_t(M))}\rbrace.
%\end{eqnarray}
%\end{corollary}
\begin{expl}
Consider a parametric autoregressive model of the form:
\begin{eqnarray}\label{tar}
y_{t+1}=\sum_{j=1}^{n}\theta_j g(y_{t})I_{\lbrace y_{t}\in D_j\rbrace}+y_{t}I_{\lbrace y_{t}\in D_{n+1}\rbrace}+w_{t+1},\quad y_0=0,
\end{eqnarray}
where
$g(\cdot)$ is bounded in any compact set, $\lbrace D_j\rbrace_{j=1}^{n}$ are some compact subsets of $\mathbb{R}$ with positive Lebesgue measure and $D_{n+1}=(\bigcup_{j=1}^{n}D_i)^c$. Let noises $\lbrace w_t\rbrace_{t\geq 1} $ satisfy Assumption A1' and  unknown parameters $\theta_1,\ldots,\theta_n\in\mathbb{R}$. Considering the properties of random walks, $\lbrace y_{t}\rbrace_{t\geq 1}$ must fall into $\bigcup_{j=1}^{n}D_i$ infinitely many times. Then,  it follows that $\lbrace y_{t}\rbrace_{t\geq 1}$ fulfills \dref{Hrecurrent}. Hence  Theorems \ref{tzz} and \ref{tzz1} can be applied and the strong consistency of the LS estimates is established. If $g(x)=x$,  model \dref{tar} turns out to be the familiar   threshold autoregressive  (TAR) model.
 %It is worth nothing that  \cite{gao13} deduces the consistency rather than strong consistency of ordinary least squares estimators of TAR model for $k=2$.
\end{expl}

\subsection{\textbf{Asymptotic Theory for General Model}}\label{extension}
Let us return to  model \dref{sys} and rewrite
$$\phi(z)=\mbox{col}\lbrace f_1(z),\ldots,f_m(z)\rbrace,$$
   where $z=(z_1,\ldots,z_n)^{\tau}$ and $f_i : \mathbb{R}^n\to \mathbb{R},  i=1,\ldots,n$ are some  known Lebesgue measurable vector functions.
A natural question in this part is  whether the asymptotic behavior of the LS estimator in Theorems \ref{tzz} and \ref{tzz1} still holds for model \dref{sys}? To  this end, assume
%In words, when output does not diverge to infinity,  is the strong consistency of the least squares estimates valid for any model \dref{sysg} with linearly independent $f_1,\ldots,f_m$? To consider this problem, we first provide a new assumption.

\begin{description}
\item[A3] There is a bounded open set $E\subset \mathbb{R}^{n}$ and a number $\delta^{*}>0$ such that\\
(i) $f_i\in C(\mathbb{R}^n)$, $1\leq i\leq m$;\\
(ii) for every unit vector $x\in\mathbb{R}^n$,
\begin{eqnarray}\label{Jordan}
J\left(\lbrace y\in \overline{E}: |\phi^{\tau}(y)x|=\delta^{*}\rbrace \right)=0,
\end{eqnarray}
where $J(\cdot)$ denotes the Jordan measure. In addition,
\begin{eqnarray}\label{dl*}
\inf_{\|x\|=1}\ell\left(\lbrace y\in E: |\phi^{\tau}(y)x|>\delta^{*}\rbrace \right)>0.
\end{eqnarray}

\end{description}

%\begin{remark}
%According to the definition of the Jordan measure, \dref{Jordan} in Assumption A3(ii) means that if we equally divide a box $O\supset E$ into a sufficient number of small boxes, then the volume of these boxes that cover  $\lbrace y\in E: |\phi^{\tau}(y)x|=\delta^{*}\rbrace $ can be taken as  small as desired. We point out that A3 would be satisfied under some mild condition on $f_1,\ldots,f_n$  can be easily verified.
%\end{remark}

With the proof  placed in Appendix \ref{AppB}, our problem is addressed by

\begin{thm}\label{tzz2}
Theorems \ref{tzz} and \ref{tzz1} hold for model \dref{sys} if Assumption A2 is replaced by A3.
\end{thm}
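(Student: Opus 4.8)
The plan is to reduce Theorem \ref{tzz2} to Theorems \ref{tzz} and \ref{tzz1} by observing that the only place the special block structure \dref{sysphi} and Assumption A2 enter those proofs is through a single quantitative estimate: on the event $\{\|Y_i\|\le M\}$, the vectors $\phi_i$ "fill out" every direction often enough that the incremental contribution to $\lambda_{\min}(P_{t+1}^{-1})$ is bounded below by a constant times $N_t(M)$. So first I would isolate, from the proofs of Theorems \ref{tzz} and \ref{tzz1} in Section \ref{gsta}, the precise lemma that is actually used — presumably something of the form: there exist constants $c_0>0$, $L\in\mathbb{N}$ and a set of "good" index-blocks of length $L$ such that, whenever $L$ consecutive states $Y_{i},\ldots,Y_{i+L-1}$ stay in the ball of radius $M$, one has $\sum_{k=0}^{L-1}(x^\tau\phi_{i+k})^2\ge c_0$ uniformly over unit vectors $x$ — and then show that A3 supplies exactly this lemma in the general (non-separable) case.

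The key steps would be: (1) Fix a unit vector $x\in\mathbb{R}^m$ — wait, here $\phi:\mathbb{R}^n\to\mathbb{R}^m$ so $x$ ranges over unit vectors in $\mathbb{R}^m$; by \dref{dl*} the set $G_x\triangleq\{y\in E:|\phi^\tau(y)x|>\delta^*\}$ has Lebesgue measure bounded below uniformly in $x$, and by \dref{Jordan} its boundary (relative to $\overline E$) is Jordan-null, so $G_x$ is Jordan measurable with $J(G_x)=\ell(G_x)\ge c_1>0$. (2) Use the density lower bound on the noise from A1 (Gaussian, hence everywhere positive) or A1' (the hypothesis $\inf_{x\in I}\rho(x)>0$ on every bounded interval) together with the continuity of the $f_i$ to argue that, conditionally on $Y_{i}$ lying in a fixed compact set, the state $Y_{i+n}$ — or rather the next $n$ noise increments — has a positive conditional probability, bounded below uniformly, of steering the relevant coordinates into $G_x$; this is the mechanism by which "staying bounded infinitely often" forces the quadratic form to accumulate. (3) Apply a Borel–Cantelli / conditional-Borel–Cantelli (Lévy extension) argument, exactly as in the proof of Theorem \ref{tzz}, over a countable dense net of unit vectors $x$, upgrading the per-direction bound to the uniform bound $\lambda_{\min}(t+1)\ge c\,N_t(M)$ on $\Omega(M)$; the net argument works because $x\mapsto\ell(G_x)$ and the relevant conditional probabilities are continuous (or at least lower semicontinuous), which is where \dref{Jordan} earns its keep — Jordan measurability is what makes the "good set" stable under small perturbations of $x$ and under the discretization used to approximate Lebesgue measure by counting lattice points visited by the chain. (4) For the rate statement $\|\hat\theta_t-\theta\|^2=O(\log N_t(M)/N_t(M))$ under A1', quote \dref{jd} together with the trivial bound $\lambda_{\max}(t+1)\le r_t=O(N_t(M))$ already noted in the text before Theorem \ref{tzz1}.

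Concretely, I expect the proof to be organized as: replace every invocation of A2 in Section \ref{gsta} by the following substitute. Under A3, for every $M\ge \sup_{y\in\overline E}\|y\|$ (enlarging $M$ if necessary, which is harmless since $N_t(M)$ is monotone in $M$) and every unit $x$, the function $y\mapsto I_{\{|\phi^\tau(y)x|>\delta^*\}}$ has Jordan-measurable support of positive, $x$-uniform measure inside $\overline E\subset\{\|y\|\le M\}$; hence there is an integer $L$ and $c_0>0$ with $\inf_{\|x\|=1}\int_{[0,1]^{?}}\big(\sum\text{stuff}\big)\,\ge c_0$ — more honestly, there is a positive uniform lower bound on the conditional probability that $n$ fresh noise draws put $Y_{\cdot}$ into $G_x$, and on that event $(x^\tau\phi_\cdot)^2>(\delta^*)^2$. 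Everything downstream — the martingale decomposition of $\lambda_{\min}(t+1)$, the $N_t(M)$ normalization, the a.s. statements on $\Omega(M)$ — is then verbatim from Theorems \ref{tzz} and \ref{tzz1}.

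The main obstacle, and the part that will need genuine work rather than citation, is step (3): in the separable model \dref{sysphi} each coordinate $\phi^{(i)}(y_{t-i+1})$ depends on a single past output, so A2's pointwise nonvanishing condition localizes neatly and the excitation can be produced one coordinate at a time; in the general model $\phi$ mixes all $n$ lagged outputs, so one must control the joint law of the vector $Y_{i+n}\in\mathbb{R}^n$ under $n$ successive noise increments and show it has a density bounded below on a fixed ball — this is where A1/A1' (positive noise density on bounded intervals), the continuity of $\phi$, and the boundedness of $E$ must be combined carefully, and where the Jordan-measure condition \dref{Jordan}, as opposed to mere positive Lebesgue measure, becomes essential for transferring the continuous-space lower bound to the discrete counting functional $N_t(M)$ uniformly over the direction net. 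I would handle this by an explicit small-ball argument: condition on $\mathcal F_i$, note $Y_{i+n}=$ (deterministic function of $Y_i$) $+$ (invertible linear image of $(w_{i+1},\ldots,w_{i+n})$), so its conditional density at any point $z$ in a fixed ball is $\ge \prod_{k}\inf_{I}\rho$ over a suitable bounded interval $I$, uniformly; integrating that density over $G_x$ and using $J(G_x)=\ell(G_x)\ge c_1$ gives the required uniform lower bound on $P(Y_{i+n}\in G_x\mid\mathcal F_i)$, and then the conditional Borel–Cantelli argument from the proof of Theorem \ref{tzz} closes the loop.
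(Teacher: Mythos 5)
Your overall architecture is the same as the paper's: the proof of Theorem \ref{tzz2} is a rerun of Proposition \ref{cruc} (conditional small-ball lower bound \dref{pk1} obtained from the triangular change of variables and the density bound in A1$'$, a martingale LLN normalized by $N_t(M)$, and the rate via \dref{jd} with $\lambda_{\max}(t+1)\leq r_t=O(N_t(M))$), and you correctly note that under A3 the set $U_x=\{y\in E:|\phi^{\tau}(y)x|>\delta^{*}\}$ and the uniform measure bound \dref{dl*} can be plugged in directly, so that Lemma \ref{inf>0} needs no construction of $\mathcal{S}$. Your worry in the last paragraph is misplaced, though: the non-separable structure of $\phi$ is not the new difficulty — \dref{pk1} already treats general $\phi$ through the map $g(\varsigma,y,x)$ — and that part transfers verbatim.

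The genuine gap is your step (3), which is exactly the one place where the paper's Appendix \ref{AppB} does new work. The crux is uniformity over the \emph{uncountable} family of directions $x$: the paper achieves it by sandwiching each $U_x$ from inside by a set from a \emph{finite} class of grid-box unions (Lemma \ref{gxwy}), which requires the covering estimate \dref{suu} — the boxes of $\mathcal{T}(O,r)$ meeting $\partial U_x$ have total measure tending to $0$ uniformly in $x$ — and this is precisely what the Jordan-null condition \dref{Jordan} is used for, via a compactness argument on the unit sphere; it then feeds the density upper bound $\overline{\rho}$ to control $P(Y_i\in U_x\setminus U_\epsilon\mid\mathcal{F}_{i-1}^y)$ as in \dref{jc1}. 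Your substitute — a countable dense net plus ``continuity (or lower semicontinuity) of $x\mapsto\ell(G_x)$ and of the relevant conditional probabilities'' — does not close this step as stated: the empirical sums $\sum_i I_{\{Y_i\in U_x\}}$ are not continuous in $x$, the a.s.\ exceptional sets vary with the net point, and semicontinuity of $\ell(G_x)$ says nothing about the counts for off-net directions, so the claimed upgrade from per-direction to uniform is asserted rather than proved. A repair is available and is different from the paper's: at finitely many net points $x_j$ raise the threshold to $\delta^{*}+\eta_j$ so that, by continuity of $\phi$ on the bounded set $\overline{E}$, the shrunken set $\{y\in E:|\phi^{\tau}(y)x_j|>\delta^{*}+\eta_j\}$ is contained in $U_x$ for all $x$ in a neighborhood of $x_j$, with its measure kept above half of the bound in \dref{dl*} by continuity from below; compactness of the sphere then reduces everything to finitely many fixed sets, to which the LLN and \dref{pk1} apply. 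Note that this fix uses \dref{dl*} but not \dref{Jordan}, which also shows that your attribution of the role of \dref{Jordan} (``stability of the good set under perturbations of $x$'') does not match how the paper actually uses it — in the paper it is what makes the finite box-approximation class work, i.e.\ it is the A3-replacement for the combinatorial boundary count of Lemma \ref{xy}.
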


\begin{expl}
Consider the following exponential autoregressive  model (EXAR) with noises $\lbrace w_{t}\rbrace_{t\geq 1}$ satisfying A1':
\begin{eqnarray}\label{EXAR}
y_{t+1}=\sum_{j=1}^{n}(\alpha_j+\beta_j e^{-\gamma y_{t}^2})y_{t-j+1}+w_{t+1},
\end{eqnarray}
where $\gamma$ is known and $\a_j,\b_j, j=1,2,\ldots, n$ are unknown parameters. It can be checked that   Assumption A3 holds for model \dref{EXAR}. Furthermore, in most practical cases,  outputs $\lbrace y_{t}\rbrace_{t\geq 1}$  produced by  the above EXAR models   fulfill  \dref{Hrecurrent}. So,  the LS estimator is often effective for model \dref{EXAR} due to Theorem \ref{tzz2}.
\end{expl}

\section{Proofs of Theorems \ref{tzz} and \ref{tzz1}}\label{gsta}
It is obvious that to show Theorems \ref{tzz} and \ref{tzz1}, it suffices to prove
\begin{proposition}\label{cruc}
Under Assumptions  A1' and A2,  let $\theta$ be a random variable  independent of $\lbrace w_t\rbrace_{t\geq 1}$.%and let $P_{t+1}^{-1}$ be defined as \dref{p-12}.
Then, there is a constant $M_{\phi}>0$  depending only on $\phi$ such that for any  $C_{\phi}>M_{\phi}$  and  $M,K>0$,
\begin{eqnarray}\label{eigenN}
\liminf_{t\rightarrow+\infty}\frac{\lambda_{\min}(t+1)}{N_t(M)}>0\quad \mbox{a.s. on } \Omega(M)\cap \lbrace \|\theta\|\leq K\rbrace.
\end{eqnarray}
\end{proposition}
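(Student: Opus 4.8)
The plan is to reduce the statement to a quantitative ``excitation'' estimate on the regressors $\phi_t$ along the (random) times when $\|Y_{t-n+1}\|\le C_\phi$, and then to convert that into a lower bound on $\lambda_{\min}(t+1)$ via a martingale-type argument. First I would fix a direction $x\in\mathbb{R}^m$ with $\|x\|=1$ and observe that, by \eqref{yt}, $x^\tau P_{t+1}^{-1}x = 1+\sum_{i=0}^t (\phi_i^\tau x)^2$, so it suffices to show that for some deterministic $\eta>0$ (independent of $x$) one has $\sum_{i=0}^t(\phi_i^\tau x)^2 \ge \eta\, N_t(M)$ eventually, uniformly in $x$, almost surely on $\Omega(M)\cap\{\|\theta\|\le K\}$. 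A compactness/covering argument over the unit sphere then upgrades the per-direction bound to a bound on $\lambda_{\min}$, provided the exceptional null sets can be chosen uniformly; I would handle this by first proving the estimate for a countable dense set of directions together with a modulus-of-continuity estimate in $x$ coming from continuity of $\phi$ (Assumption A2(i)) on the compact set $\{\|z\|\le C_\phi\}$.

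The heart of the matter is a one-step conditional lower bound. Let $\mathcal{F}_t$ be the natural filtration; on the event $\{\|Y_{t-n+1}\|\le C_\phi\}$ (and on $\{\|Y_{t-n+2}\|\le M'\}$ for a suitable $M'$, say $M' = \max\{M, C_\phi\}$, to control the older coordinates), the vector $(y_t,\dots,y_{t-n+1})$ has $y_t = \theta^\tau\phi_{t-1} + w_t$, where $w_t$ is independent of $\mathcal{F}_{t-1}$ with a density $\rho$ bounded below on compacts (Assumption A1'). Hence, conditionally, $y_t$ ranges over an interval of length comparable to $C_\phi$ with density bounded below, while $y_{t-1},\dots,y_{t-n+1}$ are $\mathcal{F}_{t-1}$-measurable. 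Using the separated structure \eqref{sysphi}, $\phi_t^\tau x = \sum_{j=1}^n \phi^{(j)}(y_{t-j+1})^\tau x^{(j)}$, and the only term that is ``fresh'' given $\mathcal{F}_{t-1}$ is $\phi^{(1)}(y_t)^\tau x^{(1)}$; for the block where the old coordinates are fixed, I would invoke Assumption A2(ii) (via the Remark: positive Lebesgue measure of $\{|\phi^\tau(y)x|>0\}$) together with continuity to get that, integrating $y_t$ against $\rho$ over a large enough interval, $E[(\phi_t^\tau x)^2 \mid \mathcal{F}_{t-1}] \ge c(x) > 0$ on the conditioning event, and then a further compactness argument makes $c(x)\ge c_0>0$ uniformly — this is exactly where the threshold $M_\phi$ enters: $C_\phi$ must be large enough that the interval swept by $y_t$ (roughly $\|w_t\|$-sized around a point of magnitude $\le \|\theta\|\,\sup_{\|z\|\le C_\phi}\|\phi(z)\|$, cut by the density lower bound) is guaranteed to meet the set $\prod E_i$ where the functions are non-degenerate. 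Actually, the cleaner route, and the one I would pursue, is to not fix old coordinates but to bound the conditional second moment from below by restricting to a sub-event of $\mathcal{F}_{t-1}$ of positive probability where $(y_{t-1},\dots,y_{t-n+1})$ already lies in a good region, so that for a whole block of $n$ consecutive steps the regressors collectively excite direction $x$; averaging over the $n$ residual directions $x^{(1)},\dots,x^{(n)}$, at least one must be non-negligible, and A2(ii) guarantees the corresponding one-dimensional integral is positive.

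Given the uniform conditional lower bound $E[(\phi_t^\tau x)^2\mid\mathcal{F}_{t-1}]\,I_{\{\|Y_{t-n+1}\|\le C_\phi\}} \ge c_0\, I_{A_t}$ for an appropriate $\mathcal{F}_{t-1}$-event $A_t$ with $\sum_t I_{A_t}\gtrsim N_t(M)$ on $\Omega(M)$, I would finish by applying a strong law / Chow-type theorem for martingale differences: writing $S_t(x)=\sum_{i\le t}(\phi_i^\tau x)^2$ and $B_t(x)=\sum_{i\le t}E[(\phi_i^\tau x)^2\mid\mathcal{F}_{i-1}]$, the compensated sum $S_t(x)-B_t(x)$ is a martingale with increments bounded by $\|\phi_i\|^2\le \sup_{\|z\|\le C_\phi}\|\phi(z)\|^2$, so by the stability theorem for martingales (or Chow's lemma) $S_t(x)/B_t(x)\to 1$ on $\{B_t(x)\to\infty\}$, giving $S_t(x)\ge \tfrac12 c_0\, N_t(M)$ eventually. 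Combining over the countable dense set of directions and using the equicontinuity in $x$ yields $\liminf_t \lambda_{\min}(t+1)/N_t(M)\ge \tfrac12 c_0>0$ on $\Omega(M)\cap\{\|\theta\|\le K\}$, which is \eqref{eigenN}. The main obstacle I anticipate is making the excitation constant $c_0$ genuinely uniform in the direction $x$ and in the bounded parameter $\|\theta\|\le K$ while the ``fresh'' randomness only enters one coordinate at a time: one must carefully propagate the non-degeneracy from A2(ii) across the $n$-step window and verify that choosing $C_\phi>M_\phi$ large suppresses the boundary/leakage terms — this is the technical core of Proposition~\ref{cruc} and the reason the statement is phrased with an unspecified $M_\phi$ depending only on $\phi$.
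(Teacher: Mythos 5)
Your overall architecture (excitation bound at the $N_t(M)$-counted times, compensate by conditional expectations, martingale SLLN, then uniformity over the unit sphere) matches the paper's, but the core excitation step as you set it up has a genuine gap. The one-step bound $E[(\phi_t^\tau x)^2\mid\mathcal F_{t-1}]\ge c_0>0$ cannot hold uniformly in $x$: under \dref{sysphi} only the first block of $\phi_t$ is fresh given $\mathcal F_{t-1}$, so for directions with $x^{(1)}=0$ the quantity $\phi_t^\tau x$ is $\mathcal F_{t-1}$-measurable. Already in the linear AR($n$) case ($f_{i1}(z)=z$, which satisfies A2) and $x$ the last coordinate direction, $E[(\phi_t^\tau x)^2\mid\mathcal F_{t-1}]=y_{t-n+1}^2$, which has no positive lower bound on the conditioning events. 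So any valid argument must use the $n$-step refresh of the whole state vector $Y_i$, and your fallback does not deliver it: (i) you condition on the \emph{past} $(y_{t-1},\dots,y_{t-n+1})$ already lying in a good region, but the visit frequency of that region is not tied to $N_t(M)$ without a further argument --- what is needed is that at \emph{every} time with $\|Y_{i-n}\|\le M$, $\|\theta\|\le K$, the conditional law of the refreshed block $Y_i$ gives uniformly positive mass to a uniformly exciting set; and (ii) your blockwise reading of A2(ii) (``at least one residual direction non-negligible, hence the one-dimensional integral is positive'') is not what A2(ii) says: it is a joint non-degeneracy condition on $\phi$ over $\prod E_i$, not on the individual $\phi^{(j)}$. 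The missing content is precisely the paper's technical core: the innovation-to-output change of variables $g(\varsigma,y,x)$ in \dref{d=g}, its measure preservation (Lemma \ref{inf>0}(i)), and the compactness argument of Lemma \ref{inf>0}(ii) giving \dref{pk1}, i.e.\ a lower bound on $P(Y_i\in U_x\mid\mathcal F^y_{i-1})$ uniform in the unit direction $x$, in $\|Y_{i-n}\|\le M$ and in $\|\theta\|\le K$ --- which itself requires first constructing a bounded set $\mathcal S$ compactly contained in $\prod E_i$ on which the non-degeneracy of A2(ii) survives (Lemmas \ref{useful}--\ref{qq}), since $\prod E_i$ need not be bounded. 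Choosing $C_\phi>M_\phi$ large does not substitute for this; in the paper $C_\phi>nR'$ only ensures the truncation in $\phi_t$ does not discard the informative region $U_x\subset\overline{B(0,C_\phi)}$.

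Two further remarks. Your uniformity device over $x$ (countable net plus Lipschitz continuity of $x\mapsto(\phi_i^\tau x)^2$ on the truncated, hence bounded, regressors) is genuinely different from the paper's route, which builds a finite minorizing class of grid-box processes (Lemma \ref{gxwy}) supported by the combinatorial counting Lemma \ref{xy}; your idea could in principle replace that machinery and is simpler, but as written the modulus-of-continuity error scales with $N_t(C_\phi)$ rather than $N_t(M)$, and these are not comparable when $M<C_\phi$ --- you must run the net argument on sums restricted to times with $\|Y_{i-n}\|\le M$ (at most $N_t(M)$ bounded terms) so the error is absorbed by the uniform constant. The martingale SLLN step is fine in spirit (the paper invokes Chen--Guo, Theorem 2.8), modulo the measurability wrinkle that the increments involve $n$ future innovations; but without the uniform block-excitation bound described above the proof does not go through.
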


Borrowing the idea of \cite{lilam13},  the proof of Proposition \ref{cruc} will be completed  in the following three subsections.\\
\textit{Section \ref{cu}}: Observe that
\begin{eqnarray}
\lambda_{\min}(t+1)&=&\min_{\|x\|=1}x^{\tau}\left(I_{m}+\sum_{i=1}^{t}\phi_i\phi_i^{\tau}\right)x\nonumber\\
&=&1+\min_{\|x\|=1}\sum_{i=1}^{t}(\phi_i^{\tau}x)^2,\nonumber
\end{eqnarray}
so for any unit vector $x\in\mathbb{R}^{m}$, we shall construct a set $U_x\subset \overline{B(0,C_{\phi})}\subset\mathbb{R}^n$ such that $\inf_{y\in U_x}|\phi^{\tau}(y)x|\geq \delta$ for some $\delta>0$.\\
\textit{Section \ref{32}}: We shall analyze  the properties of $U_x$ and derive a key technique result for our problem in Lemma \ref{xy}.\\
\textit{Section \ref{33}}: This section is intended to prove \dref{eigenN} by estimating the  frequency of $\lbrace Y_t\rbrace _{t\geq 1}$ falling into $U_x$.
% is divided into two parts:\\
%(i) to estimate the  frequency of $\lbrace Y_t\rbrace _{t\geq 1}$ falling into $U_x$, we claim that
%\begin{eqnarray}
%&&\frac{1}{N_{t}(M)}\sum_{i=1}^{t}I_{\lbrace \|Y_{i-n}\|\leq M\rbrace}I_{\lbrace Y_i\in U_x\rbrace}\nonumber\\
%&>&\frac{1}{N_{t}(M)}\sum_{i=1}^{t}I_{\lbrace \|Y_{i-n}\|\leq M\rbrace}P(Y_{i}\in U_x|\mathcal{F}_{i-1}^{y})-\varepsilon,
%\end{eqnarray}
%for all sufficiently large $t$, where
%\begin{eqnarray}
%\mathcal{F}_{t}^y\triangleq\sigma\lbrace \theta, y_i,0\leqslant i\leqslant t\rbrace,~t=0,1...;
%\end{eqnarray}
%(ii) estimate $P(Y_{i}\in U_x|\mathcal{F}_{i-1}^{y})I_{\lbrace \|Y_{i-n}\|\leq M, \|\theta\|\leq K\rbrace}$.

\subsection{\textbf{Construction of $U_x$}}\label{cu}
The important set $U_x$ is constructed from a finite family of disjoint open intervals $\{S_{i}^{j}(q)\}$ defined below.
\subsubsection{ Open Intervals  $S_{i}^{j}(q)$}
We  claim that
 %\begin{lemma}\label{Sijq}
for each $i\in[1,n]$, there exists a finite family of disjoint open intervals $\lbrace S_{i}^{j}(q)\rbrace_{j=1}^{p_i}$  for some $q\in\mathbb{N}^{+}$  fulfilling%the properties
:\\
(i) $\phi^{(i)}\in C^{m_i}$ in $\bigcup_{j=1}^{p_i}S_{i}^{j}(q)$;\\
(ii) $\bigcup_{j=1}^{p_i}S_{i}^{j}(q)$ has no points in  $Z_{s}^{2}(i)$ defined later in \dref{z123};\\
(iii) For every unit vector $x\in\mathbb{R}^{m}$,
\begin{eqnarray}
\ell\left(\left\lbrace  y\in \prod\nolimits_{i=1}^{n}\bigcup\nolimits_{j=1}^{p_i}S_{i}^{j}(q): |\phi^{\tau}(y)x|>0\right\rbrace\right)>0.
\end{eqnarray}
%\end{lemma}
We preface the proof of the claim with  several auxiliary lemmas.
\begin{lemma}\label{useful}
Let $\lbrace U_j\rbrace_{j\geq 1}$ be a sequence of open sets in $\prod_{i=1}^{n} E_i$ satisfying $U_1\subset U_2\ldots\subset U_j\subset\ldots$ and
\begin{equation}\label{uj}
\lim_{j\rightarrow+\infty}U_{j}=U,
\end{equation}
where $U$ is a non-empty open set that
\begin{eqnarray*}
\ell(\lbrace y\in U : |\phi^{\tau}(y)x|>0\rbrace)>0,\quad \forall x\in \mathbb{R}^{m},~\|x\|=1.
\end{eqnarray*}
Then, there is an integer $j$ such that
\begin{eqnarray*}
\ell(\lbrace y\in U_j : |\phi^{\tau}(y)x|>0 \rbrace)>0,\quad \forall x\in \mathbb{R}^{m},~\|x\|=1.
\end{eqnarray*}
\end{lemma}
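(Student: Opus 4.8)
The plan is to reduce the statement to a compactness argument on the unit sphere $\mathbb{S}^{m-1}=\{x\in\mathbb{R}^m:\|x\|=1\}$. For every $j\ge 1$ and every unit vector $x$, set
\[
g_j(x)\triangleq \ell\bigl(\{\,y\in U_j:|\phi^{\tau}(y)x|>0\,\}\bigr),\qquad g(x)\triangleq \ell\bigl(\{\,y\in U:|\phi^{\tau}(y)x|>0\,\}\bigr).
\]
Since $U_1\subset U_2\subset\cdots$ and $\bigcup_{j\ge 1}U_j=U$, for each fixed $x$ the sets $\{y\in U_j:|\phi^{\tau}(y)x|>0\}$ increase to $\{y\in U:|\phi^{\tau}(y)x|>0\}$; hence, by continuity of Lebesgue measure from below, $g_j(x)\uparrow g(x)$ as $j\to\infty$. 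Because $g(x)>0$ by hypothesis, for every $x\in\mathbb{S}^{m-1}$ there is an index $j_x$ with $g_{j_x}(x)>0$. The whole difficulty is that $j_x$ a priori depends on $x$, and one must produce a single $j$ that works for all $x$ simultaneously.

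To this end I would first show that each $g_j$ is lower semicontinuous on $\mathbb{S}^{m-1}$. As $U_j\subset\prod_{i=1}^n E_i$ and each $f_{ij}$ is continuous on $E_i$ (Assumption A2(i)), the map $\phi$ is continuous on $U_j$. Thus if $x_k\to x$ in $\mathbb{S}^{m-1}$ and $y\in U_j$ satisfies $|\phi^{\tau}(y)x|>0$, then $|\phi^{\tau}(y)x_k|>0$ for all large $k$, so $\liminf_{k}I_{\{|\phi^{\tau}(y)x_k|>0\}}\ge I_{\{|\phi^{\tau}(y)x|>0\}}$ for every such $y$. Fatou's lemma then gives $\liminf_k g_j(x_k)\ge g_j(x)$, i.e.\ $g_j$ is lower semicontinuous; the key point is that the \emph{strict} inequality $|\phi^{\tau}(y)x|>0$ is stable under small perturbations of $x$.

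The conclusion then follows by a routine covering argument. For each $x\in\mathbb{S}^{m-1}$ choose $j_x$ with $g_{j_x}(x)>0$ and a constant $c_x\in(0,g_{j_x}(x))$; by lower semicontinuity the set $V_x\triangleq\{x'\in\mathbb{S}^{m-1}:g_{j_x}(x')>c_x\}$ is relatively open and contains $x$. Compactness of $\mathbb{S}^{m-1}$ yields a finite subcover $V_{x_1},\dots,V_{x_N}$, and I put $j\triangleq\max\{j_{x_1},\dots,j_{x_N}\}$. For an arbitrary unit vector $x$ we have $x\in V_{x_k}$ for some $k$; since $j\ge j_{x_k}$ implies $U_j\supset U_{j_{x_k}}$, monotonicity gives $g_j(x)\ge g_{j_{x_k}}(x)>c_{x_k}\ge\min_{1\le l\le N}c_{x_l}>0$. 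Hence $\ell(\{y\in U_j:|\phi^{\tau}(y)x|>0\})>0$ for every unit vector $x$, which is the assertion of the lemma (in fact with a uniform positive lower bound, a form that will be convenient later).

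I expect no obstacle beyond the lower-semicontinuity step, and even that is short: it uses only the continuity of $\phi$ on $\prod_{i=1}^n E_i$ together with Fatou's lemma. Everything else is continuity of measure along an increasing sequence of sets and compactness of the sphere.
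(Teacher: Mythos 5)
Your proof is correct, but it takes a genuinely different route from the paper. The paper argues by contradiction: if the conclusion fails, then for each $j$ there is a unit vector $x^{j}$ for which the set $\lbrace y\in U_j:|\phi^{\tau}(y)x^{j}|>0\rbrace$ has measure zero; since $U_j$ is open and $\phi$ is continuous (Assumption A2(i)), this open set must be empty, so $\phi^{\tau}(y)x^{j}\equiv 0$ on $U_j$. Extracting a convergent subsequence $x^{n_i}\to x^{\infty}$ and using a single point $y^{*}\in U$ with $|\phi^{\tau}(y^{*})x^{\infty}|>0$ (which lies in $U_j$ for large $j$ since $U_j\uparrow U$) gives the contradiction. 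You instead argue directly: the functions $g_j(x)=\ell(\lbrace y\in U_j:|\phi^{\tau}(y)x|>0\rbrace)$ increase to $g(x)>0$ by continuity of measure from below, each $g_j$ is lower semicontinuous on the unit sphere by Fatou, and a finite subcover of the sphere produces one index $j$ that works uniformly. Both arguments hinge on compactness of the unit sphere, but the mechanisms differ: the paper needs continuity of $\phi$ in $y$ to upgrade ``measure zero'' to ``identically zero'' on the open set $U_j$, whereas your Fatou step only uses continuity of $x\mapsto\phi^{\tau}(y)x$ for each fixed $y$, so your argument works for merely measurable $\phi$ and in addition delivers a uniform positive lower bound $\min_{1\le l\le N}c_{x_l}$ for $\ell(\lbrace y\in U_j:|\phi^{\tau}(y)x|>0\rbrace)$ over all unit vectors $x$ — a slightly stronger conclusion than the lemma asserts. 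The paper's contradiction argument is shorter and avoids introducing the auxiliary functions $g_j$; yours is more self-contained in its hypotheses and quantitatively sharper. (One cosmetic remark: you credit the lower-semicontinuity step to the continuity of $\phi$ on $\prod_{i=1}^{n}E_i$, but as your own computation shows, only linearity in $x$ for fixed $y$ is used there.)
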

\begin{proof}
If the assertion is not true, then by the continuity of $\phi$ in  Assumption A2(i),  for each $j\geq 1$, there is a vector $x^{j}\in\mathbb{R}^{m}$ with $\|x^{j}\|=1$ such that
\begin{eqnarray}\label{p1}
\phi^{\tau}(y)x^{j}=0,\quad \forall y\in U_j.
\end{eqnarray}
It follows that there is a subsequence $\lbrace x^{n_{i}}\rbrace_{i\geq 1}$ of $\lbrace x^{j}\rbrace_{j\geq 1}$ satisfying
\begin{eqnarray}\label{p2}
\lim_{i\rightarrow+\infty}x^{n_{i}}=x^{\infty},
\end{eqnarray}
where $\|x^{\infty}\|=1$. On the other hand,
\begin{eqnarray*}
\ell(\lbrace  y\in U: |\phi^{\tau}(y)x^{\infty}|>0\rbrace)>0,
\end{eqnarray*}
so there is a $y^{*}\in U$ such that
\begin{equation}\label{p3}
|\phi^{\tau}(y^{*})x^{\infty}|>0.
\end{equation}
By \dref{uj}, there is an integer $m'\geq 1$ such that $y^{*}\in U_{j}$ for all $j\geq m'$, and hence \dref{p1}--\dref{p3} yield
\begin{equation*}
0<|\phi^{\tau}(y^{*})x^{\infty}|=\lim_{i\rightarrow+\infty}|\phi^{\tau}(y^{*})x^{n_i}|=0,
\end{equation*}
which leads to a contradiction.
\end{proof}

\begin{remark}\label{Eifinite}
Since every open $E_i\subset \mathbb{R}, i\in[1,n]$ is a countable union of disjoint open intervals,
 Lemma \ref{useful} implies that
  there is an open set $E_i'\subset E_i$ such that $E_i'$ consists of a finite number of disjoint open intervals and
\begin{eqnarray*}
\ell(\lbrace y\in \prod_{i=1}^n E_i': |\phi^{\tau}(y)x|>0\rbrace)>0,\quad \forall x\in \mathbb{R}^{m},~\|x\|=1.
\end{eqnarray*}
So, without loss of generality, assume  each $E_i$  in the sequel is a finite union of    disjoint open intervals.
\end{remark}

%In order to analyze the deviations between $f_{i1},\ldots,f_{im_i}$, $i\in[1,n]$,
Now, we introduce a series of operators. Denote $D$ as the differential operator, then for any sufficiently smooth
 functions $\lbrace g_l\rbrace_{l\geq 1}$, recursively define
\begin{eqnarray}\label{Lambdal}
\left\{
\begin{array}{l}
\Lambda_1(g_1)\triangleq{g_1}\\
\Lambda_{l+1}(g_1,\cdots,g_{l+1})\triangleq{\Lambda_l\left(\frac{D g_1}{D g_{l+1}},\cdots,\frac{Dg_l}{Dg_{l+1}}\right)},\quad l\geq 1
\end{array}.
\right.
\end{eqnarray}
These operators $\lbrace \Lambda_l\rbrace_{l\geq 1}$  have the following property:

\begin{lemma}\label{Lambda}
Let  functions $\lbrace g_i\rbrace_{i=1}^{l+1}$, $l\in \mathbb{N}^+$ be   sufficiently smooth, then
\begin{eqnarray}\label{amwy}
\Lambda_{l+1}(g_1,\ldots,g_{l+1})=\frac{D(\Lambda_{l}(g_1,g_3,\ldots,g_{l+1}))}{D(\Lambda_{l}(g_2,g_3,\ldots,g_{l+1}))}.
\end{eqnarray}
\end{lemma}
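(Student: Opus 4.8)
The plan is to prove Lemma~\ref{Lambda} by induction on $l$, using the recursive definition \dref{Lambdal} of the operators $\Lambda_l$. The base case $l=1$ is a direct computation: by definition $\Lambda_2(g_1,g_2)=\Lambda_1(Dg_1/Dg_2)=Dg_1/Dg_2$, while the right-hand side of \dref{amwy} reads $D(\Lambda_1(g_1))/D(\Lambda_1(g_2))=Dg_1/Dg_2$, so the two sides agree.

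For the inductive step, assume \dref{amwy} holds for some $l\geq 1$ and all sufficiently smooth families of $l+1$ functions. Given $g_1,\ldots,g_{l+2}$, I would start from the defining relation
\begin{eqnarray*}
\Lambda_{l+2}(g_1,\ldots,g_{l+2})=\Lambda_{l+1}\left(\frac{Dg_1}{Dg_{l+2}},\ldots,\frac{Dg_{l+1}}{Dg_{l+2}}\right),
\end{eqnarray*}
and then apply the induction hypothesis to the $(l+1)$-tuple $\left(Dg_1/Dg_{l+2},\ldots,Dg_{l+1}/Dg_{l+2}\right)$. This rewrites the right-hand side as
\begin{eqnarray*}
\frac{D\left(\Lambda_l\left(\frac{Dg_1}{Dg_{l+2}},\frac{Dg_3}{Dg_{l+2}},\ldots,\frac{Dg_{l+1}}{Dg_{l+2}}\right)\right)}{D\left(\Lambda_l\left(\frac{Dg_2}{Dg_{l+2}},\frac{Dg_3}{Dg_{l+2}},\ldots,\frac{Dg_{l+1}}{Dg_{l+2}}\right)\right)}.
\end{eqnarray*}
Now I would recognize each inner expression: by the definition \dref{Lambdal} applied in the ``other direction'', $\Lambda_l\left(\frac{Dg_1}{Dg_{l+2}},\frac{Dg_3}{Dg_{l+2}},\ldots,\frac{Dg_{l+1}}{Dg_{l+2}}\right)=\Lambda_{l+1}(g_1,g_3,\ldots,g_{l+1},g_{l+2})$ and similarly with $g_1$ replaced by $g_2$. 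Substituting these identifications yields exactly $D(\Lambda_{l+1}(g_1,g_3,\ldots,g_{l+2}))/D(\Lambda_{l+1}(g_2,g_3,\ldots,g_{l+2}))$, which is the claimed formula at level $l+1$.

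The main obstacle is bookkeeping: one must be careful that the recursive definition \dref{Lambdal} is being read in a consistent way — namely that for a tuple $(h_1,\ldots,h_{k+1})$ one has $\Lambda_{k+1}(h_1,\ldots,h_{k+1})=\Lambda_k(Dh_1/Dh_{k+1},\ldots,Dh_k/Dh_{k+1})$ — and that in the penultimate step the ``last'' slot used for differentiation is $g_{l+2}$ in both the numerator and denominator, which is what makes the two $\Lambda_l$ expressions collapse back into $\Lambda_{l+1}$ with the argument list reshuffled correctly. There is also an implicit smoothness/nonvanishing requirement (the derivatives $Dg_{l+2}$, $D(\Lambda_{l+1}(g_2,g_3,\ldots,g_{l+2}))$, etc., must be nonzero wherever the quotients are formed); I would either carry the phrase ``sufficiently smooth'' to absorb this, as the statement does, or restrict to the open set where all relevant denominators are nonzero, consistent with how these operators are later applied to the coordinate functions of $\phi^{(i)}$ on $E_i$. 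No hard analysis is involved; the content is purely the formal compatibility of the recursion with the quotient operation.
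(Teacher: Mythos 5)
Your proposal is correct and follows essentially the same route as the paper: induction on $l$, with the base case $\Lambda_2(g_1,g_2)=Dg_1/Dg_2$ verified directly, and the inductive step obtained by applying the induction hypothesis to the quotient tuple $\left(Dg_1/Dg_{l+2},\ldots,Dg_{l+1}/Dg_{l+2}\right)$ and then folding the inner $\Lambda_l$ expressions back into $\Lambda_{l+1}(g_1,g_3,\ldots,g_{l+2})$ and $\Lambda_{l+1}(g_2,g_3,\ldots,g_{l+2})$ via the defining recursion \dref{Lambdal}. The bookkeeping and the implicit nonvanishing caveat you mention are handled the same way (via ``sufficiently smooth'') in the paper.
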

\begin{proof}
We use the induction method to show this lemma. By the definition of $\Lambda_2$, it is easy to check
\begin{eqnarray}
\Lambda_{2}(g_1,g_2)=\Lambda_1\left(\dfrac{Dg_1}{Dg_2}\right)=\frac{Dg_1}{Dg_2}=\frac{D(\Lambda_{1}(g_1))}{D(\Lambda_1(g_2))}.\nonumber
\end{eqnarray}
Let $k\geq 2$.  Suppose  \dref{amwy} holds for any  functions $\lbrace g_i\rbrace_{i=1}^{l+1}$, $l=k-1$, then
\begin{eqnarray}
&&\Lambda_{k}\left(\frac{Dg_1}{Dg_{k+1}},\ldots,\frac{Dg_k}{Dg_{k+1}}\right)\nonumber\\
&=&\frac{D\left(\Lambda_{k-1}\left(\frac{Dg_1}{Dg_{k+1}},\frac{Dg_3}{Dg_{k+1}},\ldots,\frac{Dg_k}{Dg_{k+1}}\right)\right)}{D\left(\Lambda_{k-1}\left(\frac{Dg_2}{Dg_{k+1}},\frac{Dg_3}{Dg_{k+1}},\ldots,\frac{Dg_k}{Dg_{k+1}}\right)\right)},\nonumber
\end{eqnarray}
and hence by \dref{Lambdal},
\begin{eqnarray}
\Lambda_{k+1}(g_1,g_2,\ldots,g_{k+1})&=&\Lambda_{k}\left(\frac{Dg_1}{Dg_{k+1}},\ldots,\frac{Dg_k}{Dg_{k+1}}\right)\nonumber\\
&=& \frac{D\left(\Lambda_{k-1}\left(\frac{Dg_1}{Dg_{k+1}},\frac{Dg_3}{Dg_{k+1}},\ldots,\frac{Dg_k}{Dg_{k+1}}\right)\right)}{D\left(\Lambda_{k-1}\left(\frac{Dg_2}{Dg_{k+1}},\frac{Dg_3}{Dg_{k+1}},\ldots,\frac{Dg_k}{Dg_{k+1}}\right)\right)}\nonumber\\
&=& \frac{D\left(\Lambda_{k}\left(g_1,g_3,\ldots,g_{k+1}\right)\right)}{D\left(\Lambda_{k}\left(g_2,g_3,\ldots,g_{k+1}\right)\right)},\nonumber
\end{eqnarray}
which completes the induction.
\end{proof}

Before proceeding to the next lemma, we define some notations. Let $l_1<\cdots<l_s$ be
  $s$ positive integers.  For each $k\in [1, s]$,  denote $\mathcal{H}_k^{(l_1,\ldots,l_s)}$ as the $k$-permutations of $\lbrace l_1,\ldots,l_s\rbrace$. That is,
\begin{eqnarray}
\mathcal{H}_k^{(l_1,\ldots,l_s)}&\triangleq &{\lbrace(i_1,\ldots,i_k): i_j\in \lbrace l_1,\ldots,l_s\rbrace,1\leq j\leq k;i_r\neq i_j~\mbox{if}~r\neq j\rbrace}.\nonumber
\end{eqnarray}
Now, let $i\in[1,n]$. For each $(i_1,\ldots,i_k)\in \mathcal{H}_k^{(1,\ldots,m_i)}$, $k\in[1,m_i]$, define
\begin{eqnarray}\label{GammaL}
\Gamma_{(i_1,\ldots,i_k)}^{(i)}\triangleq{\Lambda_k\left(f_{ii_1},\ldots,f_{ii_k}\right)},\quad \bar\Gamma_{s}^{(i)}\triangleq{D\Gamma_{s}^{(i)}},
\end{eqnarray}
%Here, we do not emphasize the domain of the above functions and operators, but in later discussions, we will treat it carefully.
 and for any $s\in \mathcal{H}_i\triangleq{ \bigcup_{k=1}^{m_i} \mathcal{H}_k^{(1,\ldots,m_i)}}$,
\begin{equation*}
W_s(i)\triangleq\left\lbrace y: \bar\Gamma_{s}^{(i)}(y)~\mbox{is well-defined}\right\rbrace.
\end{equation*}
Given function $g$, denote $A(g)\triangleq{\lbrace x: g(x)=0\rbrace}$. In addition, for any two sets $\mathcal{X}_1,\mathcal{X}_2\subset \mathbb{R}$, we say that $\mathcal{X}_1$ is \emph{locally dense} in $\mathcal{X}_2$, if
$\mathcal{X}_1$ is not nowhere dense in $\mathcal{X}_2$. That is, there exists a nonempty open interval $\mathcal{X}_3\subset\mathcal{X}_2$ such that $\mathcal{X}_3\subset\overline{\mathcal{X}_1}$.
With the above pre-definitions, we assert

\begin{lemma}\label{ws}
Let integers $i\in[1,n]$, $k\in [2,m_i]$ and array $s^{*}\in\mathcal{H}_k^{(1,\ldots,m_i)}$. Under Assumption A2,   there is a set $H_{ik}\subset \bigcup_{j<k}{H}_j^{(1,\ldots,m_i)}$ such that
\begin{eqnarray}\label{wsc}
W_{s^{*}}^{c}(i)\cap E_i=\bigcup\nolimits_{s\in H_{ik}}(A(\bar\Gamma_{s}^{(i)})\cap E_i).
\end{eqnarray}
Moreover, let $U\subset E_i$ be a non-empty set with
\begin{eqnarray}\label{Uden}
U\subset \overline{W_{s^{*}}^{c}(i)}\quad \mbox{and}\quad \mbox{int}(W_{s^{*}}^{c}(i)\cap U)=\emptyset,
\end{eqnarray}
then we can find some  $j<k$ and $s'\in\mathcal{H}_j^{(1,\ldots,m_i)}$ such that $A(\bar\Gamma_{s'}^{(i)})$ is locally dense in $U$ and $\mbox{int}(A(\bar\Gamma_{s'}^{(i)})\cap U)=\emptyset$.
\end{lemma}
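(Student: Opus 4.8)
The plan is to establish the two assertions of the lemma separately: the identity \dref{wsc} by induction on the length $k$ of the array, with Lemma \ref{Lambda} as the workhorse, and then the local-density claim by a short point-set argument built on \dref{wsc}. Throughout, $i$ is the fixed integer of the statement, and we abbreviate $\Gamma_s=\Gamma_s^{(i)}$, $\bar\Gamma_s=\bar\Gamma_s^{(i)}$, $W_s=W_s(i)$, writing $|s|$ for the length of an array $s$.

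For the first part, I would prove by induction on $|s|$ the sharper statement that $W_s^c\cap E_i=\bigcup_{s'\in H_s}A(\bar\Gamma_{s'})\cap E_i$ for some finite $H_s\subset\bigcup_{j<|s|}\mathcal{H}_j^{(1,\ldots,m_i)}$ and, in addition, that $W_s\cap E_i$ is open in $E_i$; the lemma's $H_{ik}$ is then $H_{s^{*}}$. The base case $|s|=1$ is trivial: $\Gamma_{(i_1)}=f_{ii_1}$ and $\bar\Gamma_{(i_1)}=Df_{ii_1}$ are defined on all of $E_i$, so $W_{(i_1)}^c\cap E_i=\emptyset$. For the step, let $|s^{*}|=k\geq2$ with $s^{*}=(i_1,\ldots,i_k)$; Lemma \ref{Lambda} gives
\[
\Gamma_{s^{*}}=\Lambda_k\big(f_{ii_1},\ldots,f_{ii_k}\big)=\frac{D\big(\Lambda_{k-1}(f_{ii_1},f_{ii_3},\ldots,f_{ii_k})\big)}{D\big(\Lambda_{k-1}(f_{ii_2},f_{ii_3},\ldots,f_{ii_k})\big)}=\frac{\bar\Gamma_{s_1}}{\bar\Gamma_{s_2}},
\]
where $s_1=(i_1,i_3,\ldots,i_k)$ and $s_2=(i_2,i_3,\ldots,i_k)$ both belong to $\mathcal{H}_{k-1}^{(1,\ldots,m_i)}$. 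A count of derivatives gives $\Gamma_s\in C^{m_i-|s|+1}$ and $\bar\Gamma_s=D\Gamma_s\in C^{m_i-|s|}$ where defined; since $|s|\leq k\leq m_i$ and $f_{ij}\in C^{m_i}(E_i)$ by A2(i), these functions are continuous where defined, and those with $|s|\leq k-1$ are at least $C^{1}$. Applying the quotient rule to the display above and then once more to form $\bar\Gamma_{s^{*}}=D\Gamma_{s^{*}}$, one sees that the only way $\bar\Gamma_{s^{*}}$ can fail to be well-defined at a point of $E_i$ is that $\bar\Gamma_{s_2}$ vanishes there, or that $\bar\Gamma_{s_1}$ or $\bar\Gamma_{s_2}$ is itself not well-defined there. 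Combined with the inductive hypothesis this gives
\[
W_{s^{*}}^c\cap E_i=\big(A(\bar\Gamma_{s_2})\cup W_{s_1}^c\cup W_{s_2}^c\big)\cap E_i=\bigcup_{s'\in H_{s^{*}}}A(\bar\Gamma_{s'})\cap E_i,
\]
with $H_{s^{*}}=H_{s_1}\cup H_{s_2}\cup\{s_2\}\subset\bigcup_{j<k}\mathcal{H}_j^{(1,\ldots,m_i)}$, and openness of $W_{s^{*}}\cap E_i$ in $E_i$ follows from this description together with the continuity of $\bar\Gamma_{s_2}$ and the (inductive) openness of $W_{s_1}\cap E_i$ and $W_{s_2}\cap E_i$. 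One may in fact unravel \dref{Lambdal} directly to obtain the explicit choice $H_{ik}=\{(i_r,i_{r+1},\ldots,i_k):2\leq r\leq k\}$, but this is not needed.

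For the second part, \dref{wsc} together with $U\subset E_i$ yields $W_{s^{*}}^c\cap U=\bigcup_{s\in H_{ik}}\big(A(\bar\Gamma_s)\cap U\big)$, a finite union with $H_{ik}\subset\bigcup_{j<k}\mathcal{H}_j^{(1,\ldots,m_i)}$. The empty-interior requirement is then automatic: since $A(\bar\Gamma_s)\cap U\subset W_{s^{*}}^c\cap U$ for each $s\in H_{ik}$, the second half of \dref{Uden} forces $\mbox{int}(A(\bar\Gamma_s)\cap U)\subset\mbox{int}(W_{s^{*}}^c\cap U)=\emptyset$. Thus it suffices to exhibit one $s'\in H_{ik}$ for which $A(\bar\Gamma_{s'})$ is locally dense in $U$; such an $s'$ automatically lies in $\mathcal{H}_j^{(1,\ldots,m_i)}$ for some $j<k$, as demanded. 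We may assume $U$ is open (this is how $U$ arises in the construction of the intervals $S_i^j(q)$; in general one first replaces $U$ by its interior). Since $U\subset\overline{W_{s^{*}}^{c}(i)}$ and $U$ is open, every point of $U$ is a limit of points of $W_{s^{*}}^{c}(i)$ that are eventually in $U\subset E_i$, hence in $W_{s^{*}}^{c}(i)\cap E_i$; therefore
\[
U\subset\overline{W_{s^{*}}^{c}(i)\cap E_i}=\bigcup_{s\in H_{ik}}\overline{A(\bar\Gamma_s)\cap E_i}\subset\bigcup_{s\in H_{ik}}\overline{A(\bar\Gamma_s)}.
\]
Pick an open interval $I_0\subset U$. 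It is covered by the finitely many closed sets $\overline{A(\bar\Gamma_s)}$, $s\in H_{ik}$, so one of them, say $\overline{A(\bar\Gamma_{s'})}$, contains an open subinterval $I_1\subset I_0\subset U$ — an elementary fact, a special case of the Baire category theorem. Hence $A(\bar\Gamma_{s'})$ is locally dense in $U$, finishing the proof.

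The step I expect to be the main obstacle is the inductive identity for $W_{s^{*}}^c\cap E_i$ in the first part: pinning down precisely which points of $E_i$ make $\bar\Gamma_{s^{*}}^{(i)}=D\Gamma_{s^{*}}^{(i)}$ ill-defined. This means tracking, through Lemma \ref{Lambda} (or directly through \dref{Lambdal}), which shorter functions $\bar\Gamma_{s'}^{(i)}$ appear as denominators, checking that differentiating a quotient produces no new singular locus (only higher powers of the same denominators), and verifying that the $C^{m_i}$-regularity of A2(i) together with $k\leq m_i$ keeps enough derivatives available at every stage, so that ``well-defined'' really does reduce to ``no denominator vanishes''. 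The second part is comparatively routine, the only mild subtlety being the reduction to open $U$.
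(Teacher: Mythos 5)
Your proposal is correct and follows essentially the same route as the paper: you use Lemma \ref{Lambda} to peel $\Gamma_{s^{*}}^{(i)}$ into quotients of shorter $\bar\Gamma^{(i)}_{s}$'s so that $W_{s^{*}}^{c}(i)\cap E_i$ becomes a finite union of zero sets $A(\bar\Gamma_{s}^{(i)})$, and then conclude with the elementary fact that finitely many sets nowhere dense in $U$ cannot have closures covering an open subinterval of $U$. The only differences are cosmetic: you recurse on both numerator and denominator (the paper unravels a single chain, using that the two branches of Lemma \ref{Lambda} share the same domain of definition), and you phrase the last step as a Baire-type pigeonhole on a covering by closed sets instead of the paper's nested-interval contradiction, while both arguments implicitly use that $U$ contains an open interval, as it does in the lemma's application.
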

\begin{proof}
We first prove \dref{wsc}  for the given $i$ and $k$. Let $s_{k,1}=s^{*}$, then for each $j=k,\ldots,2$, Lemma \ref{Lambda} and \dref{GammaL} indicate that there exist some indices
$s_{j-1,1}, s_{j-1,2}\in\mathcal{H}_i$  such that
\begin{eqnarray}\label{gam}
\Gamma_{s_{j,1}}^{(i)}=\frac{\bar\Gamma_{s_{j-1,1}}^{(i)}}{\bar\Gamma_{s_{j-1,2}}^{(i)}}.
\end{eqnarray}
Denote $H_{ik}\triangleq   \{s_{j,2}, j=1, \ldots,k-1 \} $.

Note that by  \dref{Lambdal}, \dref{GammaL} and Assumption A2(i), it is easy to see
$$\lbrace y\in E_i: \Gamma_{s^{*}}^{(i)}(y)~\mbox{is well-defined}\rbrace=\lbrace y\in E_{i}: D\Gamma_{s^{*}}^{(i)}(y)~\mbox{is well-defined}\rbrace.$$
%\textcolor[rgb]{1.00,0.00,0.00}{In addition,.....
%}
In addition,  Lemma \ref{Lambda} infers that for each $j=1,\ldots,k$,
$$\lbrace y\in E_i: \Gamma_{s_{j-1,1}}^{(i)}(y)~\mbox{is well-defined}\rbrace=\lbrace y\in E_i: \Gamma_{s_{j-1,2}}^{(i)}(y)~\mbox{is well-defined}\rbrace.$$
Then, by \dref{gam},
\begin{eqnarray}
W_{s^{*}}^{c}(i)\cap E_i&= &\lbrace y\in E_i: \Gamma_{s^{*}}^{(i)}(y)~\mbox{is undefined}\rbrace\nonumber\\
&= &\lbrace y\in E_i: \bar\Gamma_{s_{k-1,1}}^{(i)}(y)~\mbox{is undefined}\rbrace\cup A(\bar\Gamma_{s_{k-1,2}}^{(i)})\nonumber\\
%&= &\bigcup_{s\in X_{k-1}}\left\lbrace y: \Gamma_{s}^{(i)}(y)~\mbox{is undefined}\right\rbrace\bigcup \bigcup_{s\in X_{k-1}}A(\bar\Gamma_{s}^{(i)})\nonumber\\
%&= & \bigcup_{s\in X_{k-2}}\left\lbrace y: \bar\Gamma_{s}^{(i)}(y)~\mbox{is undefined}\right\rbrace\bigcup \bigcup_{s\in X_{k-1}\cup X_{k-2}}A(\bar\Gamma_{s}^{(i)})\nonumber\\
& = &\cdots=\bigcup\nolimits_{s\in H_{ik}}(A(\bar\Gamma_{s}^{(i)})\cap E_i),\nonumber
\end{eqnarray}
which is exactly \dref{wsc}.
So, if \dref{Uden} holds,  for every $s\in H_{ik}$,
$\mbox{int}(A(\bar\Gamma_{s}^{(i)})\cap U)=\emptyset.$ Finally, we show that for some $s'\in H_{ik}$,  $A(\bar\Gamma_{s'}^{(i)})$ is locally dense in $U$. Otherwise, $A(\bar\Gamma_{s}^{(i)})$ is nowhere dense in $U$  for every $s\in  H_{ik}$. % Rewrite $H_{ik}=\{s_j, j=1,\ldots,k-1\}.$
This means there are  a series of nonempty open intervals $U_{1}\subset\cdots\subset U_{k-1}\subset U$ such that
$$U_{j}\cap \overline{ A(\bar\Gamma_{s_{l,2}}^{(i)})}=\emptyset \quad \mbox{for all } l=j,\ldots, k-1.$$
As a consequence,  by \dref{wsc},
\begin{eqnarray*}
U_{1}  \cap \overline{W_{s^{*}}^{c}(i)}=
U_{1}  \cap \left(\bigcup\nolimits_{j=1}^{k-1} \overline{ A(\bar\Gamma_{s_{j,2}}^{(i)})}\right)=\emptyset,
\end{eqnarray*}
which contradicts to \dref{Uden} due to $U_{1}\subset U$.
\end{proof}

Now,  we are ready to construct $\lbrace S_{i}^{j}(q)\rbrace_{j=1}^{p_i}$. For this, we classify the sets $\overline{A(\bar\Gamma_{s}^{(i)})}, s\in \mathcal{H}_i, i\in[1,n]$ into three types:
\begin{eqnarray}\label{z123}
\left\{
\begin{array}{l}
Z_{s}^{1}(i)=\mbox{int}( A(\bar\Gamma_{s}^{(i)}))\\
 Z_{s}^{2}(i)=d(A(\bar\Gamma_{s}^{(i)}))\backslash Z_{s}^{1}(i)\\
 Z_{s}^3(i)= A(\bar\Gamma_{s}^{(i)})\backslash d(A(\bar\Gamma_{s}^{(i)}))
\end{array},\quad i\in[1,n],
\right.
\end{eqnarray}
where $d(A)$ denotes the derived set of $A$.
Observe that $Z_{s}^{1}(i)$ can be expressed by a  countable union of disjoint open intervals and  $Z_{s}^3(i)$  is in fact the set of the isolated points of $A(\bar\Gamma_{s}^{(i)})$. Both the two sets have good topological properties. However, the structure of $Z_s^2(i)$ is not that clear. %except that it is a closed set.
 Therefore, we define the following sets to exclude $Z_{s}^2(i)$:
\begin{equation*}
S(i)\triangleq{E_i\Big\backslash\left(\bigcup\nolimits_{s\in\mathcal{H}_i}Z_{s}^{2}(i)\right)},\quad i\in[1,n],
\end{equation*}
which  are clearly some open sets.

The key idea of the construction of $\lbrace S_{i}^{j}(q)\rbrace_{j=1}^{p_i}$ is to find a proper subset of $S(i)$ for each $i\in [1,n]$. To begin with, we prove an important lemma.
\begin{lemma}\label{sw}
Under Assumption A2, for any unit vector $x\in\mathbb{R}^{m}$,
\begin{eqnarray}
\ell\left(\left\lbrace  y\in \prod_{i=1}^{n}S(i): |\phi^{\tau}(y)x|>0\right\rbrace\right)>0.
\end{eqnarray}
\end{lemma}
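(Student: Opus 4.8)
The plan is to reduce the statement for the product set $\prod_{i=1}^n S(i)$ to a one–variable version, coordinate by coordinate, using a Fubini-type slicing argument combined with the structural information about $S(i)$ that follows from Lemma \ref{ws}. Concretely, I would argue by contradiction: suppose there is a unit vector $x\in\mathbb{R}^m$ for which $\ell(\{y\in\prod_i S(i): |\phi^\tau(y)x|>0\})=0$, i.e.\ $\phi^\tau(y)x=0$ for $\ell$-a.e.\ $y\in\prod_i S(i)$. By the block structure \dref{sysphi}, $\phi^\tau(y)x=\sum_{i=1}^n \phi^{(i)\tau}(y_i)x^{(i)}$, where $x^{(i)}\in\mathbb{R}^{m_i}$ is the $i$-th block of $x$. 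Since at least one block $x^{(i)}$ is nonzero, the first move is to fix generic values of all coordinates except the $i$-th, so that the vanishing becomes a one–dimensional statement: $\phi^{(i)\tau}(z)x^{(i)} = c$ for a.e.\ $z\in S(i)$, where $c$ is a constant (absorbing the contributions of the other, now frozen, coordinates). Because $f_{ij}\in C(\mathbb{R})$ by A2(i), "a.e." on the open set $S(i)$ upgrades to "everywhere" on $S(i)$ (a continuous function vanishing a.e.\ on an open set vanishes identically there); this is the same trick used in Lemma \ref{useful}.

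The heart of the matter is then to show that $\sum_{j=1}^{m_i} x^{(i)}_j f_{ij} \equiv c$ on $S(i)$, with $x^{(i)}\neq 0$, is impossible — more precisely, that this would force a linear dependence that is incompatible with A2(ii). Here is where the operators $\Lambda_k$ and the sets $A(\bar\Gamma_s^{(i)})$ enter. On any interval inside $S(i)$ on which $\phi^{(i)}\in C^{m_i}$ and on which none of the relevant $\bar\Gamma_s^{(i)}$ vanish, repeatedly applying $D$ and dividing (this is exactly the recursion \dref{Lambdal} and the identity of Lemma \ref{Lambda}) reduces the affine relation $\sum_j x^{(i)}_j f_{ij} = c$ step by step, killing one coefficient at a time, until one reaches a relation of the form $\bar\Gamma^{(i)}_{s} \equiv 0$ on that interval for some $s$ — i.e.\ that interval lies in $A(\bar\Gamma^{(i)}_s)$. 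I would organize this as an induction on $k = $ the number of nonzero components of $x^{(i)}$ that remain "active": at each stage either some $\bar\Gamma_s^{(i)}$ vanishes on an open subinterval (so we are inside $\mathrm{int}(A(\bar\Gamma_s^{(i)})) = Z_s^1(i)$, which is allowed in $S(i)$, and we must push the argument into a smaller subinterval where it does not), or we may differentiate-and-divide and drop to $k-1$. The definition of $S(i)$ removed only the "bad boundary" part $Z^2_s(i)$; the open part $Z^1_s(i)$ and the isolated points $Z^3_s(i)$ remain, so one has enough room: on a small enough subinterval of $S(i)$ one can always arrange to be either entirely inside some $Z^1_s(i)$ or entirely outside $\overline{A(\bar\Gamma_s^{(i)})}$, away from the genuinely pathological $Z^2_s(i)$. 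Lemma \ref{ws} is the tool that makes this dichotomy precise and guarantees the descent terminates.

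The bookkeeping obstacle — and the step I expect to be hardest — is handling the interaction between the $n$ coordinates correctly. Freezing "generic" coordinates $y_1,\dots,\widehat{y_i},\dots,y_n$ only works for $\ell^{n-1}$-a.e.\ choice, and one must be sure that for such a choice the residual constant $c$ (which depends on the frozen coordinates) still leads to a contradiction uniformly; equivalently, one should run the slicing argument across \emph{all} blocks $i$ with $x^{(i)}\neq 0$ and combine them, rather than just one. A clean way is: if $\phi^\tau(y)x=0$ a.e.\ on $\prod_i S(i)$, then by Fubini, for a.e.\ point, varying only coordinate $i$ shows $\sum_j x^{(i)}_j f_{ij}$ is a.e.\ constant on $S(i)$ for \emph{every} $i$ simultaneously (with constants $c_i$ depending on the other coordinates but satisfying $\sum_i c_i \equiv 0$-type compatibility). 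Pick any $i$ with $x^{(i)} \ne 0$; the one-variable analysis above yields $\phi^{(i)\tau}(\cdot)x^{(i)}$ is constant on $S(i)$. But then, since $S(i)$ contains (up to the removed measure-zero-ish $Z_s^2$ sets) enough of $E_i$, this constancy propagates to contradict A2(ii), which asserts that for this very block one can find a point making $|\phi^\tau x|$ nonzero — and a short argument (differentiate the constant: its derivative is zero, feeding back into the $\Lambda$-recursion) turns "constant" into the same contradiction "$\bar\Gamma_s^{(i)}\equiv 0$ on an open set of $S(i)$ for all relevant $s$", which A2 forbids via the positivity in the Remark following A2. Assembling these pieces — the a.e.-to-everywhere upgrade, the $\Lambda$-descent of Lemma \ref{ws}, and the avoidance of $Z_s^2(i)$ built into $S(i)$ — gives the claimed positive measure.
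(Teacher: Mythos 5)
There is a genuine gap, and it lies exactly at the point you flag as the hardest: the passage from ``$\phi^{\tau}(y)x\equiv 0$ on $\prod_i S(i)$'' to a contradiction with A2(ii). Your slicing step is fine (and does not even need Fubini: once continuity upgrades a.e.\ to everywhere, freezing the other coordinates gives $\phi^{(i)\tau}(\cdot)x^{(i)}$ constant on each connected component of $S(i)$). But such constancy is \emph{not} in conflict with Assumption A2. If, say, $f_{i1}$ is constant on a subinterval $I\subset E_i$, then $I\subset \mbox{int}(A(\bar\Gamma_{(1)}^{(i)}))=Z_{(1)}^{1}(i)$, and $Z^{1}$-sets are deliberately \emph{kept} inside $S(i)$; so your terminal claim that the descent produces ``$\bar\Gamma_{s}^{(i)}\equiv 0$ on an open subset of $S(i)$, which A2 forbids'' is false --- A2(ii) only forbids $\phi^{\tau}(\cdot)x$ from vanishing identically on $\prod_i E_i$, it says nothing about the auxiliary functions $\bar\Gamma_s^{(i)}$ vanishing on open sets. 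Likewise, your parenthetical ``measure-zero-ish $Z_s^2$'' is unjustified: $Z_s^{2}(i)$ is a closed piece of the zero set of a $C^{m_i}$ function and can perfectly well have positive Lebesgue measure, so constancy on $S(i)$ cannot be propagated to $E_i$ by a measure argument.

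What is actually needed --- and what the paper's proof of this lemma is entirely devoted to --- is the topological fact that $\bigcup_{s\in\mathcal{H}_i}Z_{s}^{2}(i)$ contains no interval. Granting that, the argument is short: if $\phi^{\tau}x\equiv 0$ on $\prod_i S(i)$, A2(ii) supplies $y^{*}$ with $|\phi^{\tau}(y^{*})x|>0$, continuity gives a whole box on which $\phi^{\tau}x\neq 0$, that box must lie in $\prod_i E_i\setminus\prod_i S(i)$, and a projection argument forces some coordinate interval $V_i\subset\bigcup_{s}Z_{s}^{2}(i)$ --- impossible. The impossibility is proved in the paper by a minimal-counterexample argument: take the smallest $k$ and $s^{*}\in\mathcal{H}_k^{(1,\ldots,m_i)}$ with $Z_{s^{*}}^{2}(i)$ locally dense in $V_i$, show via \dref{VinW}--\dref{IntWem} that the hypotheses \dref{Uden} of Lemma \ref{ws} hold on a subinterval, and use Lemma \ref{ws} to produce a shorter array $s'$ whose $Z_{s'}^{2}(i)$ is again locally dense there, contradicting minimality. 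Your proposal uses Lemma \ref{ws} for a different purpose (a differentiate-and-divide descent on the constancy relation, which is closer in spirit to Lemma \ref{imp}, used later for a different lemma) and never establishes the empty-interior property; without it the proof does not close.
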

\begin{proof}
We show the lemma   in a way of  reduction to absurdity. Suppose there exists some $x\in\mathbb{R}^{m}$ with $\|x\|=1$ such that
\begin{eqnarray}\label{elly}
\ell\left(\left\lbrace y\in \prod_{i=1}^{n}S(i): |\phi^{\tau}(y)x|>0\right\rbrace\right)=0.
\end{eqnarray}
As $\phi(\cdot)$ is continuous on open set $\prod_{i=1}^{n}S(i)\subset\prod_{i=1}^{n} E_{i}$, then
\begin{equation}\label{phi0}
\phi^{\tau}(y)x=0,\quad \forall y\in \prod_{i=1}^{n}S(i)
\end{equation}
Note that Assumption A2(ii)  yields
\begin{eqnarray}
\ell\left(\left\lbrace y\in \prod_{i=1}^{n} E_{i}: |\phi^{\tau}(y)x|>0\right\rbrace\right)>0,\nonumber
\end{eqnarray}
which together with \dref{elly} implies
\begin{equation*}
\ell\left(\left\lbrace y\in \prod_{i=1}^{n} E_{i}\backslash\prod_{i=1}^{n}S(i): |\phi^{\tau}(y)x|>0\right\rbrace\right)>0.
\end{equation*}
Consequently, there is a $y^{*}=(y_{1}^{*},\ldots,y_{n}^{*})\in \prod_{i=1}^{n} E_{i}\backslash\prod_{i=1}^{n}S(i)$ such that $|\phi^{\tau}(y^{*})x|>0$. By the continuity of $\phi(\cdot)$ on $\prod_{i=1}^{n} E_{i}$, there is  $\varepsilon>0$ such that
\begin{eqnarray}\label{phiyx}
|\phi^{\tau}(y)x|>0,\quad \forall y\in\prod_{i=1}^{n}(y_i^{*}-\varepsilon,y_i^{*}+\varepsilon)\subset \prod_{i=1}^{n} E_{i}.
\end{eqnarray}
On account of \dref{phi0} and \dref{phiyx}, we deduce
\begin{eqnarray*}
\prod_{i=1}^{n}(y_i^{*}-\varepsilon,y_i^{*}+\varepsilon)&\subset& \prod_{i=1}^{n} E_{i}\backslash\prod_{i=1}^{n}S(i)\nonumber\\
&=&\bigcup_{i=1}^n\left(\prod_{j=1}^{i-1} E_{j} \times \left(\bigcup_{s\in \mathcal{H}_i}Z_{s}^{2}(i)\right)\times  \prod_{j=i+1}^{n} E_{j}\right),
\end{eqnarray*}
which immediately yields that for some index $i\in [1,n]$,
\begin{eqnarray}\label{yvy}
V_i\triangleq(y_i^{*}-\varepsilon,y_i^{*}+\varepsilon)\subset\bigcup\nolimits_{s\in \mathcal{H}_i}Z_{s}^{2}(i).
\end{eqnarray}

Next, we  show  \dref{yvy} is impossible. To this end, note that $Z_{s}^{2}(i)$ is closed for each $s\in \mathcal{H}_i$, and hence \dref{yvy} implies that there is an integer $k\in [1,m_i]$ and
an array $s^*\in  \mathcal{H}_k^{(1,\ldots,m_i)}$ such that  $Z_{s^*}^{2}(i)$ is locally dense in $V_i$.
Let $k$ be the smallest integer for such $s^*$.

Now, fix the above $i\in [1,n], k\in [1,m_i]$ and $s^*\in  \mathcal{H}_k^{(1,\ldots,m_i)}$.
Since
%$\bar\Gamma_{s^*}^{(i)}$ is continuous in $W_{s^*}(i)$ and
$Z_{s^*}^{2}(i)$ is locally dense in $V_i$, there is  an open interval  $V_i'\subset V_i$ such that $Z_{s^*}^{2}(i)$ is dense in $V_i'$. Moreover,  $Z_{s^*}^{2}(i)$ is closed,  so  $V_{i}'\subset Z_{s^*}^2(i)$ and thus $V_{i}^{'}\cap Z_{s^*}^1(i)=\emptyset$.  In addition, $\bar\Gamma_{s^*}^{(i)}$ is continuous in $W_{s^*}(i)\cap E_i$, by \dref{z123},  $W_{s^{*}}(i)\cap Z_{s^{*}}^2(i)\cap E_i  \subset A(\bar\Gamma_{s^{*}}^{(i)})$. Consequently,
%\quad\mbox{and}\quad
\begin{eqnarray}\label{xrc}
\mbox{int}\left(W_{s^{*}}(i)\cap Z_{s^{*}}^2(i)\cap E_i\right)    \subset \mbox{int}(A(\bar\Gamma_{s^{*}}^{(i)})\cap E_{i})   = Z_{s^{*}}^{1}(i)\cap E_{i}.
\end{eqnarray}
Moreover, $V_i'$ is an open interval belongs to $E_i$ and $V_i'\cap (Z_{s^*}^2(i))^c=\emptyset$, then
\begin{eqnarray}\label{VinW}
V_i'=V_i'\backslash Z_{s^*}^1(i)&\subset & V_i'\backslash\mbox{int}\left(W_{s^*}(i)\cap Z_{s^*}^2(i)\right)\nonumber\\
&\subset&\overline{V_i'\backslash(W_{s^*}(i)\cap Z_{s^*}^2(i))}=\overline{V_i'\cap W_{s^*}^{c}(i)}\subset\overline{W_{s^*}^{c}(i)}.
\end{eqnarray}
Note that $\bar\Gamma_{s}^{(i)}$ are  well defined in $\mathbb{R}$ for  all $s\in  \mathcal{H}_1^{(1,\ldots,m_i)}$ by Assumption A2(i), which shows
$W_{s}^{c}(i)\cap E_i=\emptyset$. %for any $s\in \mathcal{H}_1^{(1,\ldots,m_i)}$,
Then, \dref{VinW} implies $k\geq2$. Furthermore, by $Z_{s^{*}}^{2}(i)\subset\overline{A(\bar\Gamma_{s^{*}}^{(i)})}$, it yields
\begin{eqnarray}\label{IntWem}
\mbox{int}\left(W_{s^*}^{c}(i)\cap V_i'\right)&\subset &\mbox{int}\left(A^{c}(\bar\Gamma_{s^*}^{(i)})\cap V_i'\right)=\left(\overline{(A^{c}(\bar\Gamma_{s^*}^{(i)})\cap V_i')^{c}}\right)^{c}\nonumber\\
&=& \left(\overline{A(\bar\Gamma_{s^*}^{(i)})}\cup(V_i')^{c}\right)^{c}\subset \left(Z_{s^*}^{2}(i)\cup(V_i')^{c}\right)^{c}=\emptyset.
\end{eqnarray}
Applying Lemma \ref{ws}, \dref{VinW} and \dref{IntWem} indicate that
we can find some $j<k$ and $s'\in\mathcal{H}_j^{(1,\ldots,m_i)}$ such that $A(\bar\Gamma_{s'}^{(i)})$ is locally dense in $V_i'$ and $\mbox{int}(A(\bar\Gamma_{s'}^{(i)})\cap V_i')=\emptyset$. So, there is an open interval $V''_i\subset V'_i$ such that $V''_i\subset d(A(\bar\Gamma_{s'}^{(i)}))$ and  $\mbox{int}(A(\bar\Gamma_{s'}^{(i)}))\cap V''_i=\mbox{int}(A(\bar\Gamma_{s'}^{(i)})\cap V_i'')=\emptyset$, and then $V''_i\subset Z_{s'}^{2}(i)$. That is, $Z_{s'}^2(i)$ is locally dense in  $V_i'$, which derives a contradiction to the definition of $k$.
This completes the proof of Lemma \ref{sw}.
\end{proof}

Next,  we consider a series of open sets $\lbrace S(i)\cap (-j,j)\rbrace_{j\geq 1}$ for $i=1,\ldots,n$. Clearly, $S(i)\cap (-j,j)\subset S(i)\cap (-(j+1),(j+1))$ and $\lim_{j\rightarrow+\infty}S(i)\cap (-j,j)=S(i)$. Then, by using Lemmas \ref{useful} and \ref{sw}, there is an integer $d\geq 1$ such that for any unit $x\in\mathbb{R}^{m}$,
 \begin{eqnarray}\label{ld>0}
\ell\left(\left\lbrace y\in \prod\nolimits_{i=1}^{n}(S(i)\cap(-d,d)): |\phi^{\tau}(y)x|>0\right\rbrace\right)>0.
\end{eqnarray}
Since $S(i)$ is open, for each integer $i\in[1,n]$, there exists some disjoint open intervals $\lbrace S_i^{j}\rbrace_{j\in\Theta_i}$, where
$\Theta_i=\lbrace 1,\ldots,k_i\rbrace$ ($k_i$ can be taken infinite),  such that $S(i)\cap(-d,d)=\bigcup_{j\in\Theta_i}S_i^{j}$. Write $S_{i}^{j}= (c_i^{j},d_i^{j})$ and denote
\begin{eqnarray}\label{Sijq}
S_{i}^{j}(q)\triangleq \left(c_i^j+\frac{d_i^j-c_i^j}{q+2},d_i^j-\frac{d_i^j-c_i^j}{q+2}\right),\quad   j\in\Theta_i,~q\in \mathbb{N}^+.
\end{eqnarray}
Given \dref{ld>0}, the following lemma is natural.
\begin{lemma}\label{qq}
If \dref{ld>0} holds, then  there exist some integers $p_1,\ldots,p_n$ and $q\geq 1$ such that for any unit $x\in\mathbb{R}^{m}$,
 \begin{eqnarray}\label{ely}
\ell\left(\left\lbrace y\in \mathcal{S}: |\phi^{\tau}(y)x|>0\right\rbrace\right)>0\quad \mbox{and}\quad \overline{\mathcal{S}}\subset \prod_{i=1}^{n} E_i,
\end{eqnarray}
where $\mathcal{S}\triangleq \prod\nolimits_{i=1}^{n}\bigcup\nolimits_{j=1}^{p_i}S_{i}^{j}(q)$.
\end{lemma}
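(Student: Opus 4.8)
The plan is to deduce \dref{ely} from \dref{ld>0} by an approximation-and-compactness argument, exploiting the fact that the sets $S_i^j(q)$ exhaust $\bigcup_{j\in\Theta_i}S_i^j = S(i)\cap(-d,d)$ as $q\to\infty$ and as we take more intervals. First I would observe that for fixed $i$, the intervals $\{S_i^j(q)\}_{j,q}$ are all compactly contained in $S(i)\cap(-d,d)\subset E_i$, so the inclusion $\overline{\mathcal{S}}\subset\prod_{i=1}^n E_i$ in \dref{ely} is immediate for any choice of $p_i$ and $q$; the whole content of the lemma is the positive-measure statement. Introduce the increasing family of open sets
$$
U_{p,q}\triangleq \prod_{i=1}^{n}\bigcup_{j=1}^{p}S_i^j(q),
$$
which satisfies $U_{p,q}\subset U_{p',q'}$ when $p\le p'$, $q\le q'$, and $\lim_{p,q\to\infty}U_{p,q}=\prod_{i=1}^n\bigl(S(i)\cap(-d,d)\bigr)$ (here one uses $\lim_{q\to\infty}S_i^j(q)=S_i^j$ and $k_i\ge p\to\infty$; if some $k_i$ is finite just fix $p_i=k_i$). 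Then \dref{ld>0} says exactly that the limiting open set has the property that $\ell(\{y:|\phi^\tau(y)x|>0\})>0$ for every unit $x$.

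The key step is then to apply Lemma \ref{useful} (or rather the argument behind it) to this nested family. Strictly speaking Lemma \ref{useful} is stated for a sequence indexed by a single integer $j$, so I would first collapse the double index by setting $V_\ell\triangleq U_{\ell,\ell}$, a single increasing sequence of open subsets of $\prod_{i=1}^n E_i$ with $\lim_\ell V_\ell=\prod_{i=1}^n(S(i)\cap(-d,d))\triangleq U$. By \dref{ld>0}, $U$ satisfies the hypothesis of Lemma \ref{useful}, so there is an integer $\ell$ with $\ell(\{y\in V_\ell:|\phi^\tau(y)x|>0\})>0$ for every unit $x\in\mathbb{R}^m$ simultaneously. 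Taking $q\triangleq\ell$ and $p_i\triangleq\min\{\ell,k_i\}$ yields $\mathcal{S}=V_\ell$ and hence the first assertion of \dref{ely}.

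The main obstacle — and it is a mild one — is bookkeeping the uniformity over unit vectors $x$: we need a \emph{single} $(p_1,\ldots,p_n,q)$ that works for all $x$ at once, not one depending on $x$. This is precisely why Lemma \ref{useful} is phrased with the quantifier "$\forall x$, $\|x\|=1$" inside the conclusion rather than outside, and its proof handles this through the compactness of the unit sphere together with the continuity of $\phi$ from Assumption A2(i); so invoking Lemma \ref{useful} as a black box resolves the issue. A secondary point to check carefully is that the truncated intervals $S_i^j(q)$ in \dref{Sijq} indeed increase to $S_i^j$ and remain compactly inside $S_i^j$, which is clear from the explicit formula since $c_i^j+\frac{d_i^j-c_i^j}{q+2}\downarrow c_i^j$ and $d_i^j-\frac{d_i^j-c_i^j}{q+2}\uparrow d_i^j$ as $q\to\infty$, and both endpoints stay strictly inside $(c_i^j,d_i^j)$ for every finite $q$. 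This gives $\overline{S_i^j(q)}\subset S_i^j\subset S(i)\subset E_i$, hence $\overline{\mathcal{S}}\subset\prod_{i=1}^n E_i$, completing the proof.
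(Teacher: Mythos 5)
Your proposal is correct and follows essentially the same route as the paper: both exhaust $\prod_{i=1}^{n}\bigl(S(i)\cap(-d,d)\bigr)$ by the increasing truncated sets (using a diagonal sequence when $\Theta_i$ is infinite) and then invoke Lemma \ref{useful} together with \dref{ld>0} to get a single $(p_1,\ldots,p_n,q)$ uniform over all unit $x$. Your explicit check that $\overline{S_i^j(q)}\subset S_i^j\subset E_i$, giving $\overline{\mathcal{S}}\subset\prod_{i=1}^{n}E_i$, is a detail the paper leaves implicit but is exactly right.
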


\begin{proof}
Let $i\in[1,n]$. It is obvious that $\bigcup_{j\in\Theta_i}S_i^{j}(q)\subset \bigcup_{j\in\Theta_i}S_i^{j}(q+1)$, $q\in \mathbb{N}^+$.
If  $|\Theta_i|<+\infty$, then
\begin{equation*}
\lim_{q\rightarrow+\infty}\bigcup\nolimits_{j\in\Theta_i}S_i^{j}(q)=S(i)\cap(-d,d).
\end{equation*}
As for  the case where $\Theta_i=\mathbb{N}^{+}$, it infers
\begin{eqnarray*}
\lim_{k\rightarrow+\infty} \bigcup\nolimits_{j=1}^{k}S_i^{j}(k)=S(i)\cap(-d,d).
\end{eqnarray*}
So, in view of the above two cases, by \dref{ld>0} and  Lemma \ref{useful}, there are some integers $p_1,\ldots,p_n$ and $q\geq 1$ such that \dref{ely} holds.
\end{proof}

\subsubsection{Selection of $U_x$}
With the foregoing preliminaries in place, we can set out to construct $U_x$.
First, for every $x\in\mathbb{R}^{m}$ with $\|x\|=1$, define
\begin{equation*}
U_x(\delta)\triangleq \lbrace y: |\phi^{\tau}(y)x|> \delta\rbrace\cap \mathcal{S},\quad \delta>0.
\end{equation*}
The remaining task is to take a proper $\delta>0$ such that $U_x=U_x(\delta)$ meet our requirement. To this end,
let  $\lbrace d_{k}\rbrace_{k=1}^{2n}$ be a sequence  of numbers and for $k\in [n+1,2n]$,
define
\begin{eqnarray}\label{vard}
\varsigma_{k}\triangleq d_{k}-x^{\tau}\phi(d_{k-1},\ldots,d_{k-n}),\quad x\in\mathbb{R}^m.
\end{eqnarray}
Denote  $y=(d_{n},\ldots,d_{1})^{\tau}$ and
$\varsigma=(\varsigma_{2n},\ldots,\varsigma_{n+1})^{\tau}$. Evidently, \dref{vard} implies that there is a function $g: \mathbb{R}^{2n+m}\rightarrow \mathbb{R}^n$ such that
\begin{eqnarray}\label{d=g}
(d_{2n},\ldots,d_{n+1})^{\tau}=g(\varsigma, y,x).
\end{eqnarray}
%Moreover, let $y=(d_{n},\ldots,d_{1})^{\tau}$, vector $(d_{2n},\ldots,d_{n+1})^{\tau}$ can be regarded as a vector-valued function of $\varsigma$, $y$ and $\theta$, i.e,  $(d_{2n},\ldots,d_{n+1})^{\tau}=g(\varsigma, y,\theta)$.
We choose $\delta$  according to  the lemma below.

\begin{lemma}\label{inf>0}
Under  Assumption A2, the following two statements hold:\\
(i) given $y\in\mathbb{R}^{n}$, $x\in\mathbb{R}^{m}$ and
  a box $O=\prod_{i=1}^{n}I_{i}$ with  $\lbrace I_i\rbrace_{i=1}^{n}$ being some intervals, then
\begin{eqnarray}\label{lO}
\ell(\lbrace \varsigma: g(\varsigma,y,x)\in O\rbrace)=\ell(O);
\end{eqnarray}
%(ii) let  $O'\in\mathbb{R}^{n}$ be a compact set and   $\lbrace y(i)\in \mathbb{R}^{n}, \theta(i)  \in \mathbb{R}^{m},  z(i)  \in \mathbb{R}^{m} \rbrace_{i\geq 1}$   be a sequence % belonging to $O_1$ and $O_3$, respectively. which satisfy
%that $\lim_{i\rightarrow\infty}\|y(i)-y\|=0$, $\lim_{i\rightarrow\infty}\|\theta(i)-\theta\|=0$, $\lim_{i\rightarrow\infty}\|z(i)-z\|=0$ for some $\mbox{col}\lbrace y,\theta,z\rbrace\in \mathbb{R}^{2m+n}$, then
%\begin{eqnarray}\label{sl1}
%\qquad
%\left\{
%\begin{array}{l}
%\lim\limits_{i\rightarrow\infty}\sup\limits_{\varsigma\in O'}\|g(\varsigma, y,\theta)-g(\varsigma, y(i),\theta(i))\|=0\\
%\lim\limits_{i\rightarrow\infty}\sup\limits_{\varsigma\in O'}\|\phi^{\tau}(g(\varsigma,y,\theta))z-\phi^{\tau}(g(\varsigma,y(i),\theta(i)))z(i)\|=0
%\end{array};
%\right.
%\end{eqnarray}
(ii) for any constants $M,K>0$, there is  a $\delta^*>0$ such that
\begin{equation}\label{3i}
\inf_{\|z\|=1, \|y\|\leq M, \|x\|\leq K}\ell\left(\lbrace \varsigma: |\phi^{\tau}(g(\varsigma,y,x))z|> \delta^{*}, g(\varsigma,y,x)\in \mathcal{S}\rbrace\right)>0.
\end{equation}
\end{lemma}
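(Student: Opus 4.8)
## Proof Proposal for Lemma \ref{inf>0}

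The plan is to treat the two statements separately, with (i) being essentially a change-of-variables computation that feeds into (ii). For part (i), observe from \dref{vard} that the map $\varsigma \mapsto (d_{2n},\ldots,d_{n+1})$ given by \dref{d=g} has a triangular structure: once $y=(d_n,\ldots,d_1)$ and $x$ are fixed, $\varsigma_{n+1}$ determines $d_{n+1}$ affinely (indeed $d_{n+1} = \varsigma_{n+1} + x^\tau\phi(d_n,\ldots,d_1)$), then $\varsigma_{n+2}$ together with the already-determined $d_{n+1}$ determines $d_{n+2}$, and so on. Hence the Jacobian of $g(\cdot,y,x)$ with respect to $\varsigma$ is lower triangular with unit diagonal entries, so $|\det Dg| \equiv 1$. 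Therefore $\varsigma \mapsto g(\varsigma,y,x)$ is a volume-preserving bijection of $\mathbb{R}^n$ onto $\mathbb{R}^n$, and \dref{lO} follows immediately, since the preimage of any box $O$ has Lebesgue measure $\ell(O)$. I would spell out the triangular recursion explicitly but not belabor the determinant computation.

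For part (ii), the idea is to combine (i) with Lemma \ref{qq} via a compactness argument. Fix $M, K > 0$. For each unit vector $z$, each $\|y\| \le M$ and each $\|x\| \le K$, part (i) and Lemma \ref{qq} give
\begin{eqnarray*}
\ell\left(\lbrace \varsigma : g(\varsigma,y,x)\in \mathcal{S}\rbrace\right) = \ell(\mathcal{S}) > 0,
\end{eqnarray*}
and moreover $\ell(\lbrace w \in \mathcal{S} : |\phi^\tau(w)z| > 0\rbrace) > 0$ by \dref{ely}; pulling this back through the measure-preserving map $g(\cdot,y,x)$, the set $\lbrace \varsigma : |\phi^\tau(g(\varsigma,y,x))z| > 0,\ g(\varsigma,y,x)\in\mathcal{S}\rbrace$ has positive measure. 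Writing this positive-measure set as the increasing union over $k\ge 1$ of $\lbrace \varsigma : |\phi^\tau(g(\varsigma,y,x))z| > 1/k,\ g(\varsigma,y,x)\in\mathcal{S}\rbrace$, continuity from below yields, for each $(z,y,x)$, a threshold $\delta = \delta(z,y,x) > 0$ with positive measure. To get a \emph{uniform} $\delta^*$ over the compact set $\lbrace\|z\|=1\rbrace \times \lbrace\|y\|\le M\rbrace \times \lbrace\|x\|\le K\rbrace$, I would argue by contradiction: if no such $\delta^*$ works, pick sequences $\delta_j \downarrow 0$ and triples $(z_j,y_j,x_j)$ with $\ell(\lbrace \varsigma : |\phi^\tau(g(\varsigma,y_j,x_j))z_j| > \delta_j,\ g(\varsigma,y_j,x_j)\in\mathcal{S}\rbrace) \to 0$ (after passing to a subsequence the measures converge, and if the limit were positive we could replace $\delta_j$ by a fixed small value), extract a convergent subsequence $(z_j,y_j,x_j)\to(z^\infty,y^\infty,x^\infty)$ with $\|z^\infty\|=1$, $\|y^\infty\|\le M$, $\|x^\infty\|\le K$, and derive a contradiction with the positive lower bound at the limit point. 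The continuity input here is that $\phi$ is continuous on $\overline{\mathcal{S}} \subset \prod_i E_i$ (Lemma \ref{qq}) and that $g$ is continuous in all its arguments, so that $|\phi^\tau(g(\varsigma,y,x))z|$ is jointly continuous; combined with dominated convergence on the fixed domain $\lbrace\varsigma : g(\varsigma,y,x)\in\mathcal{S}\rbrace$ (whose measure is the constant $\ell(\mathcal{S})$ by part (i)), one upgrades pointwise convergence of the indicators to convergence of the measures, contradicting $\ell(\lbrace \varsigma : |\phi^\tau(g(\varsigma,y^\infty,x^\infty))z^\infty| > 0,\ g(\varsigma,y^\infty,x^\infty)\in\mathcal{S}\rbrace) > 0$.

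The main obstacle I anticipate is the measure-convergence/compactness step in part (ii): one must be careful that the domain $\lbrace \varsigma : g(\varsigma,y,x) \in \mathcal{S}\rbrace$ varies with $(y,x)$, so the naive "continuity of measures under parameter convergence" is not automatic. The clean way around this is precisely part (i): since $g(\cdot,y,x)$ is measure-preserving, one can transport every integral back to the fixed reference domain $\mathcal{S}$ — writing $\ell(\lbrace \varsigma : |\phi^\tau(g(\varsigma,y,x))z| > \delta,\ g(\varsigma,y,x)\in\mathcal{S}\rbrace) = \ell(\lbrace w \in \mathcal{S} : |\phi^\tau(w)z| > \delta\rbrace)$ — which removes the $(y,x)$-dependence from the domain entirely and even shows the quantity in \dref{3i} does not actually depend on $y$ or $x$ at all. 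With that observation, \dref{3i} reduces to showing $\inf_{\|z\|=1}\ell(\lbrace w\in\mathcal{S} : |\phi^\tau(w)z| > \delta^*\rbrace) > 0$ for suitable $\delta^*$, which is a direct consequence of \dref{ely} and the compactness of the unit sphere via the same continuity-from-below plus contradiction argument, now with only the single parameter $z$ to track.
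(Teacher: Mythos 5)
Your proposal is correct in substance but follows a genuinely different, and in fact shorter, route than the paper. The paper proves (ii) by a compactness/contradiction argument over the full triple $(z,y,x)$ in $\overline{B(0,1)}\times\overline{B(0,M)}\times\overline{B(0,K)}$: it extracts a convergent subsequence, shows the limiting set has positive measure via Lemma \ref{qq}, and then transfers this back to the approximating triples using inner approximations $\mathcal{S}_k$ of $\mathcal{S}$ together with uniform convergence of $g(\cdot,y(k_r),x(k_r))$ and $\phi^{\tau}(g(\cdot,y(k_r),x(k_r)))z(k_r)$ on a compact $\varsigma$-set. You instead exploit part (i) fully: since $\varsigma\mapsto g(\varsigma,y,x)$ is a bijection whose box-preimages have the right measure, the preimage measure coincides with Lebesgue measure on all Borel sets, and the set in \dref{3i} is exactly the $g$-preimage of the open set $\lbrace w\in\mathcal{S}:|\phi^{\tau}(w)z|>\delta^{*}\rbrace$; hence the quantity is independent of $(y,x)$ and \dref{3i} collapses to a compactness argument over the unit sphere alone, using \dref{ely} and the boundedness of $\phi$ on $\overline{\mathcal{S}}$. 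This is essentially the argument the paper itself uses in the remark following Lemma \ref{inf>0} for the case $n=1$ under A2$'$, and you have observed that it extends to general $n$ because the triangular structure of \dref{vard} makes $g$ measure-preserving; what the paper's longer argument buys is that it never needs the extension of (i) from boxes to general Borel sets, while your argument buys a cleaner proof in which the infimum over $y$ and $x$ is vacuous.

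Two points need repair, though neither is fatal. First, your justification of (i) via ``the Jacobian of $g(\cdot,y,x)$ is lower triangular with unit diagonal'' is not available under A2, which only gives $f_{ij}\in C(\mathbb{R})$ (smoothness holds only on $E_i$), so $g$ need not be differentiable in $\varsigma$; the correct justification is the one your first sentence already contains, namely the paper's Fubini argument: for fixed $\varsigma_{n+1},\dots,\varsigma_{k-1}$ the section in $\varsigma_k$ of the preimage of a box is a translate of $I_k$, so the iterated integral gives $\prod_k|I_k|$. Second, to transport the set in \dref{3i} you need measure preservation beyond boxes; this follows by the standard uniqueness-of-measure (or countable-disjoint-union-of-boxes) step applied to the Borel measure $A\mapsto\ell(g^{-1}(A))$, noting that the relevant set is open because $\phi$ and $g$ are continuous and $\mathcal{S}$ is open. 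With those two steps made explicit, your sphere-only contradiction argument (if no $\delta^{*}$ works, take unit vectors $z(k)$ with $\ell(\lbrace w\in\mathcal{S}:|\phi^{\tau}(w)z(k)|>1/k\rbrace)<1/k$, pass to $z(k_r)\to z^{*}$, and use $\sup_{w\in\overline{\mathcal{S}}}\|\phi(w)\|<\infty$ to contradict \dref{ely} at $z^{*}$) closes the proof.
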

\begin{proof}
(i) Note that in view of \dref{vard},
$
d_k=\varsigma_{k}+o_{k-1},  k=n+1,\ldots, 2n,
$
where $o_{k-1}\in \mathbb{R}$ is a point determined by   $\varsigma_{k-1}$,  $y$ and $x$ (for $k=n+1$, $\varsigma_n$ does not exist and $o_n$ depends  only on $y$ and $x$). So,
$\{\varsigma: \varsigma+o_{k-1}\in I_k\}=I_k-o_{k-1}$
is an interval with length $|I_k|$. By the definition of the	Lebesgue measure in $\mathbb{R}^n$, it is straightforward that
$$\ell(\lbrace \varsigma: g(\varsigma,y,x)\in O\rbrace)=\prod\nolimits_{k=1}^n |I_k|=\ell(O).$$
%$\varsigma=(d_{2n},\ldots,d_{n+1})^{\tau}-o_{i-1}$ with  So, \dref{lO} follows immediately from \cite[Theorem 12.1]{Patrick}.
\\
%(ii) According to \dref{vard} and \dref{d=g},
% $g(\varsigma, y,\theta)$ is  a continuous  function, and hence the first formula of  \dref{sl1} holds. Moreover, since $\phi(\cdot)$ is  also continuous, the second formula of \dref{sl1} is true  as well.
% \\
(ii) Arguing by contradiction, we assume that \dref{3i} is false.
Then,  for each integer $k\geq 1$, there exists some point $(z(k),  y(k), x(k))  % \in\mathbb{R}^{m}\times\mathbb{R}^{n}\times  \mathbb{R}^{m}$
$  falling in a compact set $\overline{B(0,1)}\times \overline{B(0,M)} \times \overline{B(0,K)}  \subset \mathbb{R}^{m}\times \mathbb{R}^{n}\times \mathbb{R}^{m}$ with $\|z(k)\|=1$ such that
\begin{equation}\label{l1k}
\ell(\lbrace \varsigma: |\phi^{\tau}(g(\varsigma,y(k),x(k)))z(k)|>\frac{1}{k}, g(\varsigma,y(k),x(k))\in \mathcal{S}\rbrace)<\frac{1}{k}.
\end{equation}
%Since all points  $\lbrace z(k),  y(k), \theta(k)\rbrace_{k\geq 1}$  fall ,
This sequence of points thus   has a subsequence $\lbrace z(k_r),  y(k_r), x(k_r)\rbrace_{r\geq 1}$  and an  accumulation point $(z^{*}, y^{*}, x^{*})$    such that
\begin{equation}\label{slwy}
\lim_{r\rightarrow +\infty}z(k_r)=z^{*},\quad \lim_{r\rightarrow +\infty}y(k_r)=y^{*},\quad \lim_{r\rightarrow +\infty}x(k_r)=x^{*}.   %\lim_{r\rightarrow\infty}\|y(k_r)-y^{*}\|=0,\quad \lim_{r\rightarrow\infty}\| x(k_r)-x^{*}\|=0,
\end{equation}
So, $\|z^{*}\|=1$, $\|y^{*}\|\leq M$, $\|x^{*}\|\leq K$. If
\begin{eqnarray}
\ell\left(\lbrace \varsigma: |\phi^{\tau}(g(\varsigma,y^{*},x^{*}))z^{*}|>0, g(\varsigma,y^{*},x^{*})\in \mathcal{S}\rbrace\right)=0,\nonumber
\end{eqnarray}
then  $\phi^{\tau}(y)z^{*}\equiv 0$ for all $y\in \mathcal{S}$ due to \dref{vard}, \dref{d=g} and the continuity of $\phi$. This clearly contradicts to Lemma \ref{qq}.    Therefore,  by \dref{Sijq},
\begin{eqnarray}
&&\lim_{k\rightarrow+\infty}\ell(\lbrace \varsigma: |\phi^{\tau}(g(\varsigma,y^{*},x^{*}))z^{*}|>\frac{1}{k}, g(\varsigma,y^{*},x^{*})\in  \mathcal{S}_k  \rbrace)\nonumber\\
&=&\ell\left(\lbrace \varsigma: |\phi^{\tau}(g(\varsigma,y^{*},x^{*}))z^{*}|>0, g(\varsigma,y^{*},x^{*})\in \mathcal{S}\rbrace\right)> 0,\nonumber
\end{eqnarray}
where $\mathcal{S}_k\triangleq  \prod_{i=1}^{n}\bigcup_{j=1}^{p_i} \left(c_i^j+\frac{d_i^j-c_i^j}{q+2}+\frac{1}{k},d_i^j-\frac{d_i^j-c_i^j}{q+2}-\frac{1}{k}\right)$. %    $T\left(S_i^{j}(q),\frac{1}{k}\right)$.
%Next, for any open interval $U$ we denote map $T:$
%\begin{eqnarray*}
%T(U,\varepsilon)=
%(a+\varepsilon,b-\varepsilon),\quad \mbox{where}~U=(a,b).
%\end{eqnarray*}
%If
%\begin{eqnarray}
%\ell\left(\lbrace \varsigma: |\phi^{\tau}(g(\varsigma,y^{*},\theta^{*}))z^{*}|>0, g(\varsigma,y^{*},\theta^{*})\in \mathcal{S}\rbrace\right)=0,\nonumber
%\end{eqnarray}
%we can deduce $\phi^{\tau}z^{*}\equiv 0$ on $\mathcal{S}$, which contradicts to Lemma \ref{qq}.
This implies that there exists an integer $h\geq 1$ such that
\begin{eqnarray}\label{ellh}
\ell(\lbrace \varsigma: |\phi^{\tau}(g(\varsigma,y^{*},x^{*}))z^{*}|>\frac{1}{h}, g(\varsigma,y^{*},x^{*})\in  \mathcal{S}_h \rbrace)>0.
\end{eqnarray}

Note that all  points  $\lbrace y(k_r), x(k_r)\rbrace_{r\geq 1}$  are restricted to $\overline{B(0,M)} \times \overline{B(0,K)} $,
\dref{vard} and \dref{d=g} then indicate that
there is a compact set $O'$ such that
$$\lbrace \varsigma: g(\varsigma,y(k_r),x(k_r))\in \mathcal{S}\rbrace\subset O'.$$
Further, $g$ and $\phi$ are continuous
due to \dref{vard}, \dref{d=g} and Assumption A2(i), hence \dref{slwy} shows
\begin{eqnarray*}\label{sl1}
\left\{
\begin{array}{l}
\lim\limits_{r\rightarrow\infty}\sup\limits_{\varsigma\in O'}\|g(\varsigma, y^*,x^*)-g(\varsigma, y(k_r),x(k_r))\|=0\\
\lim\limits_{r\rightarrow\infty}\sup\limits_{\varsigma\in O'}\|\phi^{\tau}(g(\varsigma,y^*,x))z^*-\phi^{\tau}(g(\varsigma,y(k_r),x(k_r)))z(k_r)\|=0
\end{array}.
\right.
\end{eqnarray*}
%or all $r>N$,
As a consequence, for all sufficiently large $r$,
\begin{eqnarray}
&&\ell(\lbrace \varsigma: |\phi^{\tau}(g(\varsigma,y^{*},x^{*}))z^{*}|>\frac{1}{h}, g(\varsigma,y^{*},x^{*})\in  \mathcal{S}_h \rbrace)\nonumber\\
&<&\ell(\lbrace \varsigma: |\phi^{\tau}(g(\varsigma,y(k_r),x(k_r)))z(k_r)|>\frac{1}{k_r}, g(\varsigma,y(k_r),x(k_r))\in \mathcal{S}\rbrace)<\frac{1}{k_r},\nonumber
\end{eqnarray}
which contradicts to \dref{ellh} by letting $r\rightarrow+\infty$.   Lemma  \ref{inf>0} thus follows.
\end{proof}

\begin{remark}
%The condition $\phi\in C(\mathbb{R})$ is unnecessary when $n=1$, and it
In  Lemma \ref{inf>0},  Assumption A2 can be weaken to Assumption A2' when $n=1$. Statement (i) is trivial.   For (ii),  note that \dref{ely} still holds by Assumption A2'. But, \dref{vard}, \dref{l1k} and \dref{ellh} yield that for all   sufficiently large $r$,
%by the proof of , it is easy to see that for $n=1$, inequality \dref{zh} can be replaced by
\begin{eqnarray*}
\frac{1}{k_r}
&> &\ell(\lbrace \varsigma: |\phi^{\tau}(g(\varsigma,y(k_r),x(k_r)))z(k_r)|>\frac{1}{k_r},g(\varsigma,y(k_r),x(k_r))\in \mathcal{S}\rbrace)\nonumber\\
&=&\ell(\lbrace y: |\phi^{\tau}(y)z(k_r)|>\frac{1}{k_r}, y\in \mathcal{S}\rbrace)\\
&\geq& \ell(\lbrace y\in \mathcal{S}: |\phi^{\tau}(y)z^{*}|>\frac{1}{k_r}+\frac{1}{h}\rbrace),\nonumber
\end{eqnarray*}
where  $\lbrace z(k_r),  y(k_r), x(k_r)\rbrace_{r\geq 1}$ is defined in the  proof of Lemma \ref{inf>0}.    Letting $r\rightarrow+\infty$ in the above inequality infers
\begin{eqnarray}
0
&\geq &\lim_{r\rightarrow+\infty}\ell(\lbrace y\in \mathcal{S}: |\phi^{\tau}(y)z^{*}|>\frac{1}{k_r}+\frac{1}{h}\rbrace)\nonumber\\
&=&\ell(\lbrace y\in \mathcal{S}: |\phi^{\tau}(y)z^{*}|>\frac{1}{h}\rbrace),
\end{eqnarray}
which contradicts to \dref{ellh}.
\end{remark}

At the end of this section, fix two numbers  $M$ and $K$.  According to Lemma \ref{inf>0}(ii),  we select a  $\delta^{*}$ such that \dref{3i} holds. Now, for any unit vector $x\in\mathbb{R}^{m}$, define $U_{x}\triangleq U_x(\delta^{*})$.

\subsection{\textbf{The Properties of $U_x$}}\label{32}

To analyze the properties of $U_x$, we first prove a lemma below.

\begin{lemma}\label{imp}
Fix an integer $i\in[1,n]$. Let  $x_i=(x_{i1},\ldots,x_{im_i})^{\tau}\in \mathbb{R}^{m_i}$ be a non-zero vector and $d>c$ be two  numbers satisfying $[c,d]\subset E_i$. Also, let $\lbrace r_l\rbrace_{l=1}^{2^{m_i-1}}$ be a sequence of numbers that   $d\geq r_1>r_2>\ldots>r_{2^{m_i-1}}\geq c$ and
\begin{equation}\label{2n}
\sum_{j=1}^{m_i}f_{ij}'(r_l)x_{ij}=0,\quad 1\leq l\leq 2^{m_i-1},
\end{equation}
where $f_{ij}'=Df_{ij}, j=1,\ldots,m_i$.
Then, the following two statements hold:\\
(i) there exists an array $s\in \mathcal{H}_i$ such that % $\Gamma_{s}^{'}$ has at least one zero in $[c,d]$, i.e.,
\begin{equation*}
A(\bar\Gamma_{s}^{(i)})\cap [c,d]\neq\emptyset;
\end{equation*}
(ii) if for every $s\in \mathcal{H}_i$,
%\begin{equation}\label{at}
$A(\bar\Gamma_{s}^{(i)})\cap [c,d] $  is either $\emptyset$ or $[c,d]$,    %~\mbox{or} ~[c,d],
%\end{equation}
then
\begin{equation}\label{fcd}
\sum_{j=1}^{m_i}f_{ij}'(y)x_{ij}=0,\quad \forall y\in [c,d].
\end{equation}
\end{lemma}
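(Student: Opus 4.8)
We have functions $f_{i1},\ldots,f_{im_i}$ on $E_i$, a non-zero vector $x_i$, and points $r_1>\cdots>r_{2^{m_i-1}}$ in $[c,d]$ where the linear combination $\sum_j f_{ij}'(y)x_{ij}$ vanishes. The operators $\Lambda_k$ build up iterated ratios of derivatives; $\Gamma^{(i)}_s = \Lambda_k(f_{ii_1},\ldots,f_{ii_k})$ for $s=(i_1,\ldots,i_k)$, and $\bar\Gamma^{(i)}_s = D\Gamma^{(i)}_s$. Statement (i) asserts that SOME $\bar\Gamma^{(i)}_s$ has a zero in $[c,d]$; statement (ii) is a rigidity conclusion: if every such zero set is trivial ($\emptyset$ or all of $[c,d]$), then the original linear combination vanishes identically on $[c,d]$.

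**Plan for (i).** The plan is to induct on $m_i$. For $m_i=1$ the hypothesis is $f_{i1}'(r_1)x_{i1}=0$ with $x_{i1}\neq 0$, so $f_{i1}'(r_1)=0$; since $\bar\Gamma^{(i)}_{(1)} = D\Lambda_1(f_{i1}) = f_{i1}'$, we get $A(\bar\Gamma^{(i)}_{(1)})\cap[c,d]\ni r_1$. For the inductive step, write the vanishing condition at the $2^{m_i-1}$ points as $\sum_{j=1}^{m_i} f_{ij}'(r_l)x_{ij}=0$. Since $x_i\neq 0$, some coordinate, say $x_{i,m_i}\neq 0$ after relabeling; then at each $r_l$ we can divide to express $f_{i m_i}'(r_l) = -\sum_{j<m_i} (x_{ij}/x_{i m_i}) f_{ij}'(r_l)$, i.e.\ $\sum_{j<m_i}(f_{ij}'/f_{i m_i}')(r_l)\,\tilde x_{ij} = 1$ where $\tilde x_{ij}=x_{ij}/x_{i m_i}$ — provided $f_{i m_i}'(r_l)\neq 0$. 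If $f_{i m_i}'$ vanishes at some $r_l$, then $r_l\in A(\bar\Gamma^{(i)}_{(m_i)})$ and we are done. Otherwise, consider the function $h := \sum_{j<m_i}(f_{ij}'/f_{i m_i}')\tilde x_{ij}$, which equals the constant $1$ at all $2^{m_i-1}$ points $r_l$. By the mean value theorem applied on consecutive pairs, $h'$ vanishes at $2^{m_i-1}-1 = 2^{m_i-1}-1$ points — but I want $2^{m_i-2}$ points to feed the induction, which is available since $2^{m_i-1}-1 \geq 2^{m_i-2}$ for $m_i\geq 2$. Now $h' = \sum_{j<m_i} D(f_{ij}'/f_{i m_i}')\,\tilde x_{ij} = \sum_{j<m_i} D\!\left(\tfrac{Df_{ij}}{Df_{i m_i}}\right)\tilde x_{ij}$, and by the definition of $\Lambda_2$ this is $\sum_{j<m_i} D\Lambda_2(f_{ij},f_{i m_i})\,\tilde x_{ij}$. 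So the condition $h'(\cdot)=0$ at those points is a linear vanishing condition in the $m_i-1$ functions $g_j := \Lambda_2(f_{ij},f_{i m_i})$, $j=1,\ldots,m_i-1$, with the non-zero coefficient vector $(\tilde x_{i1},\ldots,\tilde x_{i,m_i-1})$ — well, this vector could in principle be zero if all $x_{ij}=0$ for $j<m_i$, but then the original condition forces $f_{i m_i}'(r_l)=0$, handled above. Applying the inductive hypothesis (with $m_i-1$ in place of $m_i$) to the $g_j$'s on $[c,d]$ yields an array $s'\in\mathcal H_k^{(1,\ldots,m_i-1)}$ with $A(D\Lambda_k(g_{j_1},\ldots,g_{j_k}))\cap[c,d]\neq\emptyset$; by Lemma~\ref{Lambda} and the recursive definition \dref{Lambdal}, $\Lambda_k(g_{j_1},\ldots,g_{j_k}) = \Lambda_k(\Lambda_2(f_{ij_1},f_{i m_i}),\ldots) $ unwinds to $\Lambda_{k+1}(f_{ij_1},\ldots,f_{ij_k},f_{i m_i}) = \Gamma^{(i)}_{(j_1,\ldots,j_k,m_i)}$, giving the desired array $s=(j_1,\ldots,j_k,m_i)\in\mathcal H_i$. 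The bookkeeping on which coordinate is nonzero and on well-definedness of the ratios (avoiding points where $\bar\Gamma^{(i)}_s$ is undefined, which by Lemma~\ref{ws} are themselves covered by zero sets of lower-order $\bar\Gamma$'s) is the part that needs care.

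**Plan for (ii).** Under the extra hypothesis that each $A(\bar\Gamma^{(i)}_s)\cap[c,d]$ is $\emptyset$ or $[c,d]$, run the same induction but track the stronger conclusion. If $f_{i m_i}'$ vanishes at one $r_l$, then $A(\bar\Gamma^{(i)}_{(m_i)})\cap[c,d]$ is nonempty, hence equals $[c,d]$ by hypothesis, so $f_{i m_i}'\equiv 0$ on $[c,d]$; then from the vanishing conditions we may drop the $m_i$-th term (its coefficient times zero) and reduce to $m_i-1$ functions with the same $2^{m_i-1}\geq 2^{m_i-2}$ points, applying the inductive hypothesis to conclude $\sum_{j<m_i}f_{ij}'(y)x_{ij}=0$ on $[c,d]$, which combined with $f_{i m_i}'\equiv 0$ gives \dref{fcd}. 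Otherwise $f_{i m_i}'$ is nonvanishing on $[c,d]$; then $h$ as above is defined on all of $[c,d]$, equals $1$ at the $r_l$, and by the argument in (i) the induction produces some $s$ with $A(\bar\Gamma^{(i)}_s)\cap[c,d]\neq\emptyset$, hence $=[c,d]$, hence $\Lambda_k(g_{j_1},\ldots)$ has identically vanishing derivative on $[c,d]$, i.e.\ $h'\equiv 0$, i.e.\ $h$ is constant on $[c,d]$; since $h=1$ at the $r_l$, $h\equiv 1$ on $[c,d]$, which reads $\sum_{j<m_i}(f_{ij}'/f_{i m_i}')(y)\tilde x_{ij} = 1$ on $[c,d]$, equivalently $\sum_{j=1}^{m_i}f_{ij}'(y)x_{ij}=0$ on $[c,d]$ after multiplying by $f_{i m_i}'(y)x_{i m_i}$. (I should double-check that the inductive hypothesis of (ii) as applied to the $g_j$'s legitimately has its own vanishing points configured as $2^{(m_i-1)-1}$ distinct points in $[c,d]$, which is the $2^{m_i-1}-1\geq 2^{m_i-2}$ count from consecutive mean-value-theorem applications; one keeps only $2^{m_i-2}$ of them.)

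**Main obstacle.** The genuinely delicate point is managing the domains of definition: the operators $\Lambda_k$ and $\bar\Gamma^{(i)}_s$ are ratios of successive derivatives and are only defined where all the denominators are nonzero, so "$A(\bar\Gamma^{(i)}_s)\cap[c,d]$" must be interpreted relative to $W_s(i)$, and the reductions above implicitly require that the points $r_l$ (or the MVT points derived from them) lie in the appropriate $W_s(i)$. One handles this exactly as in Lemma~\ref{ws}: whenever a denominator such as $f_{i m_i}'$ or a lower $\bar\Gamma$ vanishes at a relevant point, that vanishing itself is an instance of the conclusion we want (it puts a point into $A(\bar\Gamma^{(i)}_{s})$ for a shorter array $s$), so either we finish immediately or the relevant ratios are genuinely defined and the induction proceeds. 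The second, more mechanical nuisance is keeping the exponent count $2^{m_i-1}$ consistent through the halving at each induction step and the relabeling of the distinguished nonzero coordinate; this is routine but must be written out carefully.
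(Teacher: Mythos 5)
Your plan is essentially the paper's own argument. For part (ii), your functions $g_j=\Lambda_2(f_{ij},f_{im_i})=f_{ij}'/f_{im_i}'$ are exactly the paper's $F_j=\Gamma_{(j,h)}^{(i)}$, your unwinding identity $\Lambda_k(g_{j_1},\ldots,g_{j_k})=\Gamma_{(j_1,\ldots,j_k,m_i)}^{(i)}$ is \dref{mj1}, and the divide--Rolle--induct scheme is the paper's \dref{fir}--\dref{fiy}. For part (i) the paper argues instead by contradiction within a fixed $m_i$: assuming every $A(\bar\Gamma_{s}^{(i)})$ misses $[c,d]$, it cascades Rolle down the arrays $(j,k+1,\ldots,m_i)$, halving the point count at each step, until it forces $x_{ij}=0$ for all $j$; your induction on the number of functions is the same Rolle cascade reorganized and is sound, provided the case split is on whether $f_{im_i}'$ vanishes \emph{anywhere} in $[c,d]$ (not merely at the $r_l$), since otherwise $h$ need not be differentiable between consecutive $r_l$'s --- a fix you yourself indicate in the closing paragraph, and which is harmless because any zero of $f_{im_i}'=\bar\Gamma_{(m_i)}^{(i)}$ in $[c,d]$ already yields (i).

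One written step does need repair: in Case B of (ii), the chain ``(i) produces some $s$ with $A(\bar\Gamma_{s}^{(i)})\cap[c,d]\neq\emptyset$, hence $=[c,d]$, hence $h'\equiv 0$'' is a non sequitur --- identical vanishing of a single iterated ratio $D\Lambda_k(g_{j_1},\ldots,g_{j_k})$ says nothing about $h'=\sum_{j<m_i}Dg_j\,\tilde x_{ij}$. The correct move is the one in your own parenthetical (and in the paper): apply the induction hypothesis of (ii) itself to the family $\lbrace g_j\rbrace_{j<m_i}$, with the $2^{m_i-2}$ Rolle points and with the hypothesis ``each zero set is $\emptyset$ or $[c,d]$'' inherited through the unwinding identity; its conclusion is precisely $h'\equiv 0$ on $[c,d]$, after which multiplying back by $f_{im_i}'x_{im_i}$ gives \dref{fcd}. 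You should also record two degenerate subcases: in your Case A of (ii), if $(x_{i1},\ldots,x_{i,m_i-1})=0$ then \dref{fcd} follows at once from $f_{im_i}'\equiv 0$ on $[c,d]$; and in Case B the truncated coefficient vector is automatically nonzero, since otherwise \dref{2n} would force $f_{im_i}'(r_l)=0$, contradicting nonvanishing. With these repairs your proposal coincides with the paper's proof.
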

\begin{proof}
(i) Suppose $\bigcup_{s\in \mathcal{H}_i}A(\bar\Gamma_{s}^{(i)})\cap [c,d]=\emptyset$. Then,  for each integer $k\in[1,m_i]$, there exist $2^{k-1}$ numbers $\lbrace \varepsilon_{k,l}\rbrace_{l=1}^{2^{k-1}}$ satisfying $d\geq \varepsilon_{k,1}>\ldots>\varepsilon_{k,2^{k-1}}\geq c$ and
\begin{equation}\label{kx}
\sum_{j=1}^{k}\bar\Gamma_{(j,k+1,\ldots,m_i)}^{(i)}(\varepsilon_{k,l})x_{ij}=0,\quad 1\leq l\leq 2^{k-1},
\end{equation}
where $\bar\Gamma_{(j,m_i+1,\ldots, m_i)}^{(i)}\triangleq f_{ij}'.$

As a matter of fact,
when $k=m_i$, \dref{2n} leads to \dref{kx} immediately. We now prove \dref{kx}  by induction.   Assume \dref{kx} holds for $k=m'$, where  $m'$ is an integer in $[2,m_i]$. Hence we can find $2^{m'-1}$ numbers of $\lbrace \varepsilon_{m',l}\rbrace_{l=1}^{2^{m'-1}}$ such that $d\geq \varepsilon_{m',1}>\ldots>\varepsilon_{m',2^{m'-1}}\geq c$  and
\begin{equation}
\sum_{j=1}^{m'}\bar\Gamma_{(j,m'+1,\ldots,m_i)}^{(i)}(\varepsilon_{m',l})x_{ij}=0,\quad 1\leq l\leq 2^{m'-1}.\nonumber
\end{equation}
By $\bigcup_{s\in \mathcal{H}_i}A(\bar\Gamma_{s}^{(i)})\cap [c,d]=\emptyset$, every $\Gamma_{(j,m',\ldots,m_i)}^{(i)}$ is well-defined in $[c,d]$, then for $1\leq l\leq 2^{m'-1}$,
\begin{equation}
\sum_{j=1}^{m'-1}\Gamma_{(j,m',\ldots,m_i)}^{(i)}(\varepsilon_{m',l})x_{ij}=\sum_{j=1}^{m'-1}\frac{\bar\Gamma_{(j,m'+1,\ldots,m_i)}^{(i)}}{\bar\Gamma_{(m',\ldots,m_i)}^{(i)}}(\varepsilon_{m',l})x_{ij}=-x_{im'}.\nonumber
\end{equation}
Taking account of the \textit{Rolle's theorem}, there are some $\lbrace \varepsilon_{m'-1,l}\rbrace_{l=1}^{2^{m'-2}}$ with
$\varepsilon_{m'-1,l}\in(\varepsilon_{m',2l-1},\varepsilon_{m',2l})$ such that
\begin{equation*}
\sum_{j=1}^{m'-1}\bar\Gamma_{(j,m',\ldots,m_i)}^{(i)}(\varepsilon_{m'-1,l})x_{ij}=0,\quad 1\leq l\leq 2^{m'-2}.
\end{equation*}
Therefore, \dref{kx} holds for $k=m'-1$ and this completes the induction.

Now, by letting $k=1$ in \dref{kx}, there is a number $\varepsilon_{1,1}\in[c,d]$ such that $\bar\Gamma_{(1,\ldots,m_i)}^{(i)}(\varepsilon_{1,1})x_{i1}=0$. Since $\bigcup_{s\in \mathcal{H}_i}A(\bar\Gamma_{s}^{(i)})\cap [c,d]=\emptyset$,  $\bar\Gamma_{(1,\ldots,m_i)}^{(i)}(\varepsilon_{1,1})\neq 0$, and hence  $x_{i1}=0$. By the symmetry of $x_{i1},\ldots,x_{im_i}$ in \dref{kx}, we conclude that  $x_{ij}=0$ for all $ j\in [1,m_i]$. But this  is impossible due to $\|x_i\|\not=0$ and thus $\bigcup_{s\in \mathcal{H}_i}A(\bar\Gamma_{s}^{(i)})\cap [c,d]\neq\emptyset$.\\
(ii) Let $I$ be an open interval containing $[c,d]$.  It suffices to prove the claim that for every  function sequence  $f_{i1},\ldots,f_{im_i}\in C^{m_i}(I)$ satisfying \dref{2n},
if $A(\bar\Gamma_{s}^{(i)})\cap [c,d] $  is either $\emptyset$ or $[c,d]$,  $\forall  s\in \mathcal{H}_i$, then  \dref{fcd}  holds. We show it by induction.
When $m_i=1$,  \dref{2n}   reduces to  $f_{i1}'(r_1)x_{i1}=0$. Since $x_{i1}\not=0$,  $A(f_{i1}')\cap[c,d]\not=\emptyset$, which means $A(f_{i1}')\supset[c,d]$ by assumption. So,  $f_{i1}'(y)x_{i1}\equiv0$ for all $y\in[c,d]$.
Suppose  the   claim  mentioned above holds for  all    $m_i\in[1,h-1]$, $h\geq 2$.

We now consider the claim for  $m_i=h$. In this case,  the non-zero vector $x_i=(x_{i1},\ldots,x_{ih})^{\tau}$. First, assume that there is an integer $j'\in[1,h]$ such that $|x_{ij'}|<\|x_i\|$ and
\begin{equation}\label{afj}
A(f_{ij'}')\cap [c,d]=\emptyset.
\end{equation}
Without loss of generality, let $j'=h$. % Denote $x'_i\triangleq(x_{i1},\ldots,x_{i(h-1)})^{\tau}$ and clearly, $\|x'_i\|\not=0$. Further,
Define the following $h-1$ functions:
\begin{eqnarray*}
F_{j}\triangleq{\Gamma_{(j,h)}^{(i)}},\quad 1\leq j \leq h-1.
\end{eqnarray*}
Owing to \dref{afj}, $F_j\in C^{h-1}(I)$, $1\leq j \leq h-1 $ with $  h\geq 2$ are  well-defined. %in some open interval containing $[c,d]$.
 Moreover, \dref{2n} yields
\begin{equation}\label{fir}
\sum_{j=1}^{h-1}F_{j}(r_l)x_{ij}=-x_{ih},\quad 1\leq l\leq 2^{h-1}.
\end{equation}
Therefore, by applying the \textit{Rolle's theorem}, there exist $2^{h-2}$ numbers $\varepsilon_{l}\in[r_{2l},r_{2l-1}]$, $l\in[1,2^{h-2}]$ such that
\begin{equation}\label{F'e}
\sum_{j=1}^{h-1}DF_{j}(\varepsilon_l)x_{ij}=0,\quad 1\leq l\leq 2^{h-2}.
\end{equation}
Here, $(x_{i1},\ldots,x_{i(h-1)})^{\tau}$ is nonempty by $|x_{ih}|<\|x_i\|$.

Since for every $(i_1,\ldots,i_{l})\in\bigcup_{k=1}^{h-1} \mathcal{H}_k^{(1,\ldots,h-1)}$, $(i_1,\ldots,i_{l},h)\in \bigcup_{k=1}^{h} \mathcal{H}_k^{(1,\ldots,h)}$, then by \dref{Lambdal},
\begin{equation}\label{mj1}
\Lambda_{l}(F_{i_1},\ldots,F_{i_{l}})=\Lambda_{l+1}(f_{ii_1},\ldots,f_{ii_{l}},f_{ih})=\Gamma_{(i_1,\ldots,i_{l},h)}^{(i)}.
\end{equation}
Because  $A(\bar\Gamma_{s}^{(i)})\cap [c,d] $  is either $\emptyset$ or $[c,d], \forall s\in \bigcup_{k=1}^{h} \mathcal{H}_k^{(1,\ldots,h)}$, \dref{mj1} yields
\begin{equation*}
A\left(D\Lambda_{l}(F_{i_1},\ldots,F_{i_{l}})\right)\cap [c,d]=\emptyset~\mbox{or} ~[c,d].
\end{equation*}
%It is worth nothing that $\bar\Gamma_{s}^{(i)}=D\Lambda_{l}(f_{ii_1},\ldots,f_{ii_{l}})$ for any $s=(i_1,\ldots,i_{l})\in\bigcup_{k=1}^{h-1} \mathcal{H}_k^{(1,\ldots,h)}$.
Consequently, by the induction hypothesis with $m_i=h-1$ and   $f_{ii_j}=F_j, j\in [1,h-1]$ satisfying \dref{F'e}, we  conclude
\begin{equation}\label{fiy}
\sum_{j=1}^{h-1}DF_{j}(y)x_{ij}=0,\quad \forall y\in [c,d].
\end{equation}
In view of \dref{fir} and \dref{fiy}, we  deduce that
%\begin{equation}
$\sum_{j=1}^{h-1}F_{j}(y)x_{ij}=-x_h$ for any $ y\in[c,d]$,
%\end{equation}
and hence
\begin{equation}\label{mfy}
\sum_{j=1}^{h}f_{ij}'(y)x_{ij}\equiv0 \quad \mbox{for all}\quad  y\in [c,d].
\end{equation}

Now, it remains to consider the case that for each integer
 $j\in[1,h]$, either  $|x_{ij}|=\|x_{i}\|$ or $[c,d]\subset A(f_{ij}')$.
 If $|x_{ij}|<\|x_{i}\|$ for all $j\in[1,h]$, then $f_{ij}'\equiv 0$ in $[c,d]$ for all $j\in[1,h]$, which leads to \dref{mfy}. So, assume there is an integer $j'\in[1,h]$  that $|x_{ij'}|=\|x_i\|$. Without loss of generality, let $j'=h$, then $x_{ij}=0$ for all $j\in[1,h-1]$.   Substituting this into  \dref{2n}, one has
\begin{equation*}
f_{ih}'(r_l)=0,\quad 1\leq l\leq 2^{h-1}.
\end{equation*}
The induction hypothesis thus yields $[c,d]\subset A(f_{ih}')$, and hence
\begin{equation*}
\sum_{j=1}^{h}f_{ij}'(y)x_{ij}=f_{ih}'(y)x_{ih}=0,\quad \forall y\in [c,d].
\end{equation*}
Therefore, the claim is
 true for $m_i=h$ and   we complete the induction.
\end{proof}
We now return to analyze $Z_s^{1}(i)\cap ( \bigcup_{j=1}^{p_i}S_{i}^{j}(q))$. Observe  that for each array $s\in\mathcal{H}_i$, if $Z_{s}^{1}(i)\cap(\bigcup_{j=1}^{p_i}S_{i}^{j}(q))\neq\emptyset$, it is  a countable union  of disjoint open intervals. Denote the set of these intervals by $\mathcal{G}_s(i)\triangleq \lbrace I_{s}^{j}(i)\rbrace_{j\geq 1}$, where
\begin{eqnarray}
I_{s}^{j}(i)=(a_{s}^{j}(i),b_{s}^{j}(i)),\quad j=1,2,\ldots.
\end{eqnarray}
Let
%\begin{eqnarray}
$\mathcal{G}(i)\triangleq {\bigcup_{ s\in  \mathcal{H}_i}\mathcal{G}_s(i)}$ for each $i\in[1,n].$
Furthermore, define
\begin{eqnarray*}
H(i)\triangleq \left(\bigcup_{s\in\mathcal{H}_i}Z_{s}^{3}(i)\right)\cap \left(\bigcup_{j=1}^{p_i}S_{i}^{j}(q)\right),\quad i\in[1,n].
\end{eqnarray*}
%we can prove
%\begin{equation*}
%|H(i)|<+\infty,\quad \forall i\in [1,n].
%\end{equation*}

%\end{eqnarray}
\begin{lemma}
For each $i\in[1,n]$,
\begin{equation}\label{card}
|\mathcal{G}(i)|<+\infty \quad \mbox{and}\quad |H(i)|<+\infty.
\end{equation}
\end{lemma}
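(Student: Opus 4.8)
The strategy is to localize the claim to a single permutation $s\in\mathcal{H}_i$ and a single interval $S_i^j(q)$, where the decisive structural fact is that, by construction, $\overline{S_i^j(q)}$ meets none of the sets $Z_{s'}^2(i)$. Since $\mathcal{H}_i$ is finite, $\{S_i^j(q)\}_{j=1}^{p_i}$ is a finite family of open intervals, and $\mathcal{G}_s(i)$ simply enumerates the components of $Z_s^1(i)\cap\bigcup_{j=1}^{p_i}S_i^j(q)=\bigcup_{j=1}^{p_i}\big(Z_s^1(i)\cap S_i^j(q)\big)$ (a disjoint union over the intervals $S_i^j(q)$), it suffices to prove that for each $s\in\mathcal{H}_i$ and each $j\in[1,p_i]$ the open set $Z_s^1(i)\cap S_i^j(q)$ has finitely many connected components and the set $Z_s^3(i)\cap S_i^j(q)$ is finite; summing over $s$ and $j$ then gives $|\mathcal{G}(i)|\le\sum_{s\in\mathcal{H}_i}|\mathcal{G}_s(i)|<+\infty$ and $|H(i)|<+\infty$. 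I would first record that, because of the strict shrinking in the definition of $S_i^j(q)$, the closure $\overline{S_i^j(q)}$ is a compact subinterval of $S_i^j=(c_i^j,d_i^j)\subset S(i)\cap(-d,d)$, so $\overline{S_i^j(q)}\cap Z_{s'}^2(i)=\emptyset$ for every $s'\in\mathcal{H}_i$, directly from $S(i)=E_i\setminus\bigcup_{s'\in\mathcal{H}_i}Z_{s'}^2(i)$.

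The core of the argument is a short contradiction built on the decomposition \dref{z123}. Suppose $Z_s^1(i)\cap S_i^j(q)$ had infinitely many components $\{I_l\}_{l\ge1}$, and choose $x_l\in I_l$. These points are pairwise distinct and lie in the compact interval $\overline{S_i^j(q)}$, so along a subsequence $x_{l_r}\to p\in\overline{S_i^j(q)}$ with $x_{l_r}\ne p$ for all $r$. Since $x_{l_r}\in Z_s^1(i)=\mathrm{int}(A(\bar\Gamma_s^{(i)}))\subset A(\bar\Gamma_s^{(i)})$, the limit $p$ lies in the derived set $d(A(\bar\Gamma_s^{(i)}))$. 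Moreover $p\notin Z_s^1(i)$: otherwise some interval $(p-\rho,p+\rho)$ would be contained in $Z_s^1(i)$, whence $(p-\rho,p+\rho)\cap S_i^j(q)$ — being an interval, hence connected — would lie in a single component of $Z_s^1(i)\cap S_i^j(q)$, yet it contains $x_{l_r}$ for all large $r$, contradicting that the $I_{l_r}$ are distinct components. Therefore $p\in d(A(\bar\Gamma_s^{(i)}))\setminus Z_s^1(i)=Z_s^2(i)$ by \dref{z123}, which is impossible since $p\in\overline{S_i^j(q)}\subset S(i)$. Hence $Z_s^1(i)\cap S_i^j(q)$ has only finitely many components.

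The finiteness of $Z_s^3(i)\cap S_i^j(q)$ is handled in the same spirit: were it infinite, a subsequence of its points would converge to some $p\in\overline{S_i^j(q)}$ with $p$ a genuine accumulation point of $A(\bar\Gamma_s^{(i)})$, so $p\in d(A(\bar\Gamma_s^{(i)}))$; but $p$ cannot lie in $\mathrm{int}(A(\bar\Gamma_s^{(i)}))$, since a neighbourhood of such a point consists entirely of non-isolated points of $A(\bar\Gamma_s^{(i)})$ and so cannot absorb infinitely many points of $Z_s^3(i)$. Again $p\in Z_s^2(i)$, contradicting $p\in S(i)$. I do not expect a serious obstacle here — the whole argument is purely topological and uses no regularity of the $f_{ij}$, the point being that $Z_s^2(i)$ is exactly the set of accumulation points of $A(\bar\Gamma_s^{(i)})$ that are not interior to it, and these have been excised in passing to $S(i)$. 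The two places demanding care are: extracting the limit inside the compact set $\overline{S_i^j(q)}$ (this is what pins the accumulation point into $S(i)$), and checking that this limit is a bona fide point of $d(A(\bar\Gamma_s^{(i)}))$, so that the clash with the removal of $Z_s^2(i)$ is legitimate.
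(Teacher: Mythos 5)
Your proof is correct and takes essentially the same route as the paper's: a compactness/accumulation-point argument inside $\overline{S_i^{j}(q)}$ forcing a limit point into $Z_{s}^{2}(i)$, which contradicts $\overline{S_i^{j}(q)}\subset S(i)=E_i\backslash\bigcup_{s'\in\mathcal{H}_i}Z_{s'}^{2}(i)$, combined with the finiteness of $\mathcal{H}_i$ and of $\{S_i^{j}(q)\}_{j=1}^{p_i}$. The only difference is minor: the paper accumulates the endpoints $b_{s^*}^{j}(i)$ and invokes the continuity of $\bar\Gamma_{s^*}^{(i)}$ to place the limit in $A(\bar\Gamma_{s^*}^{(i)})$, whereas you accumulate interior points of the components (resp.\ points of $Z_s^{3}(i)$) so the limit lands directly in $d(A(\bar\Gamma_{s}^{(i)}))$, which streamlines that step and also writes out the $|H(i)|<+\infty$ case that the paper omits as similar.
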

\begin{proof}
Suppose $|\mathcal{G}(i)|=+\infty$ for some $i\in[1,n]$, then there is an array $s^{*}\in  \mathcal{H}_i$ such that $|\mathcal{G}_{s^{*}}(i)|=+\infty$. Let $y\in\bigcup_{j=1}^{p_i}\overline{S_{i}^{j}(q)}$ be an accumulation point of $\lbrace b_{s^{*}}^{j}(i)\rbrace_{j\geq 1}$. By the continuity of $ \bar\Gamma_{s^{*}}^{(i)}$ in set $ \bigcup_{j=1}^{p_i}\overline{S_{i}^{j}(q)}\subset E_i$, $y\in\bigcup_{j=1}^{p_i}\overline{S_{i}^{j}(q)}\cap A(\bar\Gamma_{s^{*}}^{(i)})$. Moreover, it is evident that  $y\not\in Z_{s^{*}}^{1}(i)\cup Z_{s^{*}}^{3}(i)$, so
 %For any open interval $U\supset y$, there is a $I_{s^{*}}^{j}(i)\in G_{s^{*}}(i)$ satisfies
%\begin{equation}\label{uw}
%I_{s^{*}}^{j}(i)\subset U,\quad y\not\in W_{s}^{j}.
%\end{equation}
%By the continuity, $y\in\bigcup_{j=1}^{p_i}\overline{S_{i}^{j}(q)}\cap A(\bar\Gamma_{s^{*}}^{(i)})$, clearly, $y\not\in Z_{s^{*}}^{1}(i)\cup Z_{s^{*}}^{3}(i)$,
$$y\in Z_{s^{*}}^{2}(i)\cap\left(\bigcup_{j=1}^{p_i}\overline{S_{i}^{j}(q)}\right)\subset E_i\cap\left(\bigcup_{s\in\mathcal{H}_i}Z_{s}^{2}(i)\right).$$
 However,
\begin{eqnarray}\label{dist}
E_i\cap\left(\bigcup_{s\in\mathcal{H}_i}Z_{s}^{2}(i)\right)=E_{i}\backslash S(i)\subset E_{i}\backslash \bigcup_{j=1}^{p_i}S_i^{j},
\end{eqnarray}
and
\begin{eqnarray}\label{dd}
\left(E_{i}\backslash \bigcup_{j=1}^{p_i}S_i^{j}\right)\cap\left( \bigcup_{j=1}^{p_i}\overline{S_{i}^{j}(q)}\right)=\emptyset.
\end{eqnarray}
The contradiction is derived immediately by comparing \dref{dist}, \dref{dd} and the fact $y\in E_i\cap\left(\bigcup_{s\in\mathcal{H}_i}Z_{s}^{2}(i)\right)$. Thus, $|\mathcal{G}(i)|<+\infty$.

As to $|H(i)|<+\infty$, the proof  is quite similar to that given  for $|\mathcal{G}(i)|<+\infty$ and  is omitted.
\end{proof}

The following lemma is based on the above two lemmas.
\begin{lemma}\label{sdf}
Given $i\in [1,n]$, let  $x_i\in\mathbb{R}^{m_i}$ be  a non-zero vector.  Denote $(\phi^{(i)})'=(f_{i1}',\ldots,f_{im_i}')^{\tau}$ and
\begin{eqnarray*}
\left\{
\begin{array}{l}
K_{i}=\mbox{int}(A(x_{i}^{\tau}(\phi^{(i)})'))\cap\left(\bigcup_{j=1}^{p_i}\overline{S_i^{j}(q)}\right)\\
L_{i}=(A(x_{i}^{\tau}(\phi^{(i)})')\cap\left(\bigcup_{j=1}^{p_i}\overline{S_i^{j}(q)})\right)\setminus K_{i}
\end{array},
\right.
\end{eqnarray*}
then $|L_{i}|\leq 2^{m_i}p_i(3|\mathcal{G}(i)|+|H(i)|+2)$.
\end{lemma}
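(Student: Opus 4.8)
I would argue by contradiction: assume
$|L_{i}|>2^{m_i}p_i(3|\mathcal{G}(i)|+|H(i)|+2)$ and manufacture a closed subinterval $[c,d]\subset E_i$ on which Lemma~\ref{imp}(ii) applies and forces $x_i^{\tau}(\phi^{(i)})'\equiv 0$; this will contradict the fact that $L_i$ avoids $K_i=\mbox{int}(A(x_i^{\tau}(\phi^{(i)})'))\cap\bigcup_{j}\overline{S_i^{j}(q)}$. Since $L_{i}\subset\bigcup_{j=1}^{p_i}\overline{S_i^{j}(q)}$, the pigeonhole principle first gives an index $j_0$ such that $\overline{S_i^{j_0}(q)}$ contains more than $2^{m_i}(3|\mathcal{G}(i)|+|H(i)|+2)$ points of $L_i$. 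Write $\overline{S_i^{j_0}(q)}=[\alpha,\beta]$ and note $[\alpha,\beta]\subset S_i^{j_0}\subset S(i)\subset E_i$; in particular $[\alpha,\beta]$ meets no $Z_{s}^{2}(i)$, so $A(\bar\Gamma_{s}^{(i)})\cap[\alpha,\beta]\subset\bigl(Z_{s}^{1}(i)\cup Z_{s}^{3}(i)\bigr)\cap[\alpha,\beta]$ for every $s\in\mathcal{H}_i$.

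Next I would subdivide $[\alpha,\beta]$ by a finite set of ``markers'': the endpoints of the (at most $|\mathcal{G}(i)|$) open intervals of $\mathcal{G}(i)$ contained in $S_i^{j_0}(q)$, the points of $H(i)$ lying in $S_i^{j_0}(q)$, and the two endpoints $\alpha,\beta$. These amount to at most $2|\mathcal{G}(i)|+|H(i)|+2$ points and cut $(\alpha,\beta)$ into at most $2|\mathcal{G}(i)|+|H(i)|+1$ open subintervals. Distributing the (more than $2^{m_i}(3|\mathcal{G}(i)|+|H(i)|+2)$) points of $L_i\cap[\alpha,\beta]$ among the markers and these subintervals, a counting argument yields an open subinterval $(c',d')$ carrying at least $2^{m_i-1}+2$ points of $L_i$; list them as $\rho_1<\dots<\rho_N$ with $N\ge 2^{m_i-1}+2$ and set $[c,d]:=[\rho_1,\rho_N]\subset(c',d')$.

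Because $(c',d')$ contains no marker, for each $s\in\mathcal{H}_i$ the interval $(c',d')$ is either contained in one of the intervals of $\mathcal{G}_s(i)$, whence $[c,d]\subset\overline{Z_{s}^{1}(i)}\subset A(\bar\Gamma_{s}^{(i)})$ and $A(\bar\Gamma_{s}^{(i)})\cap[c,d]=[c,d]$, or disjoint from $Z_{s}^{1}(i)$; in the latter case, since $(c',d')$ also misses $Z_{s}^{3}(i)$ (no $H(i)$-marker inside) and $Z_{s}^{2}(i)$, one gets $A(\bar\Gamma_{s}^{(i)})\cap[c,d]=\emptyset$. Thus the hypothesis of Lemma~\ref{imp}(ii) is met on $[c,d]$, and since $\rho_1,\dots,\rho_{2^{m_i-1}}\in L_i\subset A(x_i^{\tau}(\phi^{(i)})')$ supply the required $2^{m_i-1}$ zeros of $\sum_j f_{ij}'x_{ij}$, Lemma~\ref{imp}(ii) gives $x_i^{\tau}(\phi^{(i)})'\equiv 0$ on $[c,d]$, i.e.\ $[c,d]\subset A(x_i^{\tau}(\phi^{(i)})')$. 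Hence $(c,d)\subset\mbox{int}(A(x_i^{\tau}(\phi^{(i)})'))$, so the points $\rho_2,\dots,\rho_{N-1}$ (there are $N-2\ge 2^{m_i-1}\ge 1$ of them) lie in $(c,d)\cap\bigcup_{j}\overline{S_i^{j}(q)}\subset K_i$, contradicting $\rho_2,\dots,\rho_{N-1}\in L_i$ and $L_i\cap K_i=\emptyset$. This proves the asserted bound, and in particular $|L_i|<+\infty$.

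The main obstacle is the dichotomy in the third paragraph: it is exactly here that the construction of the $S_i^{j}(q)$ pays off, since $\overline{S_i^{j}(q)}\subset S(i)$ excludes the topologically opaque sets $Z_{s}^{2}(i)$, and without that one could not reduce $A(\bar\Gamma_{s}^{(i)})\cap[c,d]$ to the trivial cases needed by Lemma~\ref{imp}(ii). The secondary, more mechanical, difficulty is the bookkeeping in the counting step: one must verify that $3|\mathcal{G}(i)|+|H(i)|+2$ genuinely absorbs the number of markers, the number of subintervals, and the buffer of two extra $L_i$-points that guarantees $(\rho_1,\rho_N)$ itself contains a point of $L_i$ — the case $m_i=1$, where $2^{m_i-1}=1$, being the tightest and deserving a separate check.
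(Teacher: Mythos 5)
Your overall strategy is viable and genuinely different from the paper's: you pigeonhole the points of $L_i$ into the marker-free subintervals cut out by the endpoints of the $\mathcal{G}(i)$-intervals and the points of $H(i)$, and apply Lemma \ref{imp}(ii) once to a single point-rich subinterval, whereas the paper groups consecutive points of $L_i\cap\overline{S_i^{j}(q)}$ into disjoint blocks of $2^{m_i}$, applies Lemma \ref{imp} to every block, and charges each block to one of at most $3|\mathcal{G}(i)|+|H(i)|$ witnesses (an isolated zero of some $\bar\Gamma_s^{(i)}$, a whole $\mathcal{G}$-interval, or a partially overlapping one). Your dichotomy in the third paragraph is correct as argued. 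The problem is precisely the counting step you deferred (``the case $m_i=1$ \dots deserving a separate check''): for $m_i=1$ you need a marker-free subinterval containing $2^{m_i-1}+2=3$ points of $L_i$, and this does not follow from $|L_i|>2^{m_i}p_i(3|\mathcal{G}(i)|+|H(i)|+2)$. Writing $G=|\mathcal{G}(i)|$, $H=|H(i)|$ and putting everything inside one $S_i^{j_0}(q)$, your hypothesis supplies more than $2(3G+H+2)=6G+2H+4$ points of $L_i$ in $\overline{S_i^{j_0}(q)}$; but nothing prevents points of $L_i$ from sitting at the (up to $2G+H+2$) markers, and the remaining points are spread over at most $2G+H+1$ subintervals, so an allocation of two points per subinterval with all markers occupied accommodates $2(2G+H+1)+(2G+H+2)=6G+3H+4\geq 6G+2H+5$ points without any subinterval receiving three, as soon as $H\geq1$. (For $m_i\geq2$ the analogous arithmetic does go through, so the gap is confined to the tight case you flagged but did not check.)

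The gap is repairable inside your scheme, because the ``$+2$'' is an artifact of taking $[c,d]=[\rho_1,\rho_N]$: instead choose $[c,d]\subset(c',d')$ slightly larger than the convex hull of the $L_i$-points of $(c',d')$, so that all of them lie in the open interval $(c,d)$. Your dichotomy holds for any closed subinterval of $(c',d')$, Lemma \ref{imp}(ii) then gives $x_i^{\tau}(\phi^{(i)})'\equiv 0$ on $[c,d]$, and every $L_i$-point of $(c',d')$ lands in $\mbox{int}(A(x_i^{\tau}(\phi^{(i)})'))\cap\overline{S_i^{j_0}(q)}=K_i$, contradicting $L_i\cap K_i=\emptyset$ already when the subinterval carries only $2^{m_i-1}$ points; the pigeonhole for that lower threshold closes with room to spare for every $m_i$. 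With this modification your argument is a correct alternative to the paper's block-counting proof (and even yields a slightly better constant); as submitted, however, the $m_i=1$ case is a genuine missing step rather than a routine verification.
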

\begin{proof}
Let $\mathcal{Q}_{ij}\triangleq (A(x_{i}^{\tau}(\phi^{(i)})')\cap\overline{S_i^{j}(q)})\setminus K_{i}$, $j=1,\ldots,p_i$. We first show that the cardinality of each $\mathcal{Q}_{ij}$ is finite. Otherwise, for some $j\in [1,p_i]$,  there is  a monotone sequence   $\lbrace r_{l}\rbrace_{l\geq 1}$ in $\mathcal{Q}_{ij}$ such that $r_l\neq r_{l'}$ for each $l\neq l'$ and $\lim_{l\rightarrow+\infty}r_l=y^*$ for some $y^{*}\in\overline{S_i^{j}(q)}$.  Without loss of generality, let
$r_{l}<r_{l'}$ if $l>l'$.
Divide this sequence into infinite groups:
\begin{equation*}
\left\lbrace r_{2^{m_i}k+1},r_{2^{m_i}k+2},\ldots,r_{2^{m_i}(k+1)}\right\rbrace,\quad k=0,1,\ldots
\end{equation*}
and  for each $k\geq 0$,  define
\begin{equation}\label{vd}
D_{k}\triangleq{[r_{2^{m_i}(k+1)},r_{2^{m_i}k+1}]}.
\end{equation}
So, given $k\geq 0$,  $D_k\subset \overline{S_i^{j}(q)}\subset E_i$   %,  $r_{2^{m_i}k+1},r_{2^{m_i}k+2},\ldots,r_{2^{m_i}(k+1)}\in L_i$ satisfies
and $r_{2^{m_i}k+1}>\ldots>r_{2^{m_i}(k+1)}$ satisfy
\begin{equation}\label{rk}
x_i(\phi^{(i)})^{'}(r_l)=0,\quad 2^{m_i}k+1\leq l\leq 2^{m_i}(k+1).
\end{equation}
Note that the definition of $\mathcal{Q}_{ij}$ yields
$x_i(\phi^{(i)})^{'}\not\equiv 0$ on $D_k$,
 applying Lemma \ref{imp} with $[c,d]=D_k$ indicates that  there is an array $s_k\in\mathcal{H}_i$ fulfilling $A(\bar\Gamma_{s_k}^{(i)})\cap D_k\neq \emptyset$ and
  $  D_k \not\subset  A(\bar\Gamma_{s_k}^{(i)})$. Hence, at least one of  following three cases  occurs:\\
\emph{Case 1:} $Z_{s_k}^{3}(i)\cap D_k\neq \emptyset$.\\
\emph{Case 2:} There is an interval $I_{s_k}^{j}(i)\in \mathcal{G}_{s_k}(i)$ such that   $I_{s_k}^{j}(i)\subset D_k$.\\
\emph{Case 3:} There is an interval $I_{s_k}^{j}(i)\in \mathcal{G}_{s_k}(i)$ satisfying $I_{s_k}^{j}(i)\cap D_k\neq \emptyset$ and $ D_k   \not\subset    I_{s_k}^{j}(i)$.

Let $O_{l}, l=1,2,3$ present the times of
 Case $l$ that occurs for some  $k\geq 0$.  Since
$D_{k}\cap D_{k'}=\emptyset$ for $k\neq k'$,
\begin{eqnarray}\label{oo1}
%\left\{
%\begin{array}{l}
O_{1}\leq |H(i)| \quad \mbox{and}\quad
O_{2}\leq |\mathcal{G}(i)|.
%\end{array}.
%\right.
\end{eqnarray}
Furthermore, for each interval  $I_{s}^{j}(i)\in G_{s}(i)$, there exist at most two distinct $D_k$ such that $I_{s}^{j}(i)\cap D_k\neq \emptyset$ and $ D_k   \not\subset    I_{s}^{j}(i)$. Hence,
\begin{equation}\label{o1}
O_{3}\leq 2|\mathcal{G}(i)|.
\end{equation}
Combining \dref{oo1} and \dref{o1} yields
\begin{equation}\label{o123}
O_1+O_{2}+O_3\leq 3|\mathcal{G}(i)|+|H(i)|<+\infty.
\end{equation}
However,  $O_1+O_{2}+O_3=+\infty$ because $D_k$ is infinite. So, the cardinality of each $\mathcal{Q}_{ij}$ is  finite.

Now, let  $j\in[1,p_i]$ be an index  that
\begin{eqnarray}
|\mathcal{Q}_{ij}|>2^{m_i}(3|\mathcal{G}(i)|+|H(i)|+2).\nonumber
\end{eqnarray}
%Since $|\mathcal{Q}_{ij}|$ is finite, by
%\begin{equation}\label{cak}
%|\mathcal{Q}_{ij}|>2^{m_i}\cdot(3\cdot |G(i)|+|H(i)|+2),
%\end{equation}
Then, write the points of  $\mathcal{Q}_{ij}$ from left to right as $v_{0}, v_1, \ldots,v_{|\mathcal{Q}_{ij}|-1}$.
%\begin{equation*}
%v_{0},\ldots,v_{2^{m_i}h},\ldots,v_{|\mathcal{Q}_{ij}|-1},
%\end{equation*}
Define
\begin{equation}\label{hca}
h\triangleq{\left\lfloor \frac{|\mathcal{Q}_{ij}|-2}{2^{m_i}}\right\rfloor}> 3|\mathcal{G}(i)|+|H(i)|
\end{equation}
%Imitating \dref{vd}, let
and
\begin{equation*}
D_{k}^{'}\triangleq{[v_{2^{m_i}(k+1)},v_{2^{m_i}k+1}]},\quad k=0,1,\ldots,h-1.
\end{equation*}
by an analogous proof as  \dref{rk}--\dref{o123}, we  arrive at
\begin{equation*}
h\leq 3|\mathcal{G}(i)|+|H(i)|,
\end{equation*}
which arises a contradiction to \dref{hca}. Therefore, $|\mathcal{Q}_{ij}|\leq 2^{m_i}(3|\mathcal{G}(i)|+|H(i)|+2)$ and hence
$|L_{i}|\leq 2^{m_i}p_i(3 |\mathcal{G}(i)|+|H(i)|+2)$.
\end{proof}

For any $i\in[1,n]$,  $x_i\in\mathbb{R}^{m_i}$ and $\delta\in\mathbb{R}$, it is clear that  set  $\lbrace y: x^{\tau}_i\phi^{(i)}(y)> \delta\rbrace\cap (\bigcup_{j=1}^{p_i}S_i^{j}(q))$ is   open. If this set is not empty, then it is          a  countable  union of disjoint open intervals.  Denote the set of these intervals by  $\mathcal{U}_i(\delta)$.

\begin{lemma}\label{ldfb}
Let  $i\in[1,n]$. Then,  for any non-zero $x_i\in\mathbb{R}^{m_i}$     and $\delta\in\mathbb{R}$,
\begin{equation}\label{UiL}
|\mathcal{U}_i(\delta)|\leq p_i(|L_i|+2).
\end{equation}
\end{lemma}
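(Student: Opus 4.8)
The goal is to bound the number of connected components (disjoint open intervals) of the open set
$\{y : x_i^\tau \phi^{(i)}(y) > \delta\} \cap \left(\bigcup_{j=1}^{p_i} S_i^j(q)\right)$
by $p_i(|L_i|+2)$. Since there are only $p_i$ intervals $S_i^j(q)$, it suffices to show that inside each single interval $S_i^j(q)$ the set $\{y : x_i^\tau \phi^{(i)}(y) > \delta\}$ contributes at most $|L_i|+2$ components; summing over $j$ then gives the claim. So I would fix $j \in [1,p_i]$ and work entirely inside the open interval $S_i^j(q)$ (or its closure).

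Fix $j$ and let $n_j$ be the number of components of $\{x_i^\tau\phi^{(i)} > \delta\} \cap S_i^j(q)$. The key observation is that between two consecutive components one must leave the region where $x_i^\tau\phi^{(i)} > \delta$, so each ``gap'' (a maximal subinterval of $S_i^j(q)$ on which $x_i^\tau\phi^{(i)} \le \delta$, lying between two components, or the gap at either end of $S_i^j(q)$) either contains a point where $x_i^\tau\phi^{(i)} = \delta$ with the function non-constant there, in which case — because $x_i^\tau\phi^{(i)}$ is $C^{m_i}$ hence $C^1$ on $\overline{S_i^j(q)}\subset E_i$ — the derivative $x_i^\tau(\phi^{(i)})'$ must vanish somewhere strictly inside that gap (if the function genuinely dips below and comes back up, or rises to cross $\delta$ on both sides, Rolle/intermediate-value forces a critical point in the interior), or the gap is one where $x_i^\tau\phi^{(i)}$ is identically $\le \delta$ in a way that still forces $x_i^\tau(\phi^{(i)})'$ to vanish on a whole subinterval. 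The upshot is: each interior gap between consecutive components of $\{x_i^\tau\phi^{(i)}>\delta\}\cap S_i^j(q)$ contains a point of $A(x_i^\tau(\phi^{(i)})')$, and moreover a point that is \emph{not} in $\mathrm{int}(A(x_i^\tau(\phi^{(i)})'))$ — i.e.\ a point of $L_i$ — because at a crossing point the zero of the derivative is isolated-from-above-or-below by the condition that $x_i^\tau\phi^{(i)}$ exceeds $\delta$ on one side. (One must handle the degenerate case where a gap sits inside $\mathrm{int}(A(x_i^\tau(\phi^{(i)})'))$ separately: there $x_i^\tau\phi^{(i)}$ is locally affine, indeed locally constant since its derivative vanishes on an interval, so such a region cannot separate two distinct components where the value strictly exceeds $\delta$ — it would have to be a single boundary point of $L_i$ instead.) Distinct interior gaps give distinct points of $L_i \cap \overline{S_i^j(q)}$, so the number of interior gaps is at most $|L_i|$; adding the two possible boundary gaps at the endpoints of $S_i^j(q)$ gives $n_j \le |L_i| + 2$.

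Summing, $|\mathcal{U}_i(\delta)| = \sum_{j=1}^{p_i} n_j \le p_i(|L_i|+2)$, which is \eqref{UiL}.

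\textbf{Main obstacle.} The delicate point is the rigorous bookkeeping that associates to each interior gap a \emph{distinct} element of $L_i$ rather than merely a zero of $x_i^\tau(\phi^{(i)})'$. The naive argument only yields a zero of the derivative, and these zeros could a priori fill whole intervals (that is exactly why $L_i$, rather than all of $A(x_i^\tau(\phi^{(i)})')$, appears in the bound, and why Lemma~\ref{sdf} was proved first). One has to argue that in a gap that genuinely separates two components with value $>\delta$, the function $x_i^\tau\phi^{(i)}$ is \emph{not} locally constant, hence the relevant zero of its derivative is a boundary point of $A(x_i^\tau(\phi^{(i)})')$ — an element of $L_i$ — and that different gaps cannot be charged to the same such point. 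Carefully enumerating the cases (a gap where the function dips strictly below $\delta$; a gap where it just touches $\delta$; a gap lying in the constant locus) and checking the charging is injective is the real content; everything else is Rolle's theorem and counting.
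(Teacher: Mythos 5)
Your plan runs on the same engine as the paper's proof---decompose over the $p_i$ intervals $S_i^j(q)$, apply Rolle's theorem, and charge each counted object to a distinct point of the finite set $L_i$---but you charge the \emph{gaps} between consecutive components, whereas the paper charges the components themselves, and the latter bookkeeping is noticeably cleaner. In the paper, for a component $I$ of $\mathcal{U}_i(\delta)$ whose endpoints do not meet $\partial(S_i^j(q))$, both endpoints satisfy $x_i^{\tau}\phi^{(i)}=\delta$, so Rolle gives a zero of $x_i^{\tau}(\phi^{(i)})'$ inside $I$; since $x_i^{\tau}\phi^{(i)}>\delta$ throughout $I$, the function cannot be constant on $I$, so the zero set of the derivative cannot exhaust $I$, and hence $I$ contains a point of $A(x_i^{\tau}(\phi^{(i)})')\setminus \mathrm{int}(A(x_i^{\tau}(\phi^{(i)})'))$, i.e.\ of $L_i$. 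Disjointness of the components makes this charge injective, at most two components per $S_i^j(q)$ can touch $\partial(S_i^j(q))$, and the bound $|L_i|+2$ per interval follows with no degenerate case and no need to order the components or know a priori that there are finitely many. Your gap-charging forces exactly the complication the paper avoids: on a gap you only know $x_i^{\tau}\phi^{(i)}\le\delta$, and your treatment of the plateau case is wrong as stated---a $C^{m_i}$ function can be identically equal to $\delta$ on an interval lying between two components on which it strictly exceeds $\delta$, so such a region \emph{can} separate two components. The repair is the one you gesture at: the endpoints of such a plateau are zeros of the derivative that are not interior to its zero set (the function is non-constant on the adjacent component side), hence belong to $L_i$, and the closed gaps are pairwise disjoint, so the charge remains injective; one should also phrase the count via arbitrary finite subfamilies of components so that finiteness is a conclusion rather than an assumption. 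With those fixes your route gives \eqref{UiL}, but the paper's component-charging reaches it with strictly less case analysis.
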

\begin{proof}
 Denote
\begin{equation*}
\mathcal{K}_{ij}\triangleq{\lbrace I\in \mathcal{U}_i(\delta): I\cap S_i^{j}(q)\neq\emptyset \rbrace},\quad \forall j\in[1,p_i],
\end{equation*}
then
\begin{equation}\label{pca}
|\mathcal{U}_i(\delta)|\leq \sum_{j=1}^{p_i} |\mathcal{K}_{ij}|.
\end{equation}

Fix an index $j\in[1,p_i]$ and $I\in \mathcal{K}_{ij}$. By the continuity of $\phi^{(i)}$ in $S_i^{j}(q)$, each
 endpoint of $I$ either  belongs to the zero set $A(x^{\tau}_i\phi^{(i)}(y)-\delta)$ or is an endpoint of  $S_i^{j}(q)$. If $\partial(I)\cap\partial(S_i^{j}(q))=\emptyset$, then $\partial(I)\in A(x^{\tau}_i\phi^{(i)}(y)-\delta)$.  By the \textit{Rolle's theorem}, it follows that $\lbrace y: x^{\tau}_i(\phi^{(i)})'(y)=0\rbrace\cap I\not=\emptyset$, which together with $I\subset\lbrace y: x^{\tau}_i\phi^{(i)}(y)> \delta\rbrace$ leads to $L_i\cap I\not=\emptyset$. Note that there are at most two intervals $I
 \in \mathcal{K}_{ij}$ satisfying $\partial(I)\cap \partial(S_i^{j}(q))\not=\emptyset$ and any two intervals in $\mathcal{K}_{ij}$ are disjoint, so
\begin{eqnarray}\label{kl2}
|\mathcal{K}_{ij}|\leq |L_i|+2.
\end{eqnarray}
Finally, \dref{UiL} is an immediate result of  \dref{pca} and \dref{kl2}.
%\begin{eqnarray}
%|\mathcal{U}_i|\leq p_i(|L_i|+2).\nonumber
%\end{eqnarray}
\end{proof}

Given a closed box $O=\prod_{i=1}^{n}I_{i}\in\mathbb{R}^{n}$ and a positive integer $r$, equally divide each $I_i$ into $r$ closed intervals  $\lbrace I_{i,j}\rbrace_{j=1}^{r}$ that $\mbox{int}(I_{i,j})\cap \mbox{int}(I_{i,j'})=\emptyset  $ if $j\neq j'$. So, there are
$r^{n}$  small closed   boxes $\prod_{i=1}^{n}\lbrace I_{i,j}\rbrace _{j=1}^{r}$. Let $\mathcal{T}(O,r)$ be the set of the $r^n$ small boxes.   Clearly, for any distinct boxes $U, U'\in  \mathcal{T}(O,r)$,   $ \mbox{int}(U)\cap \mbox{int}(U') =\emptyset  $. Define
\begin{eqnarray}\label{kdef}
\mathcal{T}_\d(O,r)  \triangleq \left\{ U\in \mathcal{T}(O,r): \mathcal{B}(\delta)  \cap \overline {\mathcal{S} } \cap U \neq \emptyset \right   \},
\end{eqnarray}
 where $\mathcal{B}(\delta)\triangleq \partial(\lbrace y: \phi^{\tau}(y)x> \delta\rbrace)$ and $\mathcal{S}$ is defined in Lemma \ref{qq}. Let $\mathcal{K}_{\delta}(O,x,r)\triangleq |\mathcal{T}_\d(O,r)|$.
 The following lemma is critical to our result.
\begin{lemma}\label{xy}
There is a constant $C>0$ such that for any closed box $O=\prod_{i=1}^{n}I_{i}$, non-zero vector $x\in\mathbb{R}^{m}$, $\delta\in\mathbb{R}$ and integer $r\geq 1$,
\begin{eqnarray}\label{kox}
\mathcal{K}_{\delta}(O,x,r)\leq Cr^{n-1}.
\end{eqnarray}
\end{lemma}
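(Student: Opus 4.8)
The plan is as follows. By homogeneity one may assume $\|x\|=1$, since replacing $x$ by $x/\|x\|$ merely rescales $\delta$ and transforms $\mathcal{B}(\delta)$, $\mathcal{T}_{\delta}(O,r)$ and $\mathcal{K}_{\delta}(O,x,r)$ accordingly. Using the block form \dref{sysphi}, write $x=\mathrm{col}\{x_1,\dots,x_n\}$ with $x_i\in\mathbb{R}^{m_i}$ and put $g_i(t)\triangleq x_i^{\tau}\phi^{(i)}(t)$, so that $\phi^{\tau}(y)x=\sum_{i=1}^{n}g_i(y_i)=:F(y)$ and, by the continuity in Assumption A2(i), $\mathcal{B}(\delta)=\partial\{F>\delta\}\subseteq\{F=\delta\}$. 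Since $\overline{S_i^{j}(q)}\subset S_i^{j}\subseteq E_i$ by \dref{Sijq} and Lemma \ref{qq}, each $g_i$ is $C^{1}$ on a neighbourhood of the compact set $\bigcup_{j}\overline{S_i^{j}(q)}$, and $\overline{\mathcal{S}}$ is a union of $\prod_{i}p_i$ closed boxes. I would establish \dref{kox} through two ingredients: (a) a decomposition of $\overline{\mathcal{S}}$ into a bounded number of closed boxes — the bound uniform over unit $x$ — on each of which $F$ is monotone in every coordinate; and (b) an elementary count showing that the boundary of a coordinatewise monotone set meets at most $O(r^{n-1})$ cells of $\mathcal{T}(O,r)$.

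\textbf{Ingredient (a).} Fix $i$ with $x_i\neq 0$. Here $g_i'=x_i^{\tau}(\phi^{(i)})'$ is continuous, and by Lemma \ref{sdf} its zero set in $\bigcup_{j}\overline{S_i^{j}(q)}$ equals $K_i\cup L_i$, where $K_i$ is relatively open and $|L_i|\le 2^{m_i}p_i(3|\mathcal{G}(i)|+|H(i)|+2)$ — a bound not depending on $x_i$, since $|\mathcal{G}(i)|$ and $|H(i)|$ are finite by \dref{card} and depend only on the $f_{ij}$. I would then argue that $g_i$ is constant on each component of $K_i$; that $K_i$ has at most $\tfrac12(|L_i|+2p_i)$ components (each endpoint of a component lying in $L_i$ or being an endpoint of some $\overline{S_i^{j}(q)}$); and that on the finitely many intervals left after deleting $\overline{K_i}$ and $L_i$ the continuous function $g_i'$ is nowhere zero, hence of one sign, so $g_i$ is strictly monotone there. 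This yields a partition of $\bigcup_{j}\overline{S_i^{j}(q)}$ into at most $M_i$ closed intervals of monotonicity of $g_i$, with $M_i$ depending only on $\phi$, $q$ and the $S_i^{j}$ (and trivially $M_i=1$ if $x_i=0$). Consequently $\overline{\mathcal{S}}$ splits into at most $M\triangleq\prod_iM_i$ closed boxes $R=\prod_iR_i$ on each of which every $g_i$ — hence $F$ — is coordinatewise monotone; after reflecting the coordinates along which $g_i$ decreases on $R_i$, an affine change permuting the cells of $\mathcal{T}(O,r)$, the set $\{F>\delta\}\cap R$ becomes an up-set in $R$.

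\textbf{Ingredient (b).} I claim there is a constant $C_n$ depending only on $n$ such that, for every up-set $A$ contained in a closed box $R\subseteq O$, the boundary $\partial A$ meets at most $C_n r^{n-1}$ cells of $\mathcal{T}(O,r)$. To prove this I would introduce, on the projection of $R$ onto the first $n-1$ coordinates, the boundary-height function $\beta(y')\triangleq\inf\{t:(y',t)\in A\}$ (read as the top of $R$ on empty fibres), which is non-increasing in each variable, together with the inclusion $\partial A\subseteq\{(y',t):\liminf_{z\to y'}\beta(z)\le t\le\limsup_{z\to y'}\beta(z)\}$. Over each column $Q$ (a cell of $\mathcal{T}(O,r)$ in the first $n-1$ coordinates) the trace of $\partial A$ thus has $t$-extent at most $\mathrm{osc}_{\overline Q}(\beta)$ and so meets $O(1)+r\,\mathrm{osc}_{\overline Q}(\beta)/|I_n|$ of the $\le r+1$ cells above $Q$. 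Summing over the $\le(r+1)^{n-1}$ relevant columns, together with the telescoping estimate $\sum_{Q}\mathrm{osc}_{\overline Q}(\beta)\le(n-1)|I_n|(r+1)^{n-2}$ — obtained by writing each oscillation as a sum of $n-1$ one-coordinate increments of the monotone $\beta$ and telescoping each in turn — delivers the bound.

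\textbf{Conclusion and the hard part.} Applying (b) to each of the $\le M$ boxes $R$ from (a) — on $\mathrm{int}(R)$ the set $\mathcal{B}(\delta)\cap R$ coincides with the boundary of the up-set $\{F>\delta\}\cap R$, and the part of $\mathcal{B}(\delta)\cap R$ lying on $\partial R$ meets only $O(r^{n-1})$ further cells — and summing over $R$ gives $\mathcal{K}_{\delta}(O,x,r)\le Cr^{n-1}$ with $C$ independent of $O$, $x$, $\delta$ and $r$, which is \dref{kox}. The routine step is (b), a telescoping count. The hard part will be (a): squeezing out of the operators $\Lambda_l$, the sets $Z_s^{k}(i)$ and the finiteness statements \dref{card} and Lemma \ref{sdf} a bound on the number of monotonicity intervals of $g_i=x_i^{\tau}\phi^{(i)}$ that is \emph{uniform over all unit vectors} $x$ — the delicate points being that $g_i$ may possess flat pieces as well as isolated critical zeros of $g_i'$, and that none of the intermediate constants may be allowed to depend on $x$.
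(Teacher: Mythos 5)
Your proposal is correct, but it proves \dref{kox} by a genuinely different route than the paper. The paper argues by induction on the dimension $n$: the base case $n=1$ comes from Lemma \ref{ldfb}, and the inductive step slices $O$ into the columns $\mathcal{T}^1\times\mathcal{T}^2$, writes $\mathcal{K}_\delta(O,x,r)=\sum_{B}|\mathcal{Z}_1(B)|$, and controls the integral term and the boundary term $\sum_B|\mathcal{Z}_2(B)|$ through the exceptional families $\mathcal{T}^3,\mathcal{T}^4$ and Lemmas \ref{ap1}--\ref{ap3} of Appendix \ref{AppA}, which in turn rest on the sets $K_i,L_i,\mathcal{G}(i),H(i)$. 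You avoid the induction entirely: your ingredient (a) uses exactly the same structural input (Lemma \ref{sdf} together with \dref{card}) to split each $\bigcup_j\overline{S_i^j(q)}$ into a number of closed intervals of monotonicity of $g_i=x_i^\tau\phi^{(i)}$ that is bounded uniformly in $x$ — the endpoint bookkeeping via $L_i$ and the box endpoints is sound, the flat pieces being the components of $K_i$ on which $g_i$ is constant — and hence covers $\overline{\mathcal{S}}$ by boundedly many boxes on which $\phi^\tau(\cdot)x$ is coordinatewise monotone; your ingredient (b) is then a self-contained deterministic counting lemma for boundaries of up-sets, proved by the telescoping oscillation estimate for the threshold function $\beta$, which correctly yields $O(r^{n-1})$ even across jump discontinuities of $\beta$. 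What each approach buys: the paper's induction handles the interaction of the flat set $K_i$ and the critical set $L_i$ one coordinate at a time inside the recursion, while your reorganization isolates all combinatorics into the clean monotone-set lemma at the price of establishing the uniform-in-$x$ bound on monotonicity intervals up front — which you rightly flag as the crux and correctly extract from Lemma \ref{sdf}. Two small points to tighten when writing it out: for $y'$ on the edge of a column $Q$ the $t$-extent of $\partial A$ is controlled by the oscillation of $\beta$ over $\overline{Q}$ together with its adjacent columns (a bounded-overlap enlargement, so the telescoping bound only picks up a dimensional constant), and to keep the constant independent of $O$ you should truncate $\beta$ to $I_n$ (equivalently count only cells of $\mathcal{T}(O,r)$), as your cap by $|I_n|$ already suggests; also the count of components of $K_i$ is $|L_i|+2p_i$ rather than half that, which is immaterial.
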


\begin{proof}
We prove \dref{kox} by   induction. For $n=1$, let $O=I_1$ be a closed box.  By Lemma \ref{ldfb} with $n=1$, it is easy to check that
\begin{eqnarray}\label{bd2}
\left|\mathcal{B}(\delta)\cap\left( \bigcup_{j=1}^{p_1}S_1^{j}(q)\right)\right|\leq 2p_1(|L_1|+2).
\end{eqnarray}
Moreover, since
\begin{eqnarray*}
\mathcal{B}(\delta)\cap \left(\bigcup_{j=1}^{p_1}\overline{S_1^{j}(q)}\right)
\subset \mathcal{B}(\delta)\cap \left(\bigcup_{j=1}^{p_1}S_1^{j}(q)\right)\cup\partial\left(\bigcup_{j=1}^{p_1}S_1^{j}(q)\right),
\end{eqnarray*}
it follows that
\begin{eqnarray*}
\mathcal{K}_{\delta}(O,x,r)\leq 2\left|\mathcal{B}(\delta)\cap\left( \bigcup_{j=1}^{p_1}\overline{S_1^{j}(q)}\right)\right|
\leq 4p_1(|L_1|+2)+4p_1.
\end{eqnarray*}
Hence, \dref{kox} is true for $n=1$ by taking $C=4p_1(|L_1|+2)+4p_1$.

Now, suppose  \dref{kox} holds for $n=k$ with some $ k\geq 1$. Let us consider the case where $n=k+1$.
Take a closed box $O=\prod_{i=1}^{k+1}I_{i}\in\mathbb{R}^{k+1}$, and let $\mathcal{T}(O,r)$ be the set of the
$r^{k+1}$ disjoint refined  boxes.  These boxes correspond to two sets
$$
\mathcal{T}^1=\prod_{i=1}^{k}\lbrace I_{i,j}\rbrace_{j=1}^{r}\quad \mbox{and}\quad \mathcal{T}^2=\lbrace I_{k+1,j}\rbrace_{j=1}^{r}.
$$

Write vector $x=\mbox{col}\lbrace x_1,\ldots,x_{k+1}\rbrace\not=\textbf{0}$. First,  assume  there is an index $l\in [1,k+1]$ such that $x_l=\textbf{0}$. Without loss of generality, let $l=k+1$, then
\begin{eqnarray}\label{pak}
&&\mathcal{B}(\delta)\cap\prod_{i=1}^{k+1}\bigcup_{j=1}^{p_i}\overline{S_i^{j}(q)}\cap O \subset \nonumber\\
&&\bigg(\partial\bigg(\bigg\lbrace z\in\mathbb{R}^{k}: \sum_{i=1}^{k}x_{i}\phi^{(i)}(z_i)> \delta\bigg\rbrace\bigg)\cap \prod_{i=1}^{k}\bigcup_{j=1}^{p_i}\overline{S_i^{j}(q)}\cap\prod_{i=1}^{k}I_i\bigg)\times I_{k+1}.
\end{eqnarray}
where
\begin{eqnarray}\label{zdef}
z=(z_1,\ldots,z_{k})^{\tau}\in \mathbb{R}^k.
\end{eqnarray}
By applying the induction assumption  for $n=k$ and for  the refined boxes in $\mathcal{T}^1$, there is a constant $C>0$ such that
 $$\mathcal{K}_\d\left(\prod_{i=1}^{k}I_{i}, \mbox{col}\lbrace x_1,\ldots,x_{k}\rbrace, r \right)\leq Cr^{k-1},$$
 which, together with \dref{pak} and $\mathcal{T}(O,a)=\mathcal{T}^1\times \mathcal{T}^2$, yields $\mathcal{K}_\d(O,x, r)\leq Cr^{k}$. This is exactly \dref{kox} for $n=k+1$.

So, let $x_{i}\not=\textbf{0}$ for all $i\in[1,k+1]$. For any $B\in \mathcal{T}^1$, define set
\begin{eqnarray}
Z(B)\triangleq\left\lbrace z_{k+1}\in I_{k+1}: (B\times z_{k+1})\cap \mathcal{B}(\delta)\cap \prod_{i=1}^{k+1}\bigcup_{j=1}^{p_i}\overline{S_i^{j}(q)}\not=\emptyset \right \rbrace.\nonumber
\end{eqnarray}
Observe that $Z(B)$ is a closed set, then $\partial{Z(B)}\subset Z(B)$. Define
\begin{eqnarray*}
\left\{
\begin{array}{l}
\mathcal{Z}_1(B)\triangleq \{I_{k+1,j}\in \mathcal{T}^2: Z(B)\cap I_{k+1,j}\not=\emptyset\}\\
\mathcal{Z}_2(B)\triangleq \{I_{k+1,j}\in \mathcal{T}^2: \partial{Z(B)}\cap I_{k+1,j}\not=\emptyset\}
\end{array}.
\right.
\end{eqnarray*}
Since any interval in $\mathcal{Z}_{1}(B)\setminus \mathcal{Z}_2(B)$  must be contained in $Z(B)$,
\begin{eqnarray*}
|\mathcal{Z}_{1}(B)|-|\mathcal{Z}_2(B)|=|\mathcal{Z}_{1}(B)\setminus \mathcal{Z}_2(B)|\leq \frac{r}{|I_{k+1}|}\ell(Z(B)).
\end{eqnarray*}
At the same time,
\begin{eqnarray*}
\sum_{B\in \mathcal{T}^{1}}\ell(Z(B))
&=&\sum_{B\in \mathcal{T}^{1}}\int_{\mathbb{R}}I_{Z(B)}dz_{k+1}%=\int_{\mathbb{R}}\sum_{B\in \mathcal{T}^{1}}I_{Z(B)}dz_{k+1}
\nonumber\\
&=& \int_{I_{k+1}}\sum_{B\in \mathcal{T}^{1}}I_{Z(B)}dz_{k+1},
\end{eqnarray*}
therefore
\begin{eqnarray}\label{gj1}
\mathcal{K}_\d(O,x,r)&=&\sum_{B\in \mathcal{T}^{1}}|\mathcal{Z}_{1}(B)|\nonumber\\
&\leq &\frac{r}{|I_{k+1}|}\int_{I_{k+1}}\sum_{B\in \mathcal{T}^{1}}I_{Z(B)}dz_{k+1}+\sum_{B\in \mathcal{T}^{1}}|\mathcal{Z}_{2}(B)|.
\end{eqnarray}
The last step is to  estimate  the  term in \dref{gj1}. Since the argument is involved, it is included in  Appendix \ref{AppA}.
In light of
 Lemmas \ref{ap2} and \ref{ap3}, when $n=k+1$, there are two constants $C_1, C_2>0$ depending only on $\phi$ such that
 \begin{eqnarray}
\mathcal{K}_{\delta}(O,x,r)\leq \left(C_1+C_2\right) r^{k}.\nonumber
\end{eqnarray}
%\begin{eqnarray}
%\mathcal{K}_{\delta}(O,x,r)\leq \left(C_1+C_2\right)\cdot r^{k},\nonumber
%\end{eqnarray}
The proof is thus completed.
\end{proof}

Now, recall the definition of $U_x$ in the end of Section \ref{cu},
\begin{eqnarray}
\partial(U_x)\subset((\partial(\lbrace y: \phi^{\tau}(y)x> \delta^{*}\rbrace)\cup \partial(\lbrace y: -\phi^{\tau}(y)x> \delta^{*}\rbrace))\cap \mathcal{S})\cup \partial(\mathcal{S}).\nonumber
\end{eqnarray}
Given a closed box $O$ and an integer $r\geq 1$, observe that
$$
|\lbrace U\in \mathcal{T}(O,r): \partial(\mathcal{S})\cap U\not=\emptyset\rbrace|\leq 4r^{n-1}\sum_{i=1}^{n}p_{i}.
$$
In addition,  by applying Lemma \ref{xy}
it follows that there is a constant $C_0>0$  depending only on  $\phi$ such that
\begin{eqnarray}\label{uor}
|\lbrace U\in \mathcal{T}(O,r): \partial(U_x)\cap U\not=\emptyset\rbrace|\leq C_0r^{n-1}.
\end{eqnarray}

\subsection{\textbf{The Estimation of  Minimal Eigenvalue}}\label{33}
In the start stage of this section, we state a key lemma which is modified from \cite{lilam13}.
Now, for the set $U_x$ we have constructed, define a random process $g_{x}$ by
\begin{equation*}
g_{x}(i)\triangleq{I_{\lbrace Y_i\in U_x\rbrace}-P(Y_i\in U_x|\mathcal{F}_{i-1}^{y})},\quad i\geq 1,
\end{equation*}
where $Y_{i}\triangleq (y_{i+n-1},\ldots,y_{i})^{\tau}$ and $\mathcal{F}_{i-1}^{y}\triangleq \sigma\lbrace \theta, y_0,\ldots,y_{i-1}\rbrace$.
\begin{lemma}\label{gxwy}
For any $\epsilon>0$, there is a class $\mathcal{G}_{\epsilon}$ such that\\
(i) each element of $\mathcal{G}_{\epsilon}$, denoted by $g_{\epsilon}$, is a random series $\lbrace g_{\epsilon}(i)\rbrace_{i\geq 1}$ with the form
\begin{equation}\label{giwy}
g_{\epsilon}(i)=I_{\lbrace Y_i\in U_{\epsilon}\rbrace}-P( Y_i\in U_{\epsilon}|\mathcal{F}_{i-1}^y)-\epsilon,\quad i\geq 1,
\end{equation}
where $U_{\epsilon}$ is a set in $\mathbb{R}^n$;\\
(ii) $\mathcal{G}_{\epsilon}$ contains a lower process $g_{\epsilon}$ to each $g_{x}$ in the sense that
\begin{equation}\label{giiwy}
g_{\epsilon}(i)\leq g_x(i)\quad \forall i\geq 1.
\end{equation}
\end{lemma}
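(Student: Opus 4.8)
The plan is to construct the family $\mathcal{G}_\epsilon$ explicitly by discretizing the construction of $U_x$ through the boxes $\mathcal{T}(O,r)$ and then taking a limit as $r\to+\infty$. Fix $\epsilon>0$. Since $U_x\subset\mathcal{S}$ and $\overline{\mathcal{S}}$ is a bounded subset of $\prod_{i=1}^n E_i$ by Lemma~\ref{qq}, we may enclose $\overline{\mathcal{S}}$ in a large closed box $O$. For an integer $r\geq 1$, let $\mathcal{T}(O,r)$ be the refinement of $O$ into $r^n$ small boxes, and for a unit vector $x$ define the inner approximation
$$U_x^{(r)}\triangleq\bigcup\left\{\mathrm{int}(U):U\in\mathcal{T}(O,r),\ U\subset U_x\right\},$$
the union of the interiors of those small boxes entirely contained in $U_x$. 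Then $U_x^{(r)}$ is open, $U_x^{(r)}\subset U_x$, and since $U_x$ is open, every point of $U_x$ at positive distance from $\partial(U_x)$ eventually lies in $U_x^{(r)}$; hence $U_x^{(r)}\uparrow U_x\setminus\partial(U_x)$, and in particular $\ell(U_x\setminus U_x^{(r)})\to 0$ as $r\to+\infty$ because $\partial(U_x)$ has Lebesgue measure zero (which follows from \eqref{uor}, since $\partial(U_x)$ is covered by $O_0 r^{n-1}$ boxes of volume $\ell(O)r^{-n}$, so $\ell(\partial(U_x))\leq C_0\ell(O)/r\to 0$).

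The point of using box-unions is a uniform-continuity / uniform-measure control that does not depend on $x$. Concretely, I would like to show that for every $\epsilon>0$ there is an $r=r(\epsilon)$ and, for each unit vector $x$, a set $U_\epsilon=U_x^{(r)}$ such that
$$P\big(Y_i\in U_x\ \big|\ \mathcal{F}_{i-1}^y\big)-P\big(Y_i\in U_\epsilon\ \big|\ \mathcal{F}_{i-1}^y\big)\leq\epsilon\qquad\text{for all }i\geq 1,\ \text{a.s.}$$
Granting this, set $g_\epsilon(i)\triangleq I_{\{Y_i\in U_\epsilon\}}-P(Y_i\in U_\epsilon\mid\mathcal{F}_{i-1}^y)-\epsilon$; then because $U_\epsilon\subset U_x$ we have $I_{\{Y_i\in U_\epsilon\}}\leq I_{\{Y_i\in U_x\}}$, and the displayed inequality gives $-P(Y_i\in U_\epsilon\mid\mathcal{F}_{i-1}^y)-\epsilon\leq -P(Y_i\in U_x\mid\mathcal{F}_{i-1}^y)$, so $g_\epsilon(i)\leq g_x(i)$ for all $i$, which is \eqref{giiwy}; and $g_\epsilon$ has exactly the form \eqref{giwy}. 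The class $\mathcal{G}_\epsilon$ is then $\{g_{\epsilon}:x\in\mathbb{R}^m,\ \|x\|=1\}$ for this fixed $r(\epsilon)$. (If one wishes $\mathcal{G}_\epsilon$ to be a fixed finite or small class rather than indexed by all unit $x$, one uses a further compactness reduction over the unit sphere in $\mathbb{R}^m$, as was already done repeatedly, e.g.\ in Lemmas~\ref{useful} and \ref{inf>0}.)

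The conditional-probability bound reduces to a uniform statement about the transition kernel of the chain. Write $Y_i=(y_{i+n-1},\dots,y_i)^\tau$; conditionally on $\mathcal{F}_{i-1}^y$ (equivalently on $\theta$ and $Y_{i-1}$), the coordinates $y_i,\dots,y_{i+n-1}$ are generated one at a time by \eqref{sys}, each new coordinate being a shift of the previous state by $\theta^\tau\phi(\cdot)$ plus an independent noise $w$ whose density $\rho$ is bounded above by Assumption~A1'. Therefore the conditional law of $Y_i$ given $\mathcal{F}_{i-1}^y$ has a density with respect to Lebesgue measure on $\mathbb{R}^n$ that is bounded by $(\sup_x\rho(x))^n$, uniformly in $i$ and in the conditioning. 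Consequently, for any measurable $A\subset B\subset\mathbb{R}^n$,
$$P\big(Y_i\in B\mid\mathcal{F}_{i-1}^y\big)-P\big(Y_i\in A\mid\mathcal{F}_{i-1}^y\big)\leq\big(\sup_{x}\rho(x)\big)^n\,\ell(B\setminus A).$$
Applying this with $B=U_x$, $A=U_x^{(r)}$ and using $\ell(U_x\setminus U_x^{(r)})\leq C_0\ell(O)/r$ from the box count \eqref{uor}, we get the desired inequality once $r\geq C_0\ell(O)(\sup_x\rho)^n/\epsilon$; crucially this threshold is independent of $x$, because the constant $C_0$ in \eqref{uor} depends only on $\phi$.

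The main obstacle is the uniformity in $x$: the sets $U_x$ vary with the direction $x$, and a naive discretization would give a box count (hence a measure of the boundary layer) depending on $x$ in an uncontrolled way. This is precisely what Lemma~\ref{xy} and the resulting estimate \eqref{uor} are for — they give a bound $C_0 r^{n-1}$ on the number of refined boxes meeting $\partial(U_x)$ with $C_0$ depending only on $\phi$ — so the real work has already been done, and what remains is the (routine, but needing care) passage from that box count to an $x$-uniform control of $\ell(U_x\setminus U_x^{(r)})$ and then, via the bounded transition density, to the conditional-probability estimate. A secondary point to handle carefully is the Bayesian case, where $\theta$ is random: there the conditioning $\sigma$-field $\mathcal{F}_{i-1}^y$ already includes $\theta$, so the bounded-density argument for the noise still applies verbatim, and no separate treatment is needed.
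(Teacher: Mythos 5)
Your proposal is correct and follows essentially the same route as the paper: approximate $U_x$ from inside by a union of boxes from the fixed grid $\mathcal{T}(O,r)$, use the count \dref{uor} (with $C_0$ depending only on $\phi$) to bound the volume of the discarded boundary layer by $C_0\ell(O)/r$ uniformly in $x$, and convert this to a conditional-probability bound via the measure-preserving shift structure of Lemma \ref{inf>0}(i) together with $\sup_x\rho(x)<\infty$ from A1'. The only cosmetic difference is that your class is indexed by $x$ while the paper takes all box-unions from the grid; since your $U_x^{(r)}$ are themselves unions of grid boxes, the class is automatically finite and no extra compactness reduction over the unit sphere is needed.
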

\begin{proof}
(i) Let $O$ be a closed box contains $\mathcal{S}$. Let $r$ be an integer such that
\begin{eqnarray}
r>2\varepsilon^{-1}\overline{\rho}^nC_0\cdot\ell(O),
\end{eqnarray}
where $C_0$ is defined in \dref{uor} and $\overline{\rho}\triangleq \sup_{x\in\mathbb{R}}\rho(x)$. Let $U_{\epsilon}$ be a union of some boxes taken from  $\mathcal{T}(O,r)$. Hence, for a fixed $U_{\epsilon}$, we can define a random process $g_{\epsilon}$ by \dref{giwy}. Denote $\mathcal{G}_{\epsilon}$ as the class of all such $g_{\epsilon}$.

(ii)
Note that for every $x\in \mathbb{R}^m$ with  $\|x\|=1$, $ U_{x}$ is bounded. Then,
there is a set $U_{\epsilon}\in \mathbb{R}^n$ such that $U_{\epsilon}\subset U_{x}$ and $\Delta U_{\epsilon,x}\triangleq{U_x-U_{\epsilon}}$ falls into a union  of finite boxes $J_1,\ldots,J_{l}\in\lbrace U\in \mathcal{T}(O,r): \partial(U_x)\cap U\not=\emptyset\rbrace$. By \dref{uor}, we obtain
\begin{eqnarray}\label{jc1}
\sum_{k=1}^{l}\ell(J_{k})&=& l\cdot \frac{\ell(O)}{r^{n}}\leq C_0\cdot \ell(O)\cdot \frac{1}{r}<\frac{1}{2\overline{\rho}^n}\epsilon.
\end{eqnarray}
We now calculate $P(Y_t\in \Delta U_{\epsilon,x}|\mathcal{F}_{t-1}^y)$. By \dref{jc1},  Lemma \ref{inf>0}(i) and Assumption A1', it is easy to see
\begin{eqnarray*}\label{eq1wy}
&&P(Y_t\in \Delta U_{\epsilon,x}|\mathcal{F}_{t-1}^y)\leq P(Y_t\in \bigcup_{k=1}^{l}J_{k}|\mathcal{F}_{t-1}^y)\nonumber\\
%&=&\int_{Y_{t}\in \cup_{k=1}^{l}J_{k}}\prod_{k=t}^{t+n-1}\rho(w_{k})\,dw_{t}\ldots dw_{t+n-1}\nonumber\\
&\leq &\ell(\lbrace (w_{t+n-1},\ldots,w_{t})^{\tau}:  Y_{t}\in \bigcup_{k=1}^{l}J_{k}\rbrace)\cdot \overline{\rho}^n\nonumber\\
&=&\ell\left(\bigcup_{k=1}^{l}J_{k}\right)\cdot \overline{\rho}^n<\frac{\epsilon}{2}.
\end{eqnarray*}
So, for any $i\geq 1$,
\begin{eqnarray*}
g_{x}(i)&=&I_{\lbrace Y_i\in U_{x}\rbrace}-P( Y_i\in U_{x}|\mathcal{F}_{i-1}^{y})\nonumber\\
&=&I_{\lbrace Y_i\in U_{x}\rbrace}-P( Y_i\in U_{\epsilon}|\mathcal{F}_{i-1}^{y})-P(Y_i\in\Delta U_{\epsilon,x}|\mathcal{F}_{i-1}^{y})\nonumber\\
&\geq & I_{\lbrace Y_i\in U_{\epsilon}\rbrace}-P( Y_i\in U_{\epsilon}|\mathcal{F}_{i-1}^{y})-\epsilon=g_{\epsilon}(i),
\end{eqnarray*}
which is  exactly \dref{giiwy}.
\end{proof}

\begin{proof}[Proof of Proposition \ref{cruc}]
%As referred in  Lemma \ref{inf>0} (ii), for the $\delta^{*}$ form the definition of  $U_x$, we have
%\begin{equation*}
%\inf_{\|x\|=1, \|y\|\leq M, \|\theta\|\leq K}\ell\left(\lbrace \varsigma: |\phi^{\tau}(g(\varsigma,y,\theta))x|> \delta^{*}, g(\varsigma,y,\theta)\in \mathcal{S}\rbrace\right)>0.
%\end{equation*}
%Then
First, recall the definition of $U_x$,
for any $x\in\mathbb{R}^{m}$ with $\|x\|=1$, Lemma \ref{inf>0}(ii) and Assumption A1' yield
\begin{eqnarray}\label{pk1}
&&P(Y_{i}\in U_x|\mathcal{F}_{i-1}^{y})I_{\{\|Y_{i-n}\|\leq M, \|\theta\|\leq K\}}\nonumber\\
&= & P(Y_{i}\in \lbrace y: |\phi^{\tau}(y)x|> \delta^{*}\rbrace\cap \mathcal{S}|\mathcal{F}_{i-1}^{y})I_{\{\|Y_{i-n}\|\leq M, \|\theta\|\leq K\}}\nonumber\\
%&\geq &\int_{Y_{i}\in \lbrace y: |\phi^{\tau}(y)x|> \delta^{*}\rbrace\cap \prod_{k=1}^{n}\bigcup_{j=1}^{p_{k}}S_{k}^{j}(q)}\prod_{k=i}^{i+n-1}\rho(w_{k})\,dw_{i}\ldots dw_{i+n-1}\nonumber\\
&\geq &\inf_{\|x\|=1, \|y\|\leq M, \|z\|\leq K}\ell\left(\lbrace \varsigma: |\phi^{\tau}(g(\varsigma,y,z))x|> \delta^{*}, g(\varsigma,y,z)\in \mathcal{S}\rbrace\right)\nonumber\\
&&\cdot \left(\inf_{s\in[-S', S']}\rho(s)\right)^nI_{\{\|Y_{i-n}\|\leq M, \|\theta\|\leq K\}}\triangleq  C_PI_{\{\|Y_{i-n}\|\leq M, \|\theta\|\leq K\}},
\end{eqnarray}
where
\begin{eqnarray*}
S'=K\sup_{\|y\|\leq M+R'}\|\phi(y)\|+R'\quad \mbox{and}\quad R'\triangleq \max_{1\leq i\leq n}\mbox{dist}\left(0,\bigcup_{j=1}^{p_i}S_i^{j}(q)\right).
\end{eqnarray*}
%here $R'\triangleq \max_{1\leq i\leq n}\mbox{dist}\left(\bigcup_{j=1}^{p_i}S_i^{j}(q)\right)$.

Next,  note that for any $\epsilon>0$ and  $g_{\epsilon}\in\mathcal{G}_{\epsilon}$,   $\lbrace g_{\epsilon}(i)+\epsilon, \mathcal{F}_{i}^{y}\rbrace_{i\geq 1}$   is a martingale difference sequence. Taking account of \cite[Theorem 2.8]{cg91},
\begin{eqnarray*}
\lim_{t\rightarrow+\infty}\frac{\sum_{i=1}^{t}I_{\lbrace \|Y_{i-n}\|\leq M\rbrace}(g_{\epsilon}(i)+\epsilon)}{N_{t}(M)}=0,\quad \mbox{a.s.}\quad \mbox{on } \Omega(M),
\end{eqnarray*}
where $\Omega(M)$ is defined in Theorem \ref{tzz}.
Thanks to the finite number of $U_{\epsilon}$ constrained in  $\mathcal{S}$, it gives
\begin{eqnarray}
\lim_{t\rightarrow+\infty}\inf_{U_{\epsilon}\subset\mathcal{S}}\frac{1}{N_{t}(M)}\sum_{i=1}^{t}I_{\lbrace \|Y_{i-n}\|\leq M\rbrace} g_{\epsilon}(i)=-\epsilon,\quad \mbox{a.s.  on }\Omega(M).\nonumber
\end{eqnarray}
As a result,  Lemma \ref{gxwy}(ii) infers that for some $g_{\epsilon}^{x}\in \mathcal{G}_{\epsilon}$,
\begin{eqnarray*}
&&\liminf_{t\rightarrow+\infty}\inf_{\|x\|=1}\frac{1}{N_{t}(M)}\sum_{i=1}^{t}I_{\lbrace \|Y_{i-n}\|\leq M\rbrace} g_{x}(i)\nonumber\\
&\geq &\liminf_{t\rightarrow+\infty}\inf_{\|x\|=1}\frac{1}{N_{t}(M)}\sum_{i=1}^{t}I_{\lbrace \|Y_{i-n}\|\leq M\rbrace}g_{\epsilon}^{x}(i)\nonumber\\
&\geq & \liminf_{t\rightarrow\infty}\inf_{U_{\epsilon}\subset\mathcal{S}}\frac{1}{N_{t}(M)}\sum_{i=1}^{t}I_{\lbrace \|Y_{i-n}\|\leq M\rbrace} g_{\epsilon}(i)\nonumber\\
&=&-\epsilon,\quad \mbox{a.s.}\quad \mbox{a.s.  on }\Omega(M).
\end{eqnarray*}
Further, by the arbitrariness  of $\epsilon$, we obtain
\begin{equation}\label{infa}
\liminf_{t\rightarrow+\infty}\inf_{\|x\|=1}\frac{1}{N_{t}(M)}\sum_{i=1}^{t}I_{\lbrace \|Y_{i-n}\|\leq M\rbrace} g_{x}(i)\geq 0\quad \mbox{a.s.  on }\Omega(M).
\end{equation}

Finally, by \dref{pk1}--\dref{infa},  if $\e$ is sufficiently small,  then there is a positive  random integer $T$ such that for any unit vector $x\in\mathbb{R}^{m}$ and all $t>T$,
\begin{eqnarray*}
&&\frac{1}{N_{t}(M)}\sum_{i=1}^{t}I_{\lbrace \|Y_{i-n}\|\leq M\rbrace}I_{\lbrace Y_i\in U_x\rbrace}\nonumber\\
&>&\frac{1}{N_{t}(M)}\sum_{i=1}^{t}I_{\lbrace \|Y_{i-n}\|\leq M\rbrace}P(Y_{i}\in U_x|\mathcal{F}_{i-1}^{y})-\frac{C_P}{2}\nonumber\\
&\geq &\frac{C_P}{2},\quad \mbox{a.s.}~\mbox{on}\quad \Omega(M)\cap\lbrace \|\theta\|\leq K\rbrace.
\end{eqnarray*}
Hence, we select $C_{\phi}> n R'$, $U_x\subset \overline{B(0,C_{\phi})}$, for sufficiently large $t$,
\begin{eqnarray*}
\lambda_{\min}(t+1)
&=&\inf_{\|x\|=1}x^{\tau}\left(I_{m}+\sum_{i=0}^{t}\phi_{i}\phi_{i}^{\tau}\right)x\nonumber\\
&\geq &\sum_{i=1}^{t-n+1}I_{\lbrace Y_{i}\in U_x\rbrace}(\phi^{\tau}(Y_i)x)^2\nonumber\\
&\geq &(\delta^{*})^2\sum_{i=1}^{t-n+1}I_{\lbrace Y_{i}\in U_x\rbrace}\nonumber\\
&\geq &\frac{(\delta^{*})^2C_P}{2}(N_{t}(M)-n),\quad \mbox{a.s.}~\mbox{on}\quad \Omega(M)\cap\lbrace \|\theta\|\leq K\rbrace.
\end{eqnarray*}
 Proposition \ref{cruc} is thus proved.
\end{proof}

\appendix
\section{}\label{AppA}
In this appendix, we follow the definitions and symbols in the proof of Lemma \ref{xy} and complete  the estimation details of \dref{gj1}. To this end, define
%Assume  $x_{i}\not=\textbf{0}$ for all $i\in[1,k+1]$, and Lemma \ref{xy} have already confirmed for $n= k$. Let
\begin{eqnarray}
I_{k+1}^{*}&\triangleq&\left\lbrace z_{k+1}: \left(\prod_{i=1}^{k}I_{i}\times z_{k+1} \right)\cap \mathcal{B}(\d)\cap \left(\prod_{i=1}^{k}K_{i}\times z_{k+1}\right)\not=\emptyset  \right\rbrace\nonumber\\
&&\cap I_{k+1}\cap\left(\bigcup_{j=1}^{p_{k+1}}\overline{S_{k+1}^{j}(q)}\right),\quad k\geq 1\nonumber\\
\mathcal{T}^3&\triangleq& \lbrace A\in \mathcal{T}^2: A\cap I_{k+1}^{*}\not=\emptyset\rbrace,\nonumber\\
\mathcal{T}^4&\triangleq&\left\lbrace B\in \mathcal{T}^1: \bigcup_{i=1}^{k}\lbrace z: z_{i}\in L_{i}\rbrace\cap B\not=\emptyset \right\rbrace,\nonumber
\end{eqnarray}
where $\prod_{i=1}^{k+1}I_{i}=O$ is the given closed box in the proof of Lemma \ref{xy}.

\begin{lemma}\label{ap1}
The cardinals of  $I_{k+1}^{*}, \mathcal{T}^3$ and $\mathcal{T}^4$   are bounded by
\begin{eqnarray}
|I_{k+1}^{*}|&\leq &(2p_{k+1}(|L_{k+1}|+2)+2)\prod_{i=1}^{k}(|L_{i}|+p_i),\label{I*}\\
|\mathcal{T}^3|&\leq & 2(2p_{k+1}(|L_{k+1}|+2)+2)\prod_{i=1}^{k}(|L_{i}|+p_i),\nonumber\\
|\mathcal{T}^4| &\leq & 2r^{k-1}\sum_{i=1}^{k}|L_i|,\label{T4}
\end{eqnarray}
\end{lemma}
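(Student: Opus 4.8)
The plan is to prove the three estimates separately; all of them rest on the geometry of the sets $K_i$ and $L_i$ from Lemma~\ref{sdf}, together with Lemma~\ref{ldfb}.

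The key preliminary fact I would record is that, for each $i\in[1,k]$, the set $K_i$ is a union of at most $|L_i|+p_i$ disjoint intervals, and on each of them the scalar function $x_i^{\tau}\phi^{(i)}$ is constant. Constancy holds because $K_i\subset\mbox{int}(A(x_i^{\tau}(\phi^{(i)})'))$ and $\phi^{(i)}\in C^{m_i}$ on $\bigcup_j\overline{S_i^{j}(q)}\subset E_i$, so $x_i^{\tau}(\phi^{(i)})'\equiv 0$ on every component of $K_i$; the interval count follows since each endpoint of a component of $K_i$ is either an endpoint of one of the $p_i$ intervals $\overline{S_i^{j}(q)}$ or a point of $L_i=(A(x_i^{\tau}(\phi^{(i)})')\cap\bigcup_j\overline{S_i^{j}(q)})\setminus K_i$, and disjoint intervals are counted by their left endpoints. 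Hence $\prod_{i=1}^{k}K_i$ is a union of at most $\prod_{i=1}^{k}(|L_i|+p_i)$ boxes, and on each such box $B$ the map $(z_1,\ldots,z_k)\mapsto\sum_{i=1}^{k}x_i^{\tau}\phi^{(i)}(z_i)$ takes a constant value $c_B$.

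To prove \dref{I*}, take $z_{k+1}\in I_{k+1}^{*}$. Then some point $(z_1,\ldots,z_k)$ lying in one of the boxes $B$ above satisfies $(z_1,\ldots,z_{k+1})\in\mathcal{B}(\delta)$. By continuity of $\phi^{(k+1)}$, near this point $\phi^{\tau}(y)x=c_B+x_{k+1}^{\tau}\phi^{(k+1)}(y_{k+1})$, so it lies on $\mathcal{B}(\delta)=\partial\{\phi^{\tau}x>\delta\}$ only if $z_{k+1}$ is a boundary point of $\{t\in\mathbb{R}:x_{k+1}^{\tau}\phi^{(k+1)}(t)>\delta-c_B\}$; indeed, if $x_{k+1}^{\tau}\phi^{(k+1)}$ were locally constant and equal to $\delta-c_B$ near $z_{k+1}$, then this point would sit in the interior of the complement of $\{\phi^{\tau}x>\delta\}$ and not on its boundary. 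Applying Lemma~\ref{ldfb} with threshold $\delta-c_B$ bounds the number of intervals, hence boundary points, of $\{x_{k+1}^{\tau}\phi^{(k+1)}>\delta-c_B\}$ inside $\bigcup_j S_{k+1}^{j}(q)$ by $2p_{k+1}(|L_{k+1}|+2)$; adding the endpoints of the $\overline{S_{k+1}^{j}(q)}$'s and of $I_{k+1}$ shows that each box $B$ contributes at most $2p_{k+1}(|L_{k+1}|+2)+2$ admissible values of $z_{k+1}$, and multiplying by the number of boxes yields \dref{I*}. In particular $I_{k+1}^{*}$ is finite, and since each of its points lies in at most two of the $r$ intervals of $\mathcal{T}^2$, the bound $|\mathcal{T}^3|\le 2|I_{k+1}^{*}|$ follows at once, giving the stated estimate for $|\mathcal{T}^3|$.

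Finally, \dref{T4} is purely combinatorial: $\bigcup_{i=1}^{k}\{z:z_i\in L_i\}$ is a union of $\sum_{i=1}^{k}|L_i|$ coordinate hyperplanes of the form $\{z_i=a\}$, and such a hyperplane meets at most $2r^{k-1}$ of the $r^{k}$ refined boxes of $\mathcal{T}^1$, since $a$ lies in at most two of the $r$ subintervals of $I_i$ while the remaining $k-1$ coordinates range freely; summing over $i$ gives $|\mathcal{T}^4|\le 2r^{k-1}\sum_{i=1}^{k}|L_i|$. The main obstacle is the reduction behind \dref{I*}: one must argue carefully that a point of $\mathcal{B}(\delta)$ whose first $k$ coordinates lie in $\prod_{i=1}^{k}K_i$ necessarily arises from a genuine one-dimensional sign change of $x_{k+1}^{\tau}\phi^{(k+1)}$ in the last variable, which requires the local-constancy structure of the $K_i$, a little point-set topology to dispose of the degenerate case, and the bookkeeping that converts the interval count of Lemma~\ref{ldfb} into a boundary-point count on the closed intervals $\overline{S_{k+1}^{j}(q)}$.
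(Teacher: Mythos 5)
Your proposal is correct and follows essentially the same route as the paper: decompose $\prod_{i=1}^{k}K_i$ into at most $\prod_{i=1}^{k}(|L_i|+p_i)$ boxes on which $\sum_{i=1}^{k}x_i^{\tau}\phi^{(i)}$ is constant (the paper's sets $\mathcal{P}_i$), reduce membership of such a point in $\mathcal{B}(\delta)$ to $z_{k+1}$ lying on the one-dimensional boundary $\partial\lbrace t: x_{k+1}^{\tau}\phi^{(k+1)}(t)>\delta-c_B\rbrace$ counted via Lemma \ref{ldfb}, and finish $\mathcal{T}^3,\mathcal{T}^4$ by the same elementary interval and hyperplane counts. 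The only minor looseness — that the ``only if'' step should dispose not just of local constancy but also of the cases $x_{k+1}^{\tau}\phi^{(k+1)}(z_{k+1})>\delta-c_B$ and $x_{k+1}^{\tau}\phi^{(k+1)}\leq\delta-c_B$ on a whole neighborhood (both routine, using that the constancy of $x_i^{\tau}\phi^{(i)}$ holds on a full neighborhood since $K_i\subset\mbox{int}(A(x_i^{\tau}(\phi^{(i)})'))$), and the exact ``$+2$'' bookkeeping — is present to the same degree in the paper's own proof and does not affect the result.
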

\begin{proof}
By the definitions of $\mathcal{T}^3$ and $\mathcal{T}^4$, $\mathcal{T}^3\leq 2|I_{k+1}^{*}|$ and \dref{T4} is trivial. So, it suffices to show \dref{I*}. For this, recall the definitions of $K_{i}$ and $L_i$, then for each $i\in[1,n]$, there is a set $\mathcal{P}_{i}$ consisting of some disjoint intervals such that  $|\mathcal{P}_i|\leq |L_i|+p_i$  and  $\bigcup_{I\in \mathcal{P}_i}I=K_i$. As a result, $|\prod_{i=1}^{k}\mathcal{P}_{i}|\leq\prod_{i=1}^{k}(|L_{i}|+p_i)$. For each box $B%=\prod_{i=1}^{k}\bar{I}_{i}
\in \prod_{i=1}^{k}\mathcal{P}_{i}$, denote
\begin{eqnarray}
I_{k+1}^{*}(B)&=&\lbrace z_{k+1}: (\prod_{i=1}^{k}I_{i}\times z_{k+1})\cap \mathcal{B}(\delta)\cap (B\times z_{k+1})\not=\emptyset  \rbrace\nonumber\\
&&\cap I_{k+1}\cap\left(\bigcup_{j=1}^{p_{k+1}}\overline{S_{k+1}^{j}(q)}\right).\nonumber
\end{eqnarray}
Since $B\subset \prod_{i=1}^{k} K_i$, it is evident that
\begin{eqnarray}\label{phi=cB}
\sum_{i=1}^k x_{i}^{\tau}\phi^{(i)}\equiv \mbox{constant}\qquad    \mbox{on }  B.
\end{eqnarray}
So, for any  $z_{k+1}\in I_{k+1}^{*}(B)$, arbitrarily taking a $(z_1,\ldots,z_{k})^{\tau}\in \mbox{int}(B)$ infers
$$
(z_1,\ldots,z_{k+1})^\tau\in  \mathcal{B}(\delta).
$$
 Let $\lbrace (z_{1,j},\ldots,z_{k+1,j})^{\tau}\rbrace_{j=1}^{+\infty}$ be a sequence of points in  $(\mbox{int}(B)\times E_{k+1})\cap\lbrace y: \phi^{\tau}(y)x> \delta\rbrace$ and  tend to $(z_{1},\ldots,z_{k+1})^{\tau}$. Then, $\lim_{j\rightarrow+\infty}\|z_{k+1,j}-z_{k+1}\|=0$ and
\begin{eqnarray}\label{xk+1phi}
\quad x_{k+1}^{\tau}\phi^{(k+1)}(z_{k+1,j})>\delta-\sum_{i=1}^{k}x_{i}^{\tau}\phi^{(i)}(z_{i,j})=\delta-\sum_{i=1}^{k}x_{i}^{\tau}\phi^{(i)}(z_{i}).
\end{eqnarray}
Denote
\begin{eqnarray}\label{elt}
\bar \delta=\delta-\sum_{i=1}^{k}x_{i}^{\tau}\phi^{(i)}(z_i),
\end{eqnarray}
so \dref{xk+1phi} implies
\begin{eqnarray}
z_{k+1}\in\partial(\lbrace z: x_{k+1}^{\tau}\phi^{(k+1)}(z)>\bar \delta\rbrace)\cap\bigcup_{j=1}^{p_{k+1}}\overline{S_{k+1}^{j}(q)},\nonumber
\end{eqnarray}

Therefore, applying Lemma \ref{ldfb},
\begin{eqnarray*}
|I_{k+1}^{*}(B)|&\leq &\left|\partial(\lbrace z: x_{k+1}^{\tau}\phi^{(k+1)}(z)>\bar \delta\rbrace)\cap\left(\bigcup_{j=1}^{p_{k+1}}\overline{S_{k+1}^{j}(q)}\right)\right|\nonumber\\
&\leq &2p_{k+1}(|L_{k+1}|+2)+2,
\end{eqnarray*}
and thus
\begin{eqnarray}\label{lpi}
|I_{k+1}^{*}|&\leq &(2p_{k+1}(|L_{k+1}|+2)+2)\left|\prod_{i=1}^{k}\mathcal{P}_{i}\right|\nonumber\\
&\leq &(2p_{k+1}(|L_{k+1}|+2)+2)\prod_{i=1}^{k}(|L_{i}|+p_i),
\end{eqnarray}
which completes the proof.
\end{proof}

\begin{lemma}\label{ap2}
Let Lemma \ref{xy}  hold with    $n=k$.
Then, there is a constant $C_1>0$  depending only on $\phi$ such that
\begin{eqnarray}\label{gj2}
\frac{r}{|I_{k+1}|}\int_{I_{k+1}}\sum_{B\in \mathcal{T}^{1}}I_{\mathcal{Z}(B)}dz_{k+1}
\leq C_1 r^{k}.
\end{eqnarray}
\end{lemma}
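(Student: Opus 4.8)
The plan is to reduce to one-dimensional slices: for a fixed $z_{k+1}\in I_{k+1}$, bound the slice count
$N(z_{k+1})\triangleq\sum_{B\in\mathcal{T}^1}I_{Z(B)}(z_{k+1})=\big|\lbrace B\in\mathcal{T}^1: z_{k+1}\in Z(B)\rbrace\big|$
(with $Z(B)$ as in the proof of Lemma~\ref{xy}) by $O(r^{k-1})$ for all $z_{k+1}$ outside a finite exceptional set, and then integrate, so that $\frac{r}{|I_{k+1}|}\int_{I_{k+1}}N(z_{k+1})\,dz_{k+1}\le\frac{r}{|I_{k+1}|}\cdot|I_{k+1}|\cdot O(r^{k-1})=O(r^{k})$. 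I keep the conventions of the proof of Lemma~\ref{xy}, so $x=\mbox{col}\lbrace x_1,\dots,x_{k+1}\rbrace$ with every $x_i\neq\textbf{0}$; write $x'\triangleq\mbox{col}\lbrace x_1,\dots,x_k\rbrace$, $\bar\delta(z_{k+1})\triangleq\delta-x_{k+1}^{\tau}\phi^{(k+1)}(z_{k+1})$, and let $\mathcal{B}_k(\bar\delta)\triangleq\partial(\lbrace z\in\mathbb{R}^k:\sum_{i=1}^kx_i^{\tau}\phi^{(i)}(z_i)>\bar\delta\rbrace)$ be the $k$-dimensional analogue of $\mathcal{B}(\delta)$. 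Since $\overline{\mathcal S}=\prod_{i=1}^{k+1}\bigcup_j\overline{S_i^j(q)}$ and each $\overline{S_i^j(q)}$ is a compact subset of the open interval $S_i^j\subset E_i$, Assumption A2(i) makes $\phi$ of class $C^{m}$ (in particular $C^1$) on a neighbourhood of $\overline{\mathcal S}$, while $\phi^{\tau}(\cdot)x$ is continuous on $\mathbb{R}^n$, so $\mathcal{B}(\delta)\subset\lbrace y:\phi^{\tau}(y)x=\delta\rbrace$; I use both facts freely.

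Fix $z_{k+1}$ and a box $B\in\mathcal{T}^1$ with $z_{k+1}\in Z(B)$, and pick a witness $(z^0,z_{k+1})\in\mathcal{B}(\delta)\cap\overline{\mathcal S}\cap(B\times\lbrace z_{k+1}\rbrace)$, choosing, whenever possible, one with $x_i^{\tau}(\phi^{(i)})'(z_i^0)\neq0$ for some $i\le k$. Continuity forces $\sum_{i=1}^{k+1}x_i^{\tau}\phi^{(i)}(z_i^0)=\delta$, i.e. $\sum_{i=1}^{k}x_i^{\tau}\phi^{(i)}(z_i^0)=\bar\delta(z_{k+1})$. If the witness is non-degenerate, then $\nabla_z\big(\sum_{i\le k}x_i^{\tau}\phi^{(i)}(z_i)\big)$ is nonzero at $z^0$, so the implicit function theorem places $z^0$ on the sliced boundary $\mathcal{B}_k(\bar\delta(z_{k+1}))$; together with $z^0\in\big(\prod_{i=1}^k\bigcup_j\overline{S_i^j(q)}\big)\cap B$ this gives $B\in\mathcal{T}_{\bar\delta(z_{k+1})}(\prod_{i=1}^kI_i,r)$, and the inductive hypothesis (Lemma~\ref{xy} with $n=k$, which is uniform over thresholds in $\mathbb{R}$) bounds the number of such $B$ by $Cr^{k-1}$ with $C$ depending only on $\phi$. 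If instead $B$ has no non-degenerate witness, then every witness $z^0$ satisfies $z_i^0\in A(x_i^{\tau}(\phi^{(i)})')\cap\bigcup_j\overline{S_i^j(q)}=K_i\sqcup L_i$ for all $i\le k$; hence either some witness has a coordinate in some $L_i$, whence $B\in\mathcal{T}^4$, or every coordinate of every witness lies in the corresponding $K_i$, whence $z^0\in\prod_{i=1}^kK_i$ and so $z_{k+1}\in I_{k+1}^{*}$ by the definition of $I_{k+1}^{*}$.

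Collecting the three buckets and using the crude bound $|\mathcal{T}^1|\le r^{k}$ together with Lemma~\ref{ap1}, for every $z_{k+1}\in I_{k+1}$,
\begin{equation*}
N(z_{k+1})\le Cr^{k-1}+|\mathcal{T}^4|+|\mathcal{T}^1|\,I_{\lbrace z_{k+1}\in I_{k+1}^{*}\rbrace}\le Cr^{k-1}+2r^{k-1}\sum_{i=1}^k|L_i|+r^{k}\,I_{\lbrace z_{k+1}\in I_{k+1}^{*}\rbrace}.
\end{equation*}
By Lemma~\ref{ap1} the set $I_{k+1}^{*}$ is finite, so $\ell(I_{k+1}^{*})=0$ and the last term integrates to zero; therefore
\begin{equation*}
\frac{r}{|I_{k+1}|}\int_{I_{k+1}}N(z_{k+1})\,dz_{k+1}\le\frac{r}{|I_{k+1}|}\cdot|I_{k+1}|\left(Cr^{k-1}+2r^{k-1}\sum_{i=1}^k|L_i|\right)=\left(C+2\sum_{i=1}^k|L_i|\right)r^{k},
\end{equation*}
which is \dref{gj2} with $C_1\triangleq C+2\sum_{i=1}^k|L_i|$; this depends only on $\phi$ because $C$ does and $|L_i|\le2^{m_i}p_i(3|\mathcal{G}(i)|+|H(i)|+2)$ by Lemma~\ref{sdf}.

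The genuinely delicate step is expected to be the passage from ``$(z^0,z_{k+1})\in\mathcal{B}(\delta)$'' to ``$z^0\in\mathcal{B}_k(\bar\delta(z_{k+1}))$'': a slice of a topological boundary need not be the boundary of the slice, and it is exactly the non-degeneracy of the chosen witness — one nonvanishing partial derivative $x_i^{\tau}(\phi^{(i)})'$ — that allows the implicit function theorem to rescue this, while the separable form \dref{sysphi} of $\phi$ is what turns the $z_{k+1}$-dependence into a mere shift of the threshold $\delta\mapsto\bar\delta(z_{k+1})$, so that Lemma~\ref{xy} for $n=k$ applies verbatim. The remaining care is the bookkeeping that the boxes escaping this argument are either truly $O(r^{k-1})$ in number (those meeting the finite sets $L_i$, collected in $\mathcal{T}^4$) or can only occur over the finite — hence Lebesgue-null — set $I_{k+1}^{*}$, so that they disappear upon integrating in $z_{k+1}$.
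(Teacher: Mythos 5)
Your proof is correct and follows essentially the same route as the paper's: slice at a fixed $z_{k+1}$, turn the boundary condition into the $k$-dimensional boundary with the shifted threshold $\bar\delta(z_{k+1})=\delta-x_{k+1}^{\tau}\phi^{(k+1)}(z_{k+1})$ wherever some derivative $x_i^{\tau}(\phi^{(i)})'$ is nonvanishing, apply Lemma \ref{xy} with $n=k$ to those boxes, and send the degenerate boxes either into $\mathcal{T}^4$ (a coordinate in some $L_i$) or into the exceptional set of $z_{k+1}$ coming from witnesses in $\prod_{i=1}^{k}K_i$, all controlled by Lemmas \ref{sdf} and \ref{ap1}. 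The only (harmless) deviation is bookkeeping: you dispose of the $\prod_{i=1}^{k}K_i$-case by noting that $I_{k+1}^{*}$ is finite, hence Lebesgue-null, so that term vanishes upon integration, whereas the paper instead counts the grid intervals $\mathcal{T}^3$ meeting $I_{k+1}^{*}$ and pays an extra $|\mathcal{T}^3|r^{k-1}$; both rest on the same cardinality bound \dref{I*}.
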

\begin{proof}
Denote $\phi'=\mbox{col}\lbrace \phi^{(1)},\ldots,\phi^{(k)}\rbrace$, $x'=\mbox{col}\lbrace x_1,\ldots,x_{k}\rbrace$ and $z=(z_1,\ldots,z_{k})^{\tau}$.
Given $z_{k+1}\in I_{k+1}$, define $\delta'\triangleq \delta-\phi^{(k+1)}(z_{k+1})x_{k+1}$. Then,
\begin{eqnarray*}
&&\lbrace z: (z_1,\ldots,z_{k+1})^{\tau}\in \mathcal{B}(\d) \rbrace \cap \prod_{i=1}^{k} A^c(x_{i}^{\tau}(\phi^{(i)})')\cap \left(\prod_{i=1}^{k}\bigcup_{j=1}^{p_i}\overline{S_i^{j}(q)}\right)  \\
&=&\partial(\lbrace z: (\phi')^{\tau}(z)x'> \delta'\rbrace)\cap\prod_{i=1}^{k} A^c(x_{i}^{\tau}(\phi^{(i)})')\cap\left(\prod_{i=1}^{k}\bigcup_{j=1}^{p_i}\overline{S_i^{j}(q)}\right).
\end{eqnarray*}
In addition, for $\lbrace L_i,  K_i\rbrace_{i=1}^{n}$ defined in Lemma \ref{sdf},
$$
\left(\prod_{i=1}^{k} A^c(x_{i}^{\tau}(\phi^{(i)})')\right)^c=\left(\bigcup_{i=1}^{k}\lbrace z: z_{i}\in L_{i} \rbrace\right)\cup\prod_{i=1}^{k}K_{i},
$$
so we arrive at
\begin{eqnarray}\label{lbr}
&&\lbrace z: (z_1,\ldots,z_{k+1})^{\tau}\in \mathcal{B}(\d) \rbrace \cap\left(\prod_{i=1}^{k}\bigcup_{j=1}^{p_i}\overline{S_i^{j}(q)}\right)\nonumber\\
&\subset &\partial(\lbrace z: (\phi')^{\tau}(z)x'> \delta'\rbrace)\cap\left(\prod_{i=1}^{k}\bigcup_{j=1}^{p_i}\overline{S_i^{j}(q)}\right)\nonumber\\
&& \cup\left(\bigcup_{i=1}^{k}\lbrace z: z_{i}\in L_{i} \rbrace\right)\cup\prod_{i=1}^{k}K_{i}.
\end{eqnarray}
Consequently,  for any $z_{k+1}\in A\in \mathcal{T}^2\setminus \mathcal{T}^3$ and  $B\in \mathcal{T}^{1}\setminus \mathcal{T}^4$, \dref{lbr} shows
\begin{eqnarray}
&&\lbrace z: (z_1,\ldots,z_{k+1})^{\tau}\in\mathcal{B}(\d)\rbrace\cap\prod_{i=1}^{k}\bigcup_{j=1}^{p_i}\overline{S_i^{j}(q)}\cap B\nonumber\\
&\subset &\partial(\lbrace z: (\phi')^{\tau}(z)x'> \delta'\rbrace)\cap\prod_{i=1}^{k}\bigcup_{j=1}^{p_i}\overline{S_i^{j}(q)}\cap B.\nonumber
\end{eqnarray}

Now, for $\partial(\lbrace z: (\phi')^{\tau}(z)x'> \delta'\rbrace)\cap\prod_{i=1}^{k}\bigcup_{j=1}^{p_i}\overline{S_i^{j}(q)}$ and $\mathcal{T}^1$,   applying Lemma \ref{xy} with $n=k$ %set $\partial(\lbrace z: (\phi')^{\tau}(z)x'> \delta'\rbrace)\cap\prod_{i=1}^{k}\bigcup_{j=1}^{p_i}\overline{S_i^{j}(q)}$ falls into at most $C a^{k-1}$ disjoint boxes of $T^{1}$, in other words,
leads to
\begin{eqnarray}\label{bt1}
\sum_{B\in \mathcal{T}^{1}\setminus \mathcal{T}^4}I_{Z(B)}(z_{k+1})\leq C r^{k-1}.
\end{eqnarray}
Based on \dref{bt1}, it is readily to compute
\begin{eqnarray}\label{ibt1}
&&\int_{I_{k+1}}\sum_{B\in \mathcal{T}^{1}}I_{\mathcal{Z}(B)}dz_{k+1}=\sum_{A\in \mathcal{T}^2}\int_{A}\sum_{B\in \mathcal{T}^{1}}I_{\mathcal{Z}(B)}dz_{k+1}\nonumber\\
&\leq &\sum_{A\in \mathcal{T}^2\setminus \mathcal{T}^3}\int_{A}\sum_{B\in \mathcal{T}^{1}}I_{\mathcal{Z}(B)}dz_{k+1}+  \sum_{A\in \mathcal{T}^3}\int_{A}r^kdz_{k+1}         \nonumber\\
&=&\sum_{A\in \mathcal{T}^2\setminus \mathcal{T}^3}\int_{A}\sum_{B\in \mathcal{T}^{1}\setminus \mathcal{T}^4}I_{\mathcal{Z}(B)}dz_{k+1}+\sum_{A\in \mathcal{T}^2\setminus \mathcal{T}^3}\int_{A}\sum_{B\in \mathcal{T}^4}I_{\mathcal{Z}(B)}dz_{k+1}\nonumber\\
&&+r^{k}\cdot\frac{|I_{k+1}|}{r}\cdot|\mathcal{T}^3|\nonumber\\
&\leq &\int_{I_{k+1}}Cr^{k-1}dz_{k+1}+
\sum_{B\in \mathcal{T}^4}\int_{I_{k+1}}1 dz_{k+1}+r^{k-1}|I_{k+1}||\mathcal{T}^3|.\nonumber\\
&\leq & ((C+|\mathcal{T}^3|)r^{k-1}+|\mathcal{T}^4|)
|I_{k+1}|.\nonumber
\end{eqnarray}
The result follows from  Lemmas  \ref{ap1} and  \ref{sdf}.
\end{proof}

\begin{lemma}\label{ap3}
There is a constant $C_2>0$ depends only  on $\phi$ such that
\begin{eqnarray}
\sum_{B\in \mathcal{T}^{1}}|\mathcal{Z}_{2}(B)|\leq C_2 r^{k}.\nonumber
\end{eqnarray}
\end{lemma}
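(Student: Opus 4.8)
The plan is to count the members of $\mathcal T(O,r)$ that meet
\[
\mathcal E:=\bigcup_{B\in\mathcal T^{1}}\big(B\times\partial Z(B)\big).
\]
Indeed, if $A\in\mathcal Z_2(B)$ then $A\cap\partial Z(B)\neq\emptyset$, and since $\partial Z(B)\subset Z(B)$, picking $z_{k+1}\in A\cap\partial Z(B)$ together with a witness $z\in B$ of $z_{k+1}\in Z(B)$ shows $(B\times A)\cap\mathcal E\neq\emptyset$; as $(B,A)\mapsto B\times A$ identifies $\mathcal T^{1}\times\mathcal T^{2}$ with $\mathcal T(O,r)$, this gives $\sum_{B\in\mathcal T^{1}}|\mathcal Z_2(B)|\le\big|\{U\in\mathcal T(O,r):U\cap\mathcal E\neq\emptyset\}\big|$, so it suffices to bound the right-hand side by $C_2r^{k}$.

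The crux is a description of $\mathcal E$. Put $\Sigma_i:=\bigcup_{j=1}^{p_i}\overline{S_i^{j}(q)}$, $G(z):=\sum_{i=1}^{k}x_i^{\tau}\phi^{(i)}(z_i)$ for $z=(z_1,\dots,z_k)^{\tau}$, and let $\mathcal W$ be the union of the grid hyperplanes $\{z\in\mathbb R^{k}:z_{i_0}=c\}$ over $i_0\in[1,k]$ and $c$ a division point of $I_{i_0}$. I claim
\[
\mathcal E\ \subseteq\ \big(\mathcal B(\delta)\cap\overline{\mathcal S}\big)\ \cap\ \Big[\ \Big(\textstyle\prod_{i=1}^{k}\big(\partial\Sigma_i\cup A(x_i^{\tau}(\phi^{(i)})')\big)\times I_{k+1}\Big)\ \cup\ \big(\mathcal W\times I_{k+1}\big)\ \Big].
\]
To establish this, let $(z^{*},z_{k+1}^{*})\in\mathcal E$ with $z_{k+1}^{*}\in\partial Z(B)$ and $z^{*}\in B$ the witness; then $(z^{*},z_{k+1}^{*})\in\mathcal B(\delta)\cap\overline{\mathcal S}$, hence $z^{*}\in K_B:=B\cap\prod_{i=1}^{k}\Sigma_i$ and $\phi^{\tau}(z^{*},z_{k+1}^{*})x=\delta$. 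If some coordinate $z_{i_0}^{*}$ were interior both to the $i_0$-th factor of $B$ and to $\Sigma_{i_0}$ and satisfied $x_{i_0}^{\tau}(\phi^{(i_0)})'(z_{i_0}^{*})\neq0$, then $x_{i_0}^{\tau}\phi^{(i_0)}$ would be strictly monotone near $z_{i_0}^{*}$, so perturbing $z^{*}$ only in coordinate $i_0$ (staying in $K_B$) would let $G$ assume every value in an open interval about $G(z^{*})$; since we may assume $z_{k+1}^{*}\notin\partial(\Sigma_{k+1}\cap I_{k+1})$ (that boundary being a finite set, absorbed into $\mathcal W$ for bookkeeping), continuity of $\phi^{(k+1)}$ would then produce, for every $z_{k+1}$ near $z_{k+1}^{*}$, a point of $\mathcal B(\delta)\cap\overline{\mathcal S}$ in the slice $B\times\{z_{k+1}\}$, forcing $z_{k+1}^{*}\in\mbox{int}\,Z(B)$ --- a contradiction. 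Hence every coordinate $z_i^{*}$ lies on a face of $B$ (so $z^{*}\in\mathcal W$), or in $\partial\Sigma_i$, or in $A(x_i^{\tau}(\phi^{(i)})')$, which is the claim.

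It remains to count. For the part of $\mathcal E$ in $\mathcal W\times I_{k+1}$: on each of the $k(r+1)$ grid hyperplanes $\{z_{i_0}=c\}$, the set $\mathcal B(\delta)\cap\overline{\mathcal S}$ lives in a box of the remaining $k$ coordinates, and by the induction hypothesis of Lemma~\ref{xy} (in $k$ coordinates) it meets at most $Cr^{k-1}$ of the $r^{k}$ refined boxes of that slice; lifting back to $\mathcal T(O,r)$ multiplies by at most $2$, so this part meets $O(r^{k})$ boxes. For the remaining part, use Lemma~\ref{sdf}: $A(x_i^{\tau}(\phi^{(i)})')\cap\Sigma_i=K_i\cup L_i$ with $|L_i|<\infty$, with $x_i^{\tau}\phi^{(i)}$ \emph{constant} on each of the at most $|L_i|+p_i$ component intervals of $K_i$ (because $K_i\subset\mbox{int}\,A(x_i^{\tau}(\phi^{(i)})')$), and $\partial\Sigma_i$ finite. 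Thus $\prod_{i=1}^{k}\big(\partial\Sigma_i\cup K_i\cup L_i\big)$ is a union of at most $\prod_{i=1}^{k}(|L_i|+p_i+1)$ sub-boxes $P$, on the interior of each of which $G\equiv\gamma_P$ is constant; consequently, near any point of $\mathcal B(\delta)\cap(\mbox{int}\,P\times\Sigma_{k+1})$ the function $\phi^{\tau}x$ depends only on $z_{k+1}$, so that coordinate must lie in $\partial\{x_{k+1}^{\tau}\phi^{(k+1)}>\delta-\gamma_P\}\cap\Sigma_{k+1}$ or $\partial\{x_{k+1}^{\tau}\phi^{(k+1)}<\delta-\gamma_P\}\cap\Sigma_{k+1}$, each of which has at most $2p_{k+1}(|L_{k+1}|+2)+2p_{k+1}$ points by Lemma~\ref{ldfb}. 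Hence $\mathcal E\cap(\mbox{int}\,P\times I_{k+1})$ meets at most $2\big(2p_{k+1}(|L_{k+1}|+2)+2p_{k+1}\big)$ boxes of $\mathcal T(O,r)$ per member of $\mathcal T^{1}$ meeting $P$, hence $O(r^{k})$ in total; the lower-dimensional remainder $\partial P$ is handled by the same reasoning applied in fewer coordinates, and summing over the $O(1)$ pieces $P$ gives $O(r^{k})$. Adding the two parts yields the bound with some $C_2>0$ depending only on the $\phi$-determined data $p_i,q,|L_i|,|\mathcal G(i)|,|H(i)|$.

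I expect the main obstacle to be the structural claim of the second paragraph --- in particular converting ``the slice $B\times\{z_{k+1}\}$ meets the level set $\{\phi^{\tau}x=\delta\}$'' into ``it meets the topological boundary $\mathcal B(\delta)=\partial\{\phi^{\tau}x>\delta\}$'' in the perturbation step, and, in the counting, legitimately replacing $\mathcal B(\delta)$ near $\mbox{int}\,P$ by a genuine boundary of a super-level set of $x_{k+1}^{\tau}\phi^{(k+1)}$ (thereby ruling out spurious boundary points of $Z(B)$ coming from interior local maxima of $\phi^{\tau}x$ or from flat pieces where $\phi^{\tau}x\equiv\delta$). This is precisely where the constancy of $x_i^{\tau}\phi^{(i)}$ on $K_i\subset\mbox{int}\,A(x_i^{\tau}(\phi^{(i)})')$ (Lemma~\ref{sdf}), the continuity assumption A2(i), and the fact that $\overline{\mathcal S}$ avoids the badly behaved sets $Z_s^{2}(i)$ enter once more, in the spirit of Lemmas~\ref{useful} and~\ref{sw}.
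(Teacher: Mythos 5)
Your reduction of $\sum_B|\mathcal Z_2(B)|$ to counting boxes of $\mathcal T(O,r)$ that meet the witness points is fine (modulo restating the structural claim for $\mathcal E\cap\mathcal B(\delta)\cap\overline{\mathcal S}$ rather than for $\mathcal E$ itself, which is certainly not contained in $\mathcal B(\delta)$), and your per-coordinate alternative (face of $B$, or $\partial\Sigma_i$, or $A(x_i^{\tau}(\phi^{(i)})')$, writing $\Sigma_i=\bigcup_{j=1}^{p_i}\overline{S_i^{j}(q)}$) is essentially sound. The genuine gaps are in your two counting steps, and they are exactly the issues you flag at the end without resolving. First, the grid-hyperplane part: the induction hypothesis of Lemma \ref{xy} in $k$ coordinates controls boxes meeting the \emph{boundary} of the super-level set of the restricted $k$-block function, whereas the slice $\mathcal B(\delta)\cap\overline{\mathcal S}\cap\{z_{i_0}=c\}$ is only contained in its \emph{level} set, and these can differ badly: if each block $x_i^{\tau}\phi^{(i)}$ with $i\neq i_0$ (including $i=k+1$) is constant on some interval $J_i\subset\Sigma_i$ (such flat pieces are precisely the components of $K_i$ in Lemma \ref{sdf} and are allowed under A2), and $x_{i_0}^{\tau}\phi^{(i_0)}$ crosses the complementary value strictly at $z_{i_0}=c$, then $\{c\}\times\prod_{i\neq i_0}J_i\times J_{k+1}$ lies in $\mathcal B(\delta)\cap\overline{\mathcal S}$ (it is approached from the $z_{i_0}$ direction), and this slice meets of order $r^{k}$, not $Cr^{k-1}$, refined boxes of the hyperplane. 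So the claimed per-hyperplane bound is false; repairing it requires showing that only boundedly many division points $c$ can be of this type (a Lemma \ref{ldfb}-type count) and treating them separately. Second, in the product part, from $\phi^{\tau}(z,z_{k+1})x=\delta$ over $\mbox{int}\,P$ you only get $x_{k+1}^{\tau}\phi^{(k+1)}(z_{k+1})=\delta-\gamma_P$, which does not place $z_{k+1}$ in either of the two boundaries you list: the interior flat intervals of $x_{k+1}^{\tau}\phi^{(k+1)}$ at that level are missing, they are fat in $z_{k+1}$, and without the additional observation that points interior to such flat-times-flat patches cannot lie in $\mathcal B(\delta)$ (plus a treatment of the boundaries of those patches) the count can again exceed $O(r^{k})$. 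As written, therefore, the argument does not close.

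It is worth comparing with how the paper avoids both problems. After discarding $\mathcal T^4$ and $\mathcal T^5$ (a cost of $O(r^{k})$), it argues per box $B$ and, for each $z_{k+1}\in\partial Z(B)$, chooses a witness in $B$ that \emph{minimizes the distance to the faces of} $B$; this minimization eliminates the flat $K_i$ directions inside $B$. Then either some coordinate of the witness is interior with nonzero block derivative, and the implicit function theorem forces $z_{k+1}$ into the finite set $\bigcup_{j}\partial(S_{k+1}^{j}(q))$, or the witness sits at a corner of $B$, where $\bar\delta$ takes finitely many values and Lemma \ref{ldfb} bounds the admissible $z_{k+1}$. Hence $|\partial Z(B)|$, and so $|\mathcal Z_2(B)|$, is bounded by a constant depending only on $\phi$ uniformly over the good $B$, with no slicing and no use of the induction hypothesis in this lemma (that is reserved for Lemma \ref{ap2}). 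If you wish to keep your global-counting scheme, you must import this flatness control (Lemmas \ref{sdf} and \ref{ldfb}, together with the remark that interiors of flat patches at level $\delta$ do not belong to $\mathcal B(\delta)$) into both of your counting steps.
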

\begin{proof}
Let
\begin{eqnarray}
\mathcal{T}^5\triangleq \left\lbrace \prod_{i=1}^{k}I_{i}'\in \mathcal{T}^{1}:  \partial\left(\bigcup_{j=1}^{p_i}S_i^{j}(q)\right)\cap I_{i}'\not=\emptyset \mbox{ for some } i\in [1,k] \right\rbrace.\nonumber
\end{eqnarray}
Clearly,  $|\mathcal{T}^5|\leq 4r^{k-1}\sum_{i=1}^{k}p_{i}$. Hence,
\begin{eqnarray}\label{Z2B}
\sum_{B\in \mathcal{T}^{1}}|\mathcal{Z}_{2}(B)|\leq \sum_{B\in \mathcal{T}^{1}\setminus (\mathcal{T}^5\cup \mathcal{T}^4)}|\mathcal{Z}_{2}(B)|+r|\mathcal{T}^4|+4r^{k}\sum_{i=1}^{k}p_{i}.
\end{eqnarray}
It suffices to estimate the first term in the right hand side of \dref{Z2B}. To this end,
take a set $B=\prod_{i=1}^{k}I'_{i}\in \mathcal{T}^{1}\setminus (\mathcal{T}^5\cup \mathcal{T}^4)$ and let $z_{k+1}\in \partial Z(B)\cap\mbox{int}(I_{k+1})$.   Select a point $(z_{1},\ldots,z_{k})^{\tau}\in B$  that
\begin{eqnarray}\label{z1zkdef}
&&\mbox{dist}((z_{1},\ldots,z_{k+1})^{\tau},\prod_{i=1}^{k}\partial(I'_{i})\times z_{k+1})\nonumber\\
&=&\min_{y\in\mathcal{B}(\delta)\cap \prod_{i=1}^{k+1}\bigcup_{j=1}^{p_i}\overline{S_i^{j}(q)}\cap(B\times z_{k+1})}\mbox{dist}(y,\prod_{i=1}^{k}\partial(I'_{i})\times z_{k+1}).
\end{eqnarray}
Clearly, $B\in \mathcal{T}^{1}\setminus (\mathcal{T}^5\cup \mathcal{T}^4)$ implies that for each $i=1,\ldots,k$,
$$\mbox{int}(I'_{i})\subset \bigcup_{j=1}^{p_i}S_i^{j}(q)\quad \mbox{and}\quad \mbox{int}(I'_{i})\cap L_i=\emptyset.
$$
We consider the following two cases:
\\
\emph{Case 1:} $(z_{1},\ldots,z_{k})^{\tau}\not\in\prod_{i=1}^{k}\partial(I'_{i})$. Then, there is an integer $i\in[1,k]$ such that $z_{i}\in\mbox{int}(I'_{i})$.
By \dref{z1zkdef},  $z_{i}\not\in K_{i}\cap\mbox{int}(I'_{i})$. Otherwise, there is a $\rho>0$    such that
 $x_i^{\tau}(\phi^{(i)})'\equiv0$ on $[z_i-\rho,z_{i}+\rho]\subset \mbox{int}(I'_i)$.
Similar to \dref{phi=cB}--\dref{xk+1phi},  for any $z'_i\in [z_i-\rho,z_{i}+\rho]$,
%for $z_{i}\in K_{i}\cap\mbox{int}(I'_{i})$, there is a $\rho>0$ such that $x_i^{\tau}(\phi^{(i)})'\equiv0$ on $[z_i-\rho,z_{i}+\rho]\subset \mbox{int}(\mathcal{I})$. For any fix $z\in [z_i-\rho,z_{i}+\rho]$, by $(z_{1},\ldots,z_{k+1})^{\tau}\in \mathcal{B}(\delta)$ we can obtain a sequence $\lbrace (z_{1,j},\ldots,z_{k+1,j})^{\tau}\rbrace_{j=1}^{+\infty}$ tend to $(z_{1},\ldots,z_{k+1})^{\tau}$ and satisfy $\sum_{l=1}^{k+1}x_{l}\phi^{(l)}(z_{l,j})>\delta$, $z_{i,j}\in (z_i-\rho,z_{i}+\rho)$, $j\geq 1$. Note that sequence $\lbrace (z_{1,j},\ldots, z_{i-1,j}, z, z_{i+1,j},\ldots,  z_{k+1,j})^{\tau}\rbrace_{j=1}^{+\infty}$ tend to $(z_{1},\ldots, z_{i-1}, z, z_{i+1},\ldots,  z_{k+1})^{\tau}$ and $\sum_{l\not=i}x_{l}\phi^{(l)}(z_{l,j})+x_{i}\phi^{(i)}(z)=\sum_{l=1}^{k+1}x_{l}\phi^{(l)}(z_{l,j})>\delta$, $j\geq 1$, it follows that
\begin{eqnarray}
(z_{1},\ldots, z_{i-1}, z'_i, z_{i+1},\ldots,  z_{k+1})^{\tau}\in  \mathcal{B}(\delta)\cap \prod_{i=1}^{k+1}\bigcup_{j=1}^{p_i}\overline{S_i^{j}(q)}\cap(B\times z_{k+1}).\nonumber
\end{eqnarray}
Then,
\begin{eqnarray}
&&\min\bigg\lbrace \mbox{dist}((z_{1},\ldots, z_{i-1}, z_i-\rho, z_{i+1},\ldots,  z_{k+1})^{\tau},\prod_{i=1}^{k}\partial(I'_{i})\times z_{k+1})\nonumber\\
&& ~~~~~~~~~~\mbox{dist}((z_{1},\ldots, z_{i-1}, z_i+\rho, z_{i+1},\ldots,  z_{k+1})^{\tau},\prod_{i=1}^{k}\partial(I'_{i})\times z_{k+1})\bigg\rbrace\nonumber\\
&<& \mbox{dist}((z_{1},\ldots,z_{k+1})^{\tau},\prod_{i=1}^{k}\partial(I'_{i})\times z_{k+1}),\nonumber
\end{eqnarray}
which contradicts to \dref{z1zkdef}.

Now, since $z_{i}\not\in K_{i}\cap\mbox{int}(I'_{i})$ and $B\notin \mathcal{T}^4$,
it yields that $x_i^{\tau}(\phi^{(i)})'(z_{i})\not=0$. We claim
\begin{eqnarray}\label{z1}
z_{k+1}\in\bigcup_{j=1}^{p_{k+1}}\partial(S_i^{j}(q)).
\end{eqnarray}
 Otherwise, $z_{k+1}\in\bigcup_{j=1}^{p_{k+1}}S_i^{j}(q)$. By the \textit{Implicit function theorem}, there is a sufficiently small $\eta>0$ such that for any $z_{k+1}'\in(z_{k+1}-\eta,z_{k+1}+\eta)$, a point $z_{i}'\in\mbox{int}(I_{i})$ exists and
\begin{eqnarray*}
(z_1,\ldots,z_{i-1},z_i',z_{i+1},\ldots,z_{k},z_{k+1}')^{\tau}\in \mathcal{B}(\delta)\cap \prod_{i=1}^{k+1}\bigcup_{j=1}^{p_i}\overline{S_i^{j}(q)}.
\end{eqnarray*}
This means   $z_{k+1}\in \mbox{int}(Z(B))$, which is impossible due to  $z_{k+1}\in \partial Z(B)$. Hence \dref{z1} holds.

\emph{Case 2:} $(z_{1},\ldots,z_{k})^{\tau}\in\prod_{i=1}^{k}\partial(I'_{i})$.
%\begin{eqnarray}\label{elt}
%\delta''=\delta-\sum_{i=1}^{k}x_{i}^{\tau}\phi^{(i)}(z_i).
%\end{eqnarray}
 Since $z_{k+1}\in \partial(Z(B))$, $x_{k+1}^{\tau}\phi^{(k+1)}$ cannot be a constant on any neighbourhood of $z_{k}$. So,
\begin{eqnarray}\label{z2}
z_{k+1} &\in &\partial(\lbrace z: x_{k+1}^{\tau}\phi^{(k+1)}(z)\not=\bar \delta\rbrace)\cap\left(\bigcup_{j=1}^{p_{k+1}}S_i^{j}(q)\right)\nonumber\\
&&\cup \left(\bigcup_{j=1}^{p_{k+1}}\partial(S_i^{j}(q))\right),
\end{eqnarray}
where $\bar \delta$ is defined by \dref{elt}.

Combining  the above two cases, $z_{k+1}\in \partial(Z(B))\cap \mbox{int}(I_{k+1})$ implies \dref{z2}.
Taking the case $z_{k+1}\in \partial(I_{k+1})$ into consideration, we obtain
\begin{eqnarray}
\partial(Z(B))&\subset &\partial(\lbrace y\in \mathbb{R}: x_{k+1}^{\tau}\phi^{(k+1)}(y)\not=\bar \delta\rbrace)\cap\left(\bigcup_{j=1}^{p_{k+1}}S_i^{j}(q)\right)\nonumber\\
&&\cup \left(\bigcup_{j=1}^{p_{k+1}}\partial(S_i^{j}(q))\right)\cup \partial(I_{k+1}).
\end{eqnarray}
which, together with the fact $|\partial(\lbrace z: x_{k+1}^{\tau}\phi^{(k+1)}(z)\not=\bar \delta\rbrace)\cap(\bigcup_{j=1}^{p_{k+1}}S_i^{j}(q))|\leq 4p_{k+1}(|L_{k+1}|+2)$ from \dref{bd2}, leads to
\begin{eqnarray}
|\mathcal{Z}_{2}(B)|\leq 2|\partial(Z(B))|\leq 8p_{k+1}(|L_{k+1}|+2)+4p_{k+1}+4.\nonumber
\end{eqnarray}
Now, in view of \dref{Z2B}, we derive
\begin{eqnarray}
\sum_{B\in \mathcal{T}^{1}}|\mathcal{Z}_{2}(B)|\leq (8p_{k+1}(|L_{k+1}|+2)+4p_{k+1}+4) r^{k}+|\mathcal{T}^4| r+4r^{k}\sum_{i=1}^{k}p_{i},\nonumber
\end{eqnarray}
which yields the result by Lemma \ref{ap1}.
\end{proof}

\section{}\label{AppB}
In this appendix, we provide the proof of Theorems \ref{tzz2} by showing
\begin{proposition}\label{cruc1}
Proposition \ref{cruc} holds for model \dref{sys} if Assumption A2 is replaced by A3.
\end{proposition}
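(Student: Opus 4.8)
The plan is to follow the three‑step scheme of the proof of Proposition~\ref{cruc}, using Assumption A3 to replace its two heaviest stages — the construction of $U_x$ in Section~\ref{cu} and the covering estimate of Section~\ref{32} together with Appendix~\ref{AppA} — by short arguments, so that the last stage (Section~\ref{33}) can then be copied almost verbatim. \emph{Step 1 (the set $U_x$).} Fix $M,K>0$ and a closed box $O$ with $\overline E\subset O$. In place of the product $\mathcal S$ built from the operators $\Lambda_l$ and the intervals $S_i^j(q)$, I would first show, by the exhaustion argument of Lemmas~\ref{useful} and~\ref{qq} — approximate the bounded open set $E$ from inside by an increasing sequence of finite unions of open boxes, and use the lower semicontinuity of $x\mapsto\ell(\{y:|\phi^\tau(y)x|>\delta^*\})$ together with compactness of the unit sphere, starting from \dref{dl*} — that there is a finite union of open boxes $\mathcal S$ with $\overline{\mathcal S}\subset E$ and $\inf_{\|x\|=1}\ell(\{y\in\mathcal S:|\phi^\tau(y)x|>\delta^*\})>0$. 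For every unit $x$ put $U_x\triangleq\{y\in\mathcal S:|\phi^\tau(y)x|>\delta^*\}$; then $U_x\subset\overline{B(0,C_\phi)}$ for a $C_\phi$ depending only on $\phi$, $\inf_{y\in U_x}|\phi^\tau(y)x|\ge\delta^*$, and $\inf_{\|x\|=1}\ell(U_x)>0$. Defining $\varsigma,g$ by the formulas \dref{vard}--\dref{d=g} with the general $\phi$, the relation $d_k=\varsigma_k+o_{k-1}$ with $o_{k-1}$ a function of $\varsigma_{n+1},\dots,\varsigma_{k-1},y,x$ stays triangular (this is all its proof uses, not the split form \dref{sysphi}), so Lemma~\ref{inf>0}(i) holds verbatim; since $\varsigma\mapsto g(\varsigma,y,x)$ is then measure preserving, the A3‑analogue of Lemma~\ref{inf>0}(ii) follows at once:
\[
\inf_{\|z\|=1,\,\|y\|\le M,\,\|x\|\le K}\ell\!\left(\{\varsigma:|\phi^\tau(g(\varsigma,y,x))z|>\delta^*,\ g(\varsigma,y,x)\in\mathcal S\}\right)=\inf_{\|z\|=1}\ell(U_z)>0 .
\]

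\emph{Step 2 (the boundary layer of $U_x$).} The second step is the key new estimate
\[
\lim_{\beta\downarrow0}\ \sup_{\|x\|=1}\ \ell\!\left(\{y\in\overline E:\ \delta^*-\beta\le|\phi^\tau(y)x|\le\delta^*+\beta\}\right)=0 .
\]
I would prove it by contradiction: extracting $x_k\to x^*$ on the compact unit sphere, reverse Fatou on the finite‑measure set $\overline E$ gives $\limsup_k\ell(A_k)\le\ell(\limsup_kA_k)$ with $A_k=\{y\in\overline E:\delta^*-1/k\le|\phi^\tau(y)x_k|\le\delta^*+1/k\}$, and continuity of $\phi$ forces $\limsup_kA_k\subset\{y\in\overline E:|\phi^\tau(y)x^*|=\delta^*\}$, which has Jordan measure zero by \dref{Jordan} and hence Lebesgue measure zero — the contradiction. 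Granting this, fix $\varepsilon'>0$, pick $\beta$ making the supremum at most $\varepsilon'\ell(O)$, pick $\rho\in(0,\mbox{dist}(\overline{\mathcal S},\partial E))$ from the uniform continuity of $\phi$ on $O$, and take $r$ so large that every box of $\mathcal T(O,r)$ has diameter $<\rho$. Since $\partial U_x\subset\partial\mathcal S\cup\{y\in\overline{\mathcal S}:|\phi^\tau(y)x|=\delta^*\}$, any box meeting the second set lies within distance $\rho$ of $\overline{\mathcal S}\subset E$, hence in $\overline E$, and $\phi$ varies by less than $\beta$ across it, so it lies inside the band $\{y\in\overline E:\delta^*-\beta\le|\phi^\tau(y)x|\le\delta^*+\beta\}$; the boxes being interior‑disjoint of volume $\ell(O)r^{-n}$, their number is $\le\varepsilon' r^{n}$, while only $O(r^{n-1})$ boxes meet the finite union of faces $\partial\mathcal S$. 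This gives the replacement of \dref{uor}: for every $\varepsilon'>0$ there is $r$ with $\sup_{\|x\|=1}|\{U\in\mathcal T(O,r):\partial U_x\cap U\neq\emptyset\}|\le\varepsilon' r^{n}+O(r^{n-1})$.

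\emph{Step 3 (the eigenvalue bound).} Lemma~\ref{gxwy} now goes through after the single change that $r$ is chosen so that $\overline{\rho}^{\,n}\ell(O)\,r^{-n}|\{U\in\mathcal T(O,r):\partial U_x\cap U\neq\emptyset\}|<\epsilon/2$ for all unit $x$ (possible by Step~2), which gives $\ell(\Delta U_{\epsilon,x})<\epsilon/(2\overline{\rho}^{\,n})$ and so the lower processes $g_\epsilon$ exactly as before. Then the proof of Proposition~\ref{cruc} is reproduced line for line: \dref{pk1} now invokes the A3‑version of Lemma~\ref{inf>0}(ii) from Step~1, \dref{infa} follows from the modified Lemma~\ref{gxwy} and \cite[Theorem 2.8]{cg91}, and one obtains a random $T$ and a constant $C_P>0$ with $\frac1{N_t(M)}\sum_{i=1}^tI_{\{\|Y_{i-n}\|\le M\}}I_{\{Y_i\in U_x\}}\ge C_P/2$ for all $t>T$ and all unit $x$, a.s.\ on $\Omega(M)\cap\{\|\theta\|\le K\}$; choosing $C_\phi$ larger than a constant $M_\phi$ depending only on $\phi$,
\[
\lambda_{\min}(t+1)\ \ge\ (\delta^*)^2\sum_{i=1}^{t-n+1}I_{\{Y_i\in U_x\}}\ \ge\ \frac{(\delta^*)^2C_P}{2}\bigl(N_t(M)-n\bigr),
\]
and taking the infimum over $x$ and then $\liminf_t$ yields \dref{eigenN}, which is Proposition~\ref{cruc1}.

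The hard part is Step~2. Assumption A3(ii) is only a pointwise‑in‑$x$ bound on the Jordan measure of one level set, and the level sets $\{y:|\phi^\tau(y)x|=\delta^*\}$ need not vary continuously with $x$, so the uniformity over the unit sphere that Section~\ref{33} requires has to be recovered by thickening to bands of width $\beta$ and combining uniform continuity of $\phi$ on compacts, compactness of the sphere (a reverse‑Fatou / subsequence argument in the spirit of Lemma~\ref{inf>0}), and the fact that Jordan content zero is strictly stronger than being Lebesgue‑null. Everything else is a transcription of the argument of Sections~\ref{cu}--\ref{33} with the machinery of Sections~\ref{cu}--\ref{32} and Appendix~\ref{AppA} deleted.
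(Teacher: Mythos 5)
Your proposal is correct, and its skeleton is the same as the paper's Appendix \ref{AppB}: drop the machinery of Sections \ref{cu}--\ref{32} and Appendix \ref{AppA}, take $\delta^{*}$ directly from \dref{dl*}, define $U_x$ as a super-level set of $|\phi^{\tau}(\cdot)x|$, prove a uniform-in-$x$ covering estimate for the grid boxes meeting $\partial U_x$ (your Step 2 plays the role of \dref{suu}), and then rerun Lemma \ref{gxwy} and the proof of Proposition \ref{cruc} unchanged; your observations that Lemma \ref{inf>0}(i) uses only the triangular relation $d_k=\varsigma_k+o_{k-1}$ and that under \dref{dl*} part (ii) becomes essentially immediate by measure preservation are also right. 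The genuine difference is how the uniformity over the unit sphere is obtained. The paper keeps $U_x=\lbrace |\phi^{\tau}(y)x|>\delta^{*}\rbrace\cap E$ and proves \dref{suu} by contradiction, combining the Jordan-null hypothesis \dref{Jordan} at a limit direction $x^{*}$ with the Hausdorff-type upper semicontinuity $\sup_{y\in V_{x(k)}}\mbox{dist}(y,V_{x^{*}})\rightarrow 0$ and a comparison of dyadic grids at two scales; you instead thicken the level set into the band $\lbrace \delta^{*}-\beta\leq|\phi^{\tau}(y)x|\leq\delta^{*}+\beta\rbrace$, make its Lebesgue measure uniformly small via compactness of the sphere, reverse Fatou on $\overline{E}$ and the implication Jordan null $\Rightarrow$ Lebesgue null, and then convert measure into a box count through uniform continuity of $\phi$. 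You also interpose a finite union of boxes $\mathcal{S}$ with $\overline{\mathcal{S}}\subset E$ (built by a Lemma \ref{useful}/\ref{qq}-type exhaustion), so that $\partial U_x\subset\partial\mathcal{S}\cup\lbrace y\in\overline{\mathcal{S}}:|\phi^{\tau}(y)x|=\delta^{*}\rbrace$ with the first part contributing only $O(r^{n-1})$ boxes. What each route buys: your band/Fatou argument is purely measure-theoretic and avoids the level-set convergence and grid-scale bookkeeping, and the inner approximation $\mathcal{S}$ eliminates any contribution of $\partial E$ to $\partial U_x$ (a point the paper's inclusion $\partial(U_x)\subset V_x$ passes over silently), at the cost of one extra, routine exhaustion step that the paper avoids by working with $E$ itself.
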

\begin{proof}[Proof of Proposition \ref{cruc1}]
The proof is similar as that of Proposition \ref{cruc} but more concise due to Assumption A3.
First, we need not to construct $\mathcal{S}$ from Lemmas \ref{useful}--\ref{qq}. As a matter of fact, taking  $\delta^{*}$  from \dref{dl*} in Assumption A3, Lemma \ref{inf>0}  follows with $\mathcal{S}$ replaced by $E$.
 So, for every unit vector $x\in\mathbb{R}^{m}$, we can directly define
$$
U_x\triangleq\lbrace y: |\phi^{\tau}(y)x|>\delta^{*} \rbrace\cap E.
$$

Next, with random process $g_x$ defined in Subsection \ref{33}, we proceed to Lemma \ref{gxwy}. To show this lemma in the current case, we are not going to verify \dref{jc1}   by using Lemmas \ref{imp}--\ref{xy}. Instead,  we intend to claim another formula. For this,
select a box $O$ containing $\overline{E}$ and define
\begin{eqnarray}\label{kdef1}
\mathcal{T}(x,O,r)  \triangleq \left\{ U\in \mathcal{T}(O,r): \partial{U_x} \cap U \neq \emptyset \right   \},
\end{eqnarray}
where $\mathcal{T}(O,r)$ is defined above \dref{kdef}. The remainder is mainly devoted to proving
\begin{eqnarray}\label{suu}
\lim_{r\rightarrow+\infty}\sup_{\|x\|=1}\sum_{U\in \mathcal{T}(x,O,r) }\ell(U)=0.
\end{eqnarray}

To show \dref{suu}, note that
\begin{eqnarray}
\partial(U_x)\subset V_x\triangleq \lbrace y\in \overline{E}: |\phi^{\tau}(y)x|=\delta^{*}\rbrace.
\end{eqnarray}
Denote $W(x,r)\triangleq\bigcup_{U\in \mathcal{T}'(x,O,r)}U$, where  % $V_x=\lbrace y\in \overline{E}: |\phi^{\tau}(y)x|=\delta^{*}\rbrace$,
\begin{eqnarray}\label{kdef2}
\mathcal{T}'(x,O,r)  \triangleq \left\{ U\in \mathcal{T}(O,r): V_x \cap U \neq \emptyset \right   \}.
\end{eqnarray}
So,
it suffices to show
\begin{eqnarray}\label{suu1}
\lim_{r\rightarrow+\infty}\sup_{\|x\|=1}\ell(W(x,r))=0.
\end{eqnarray}
If \dref{suu1} is false, then there is a number $\varepsilon>0$ and a unit vector sequence $\lbrace x(k)\rbrace_{k=1}^{+\infty}$ such that $\lim_{k\rightarrow+\infty}x(k)=x^{*}$ for some unit vector $x^{*}$ and
\begin{eqnarray}\label{W>e}
\ell(W(x(k),2^k))>\varepsilon,\quad \forall k\geq 1.
\end{eqnarray}
Now, according to the definition of the Jordan measure,
 \dref{Jordan} in  Assumption A3(ii) indicates that
$
\lim_{r\rightarrow+\infty}\ell(W(x^{*},r))=0.$ Moreover,
since
$$
\lim_{k\rightarrow+\infty}\sup_{y\in V_{x(k)}}\mbox{dist}(y,V_{x^{*}})=0,
$$
 for any $\varepsilon'>0$ and all sufficiently large integers $k',k$ with $k'<k$,
\begin{eqnarray}
|\mathcal{T}'(x^{*},O,2^{k})|<\frac{\varepsilon' 2^{kn}}{\ell(O)}\nonumber
\end{eqnarray}
and
\begin{eqnarray}
|\mathcal{T}'(x(k),O,2^{k})|<(1+2^{k-k'+1})^n|\mathcal{T}'(x^{*},O,2^{k})|.\nonumber
\end{eqnarray}
The above two inequalities immediately lead to
\begin{eqnarray}
\ell(W(x(k),2^k))=\frac{\ell(O)}{2^{kn}}\cdot|\mathcal{T}'(x(k),O,2^{k})|<(1+2^{k-k'+1})^n\varepsilon',\nonumber
\end{eqnarray}
which contradicts to \dref{W>e} by  selecting $k'=k-1$ and $\varepsilon'<5^{-n}\varepsilon$.

Finally, \dref{jc1} follows from \dref{suu} and hence  Lemma \ref{gxwy} holds. The rest of the procedures
thus keep the same as  those for Proposition \ref{cruc}.
\end{proof}

%\section*{Acknowledgements}
%And this is an acknowledgements section with a heading that was produced by the
%$\backslash$section* command. Thank you all for helping me writing this
%\LaTeX\ sample file. See \ref{suppA} for the supplementary material example.

%\begin{supplement}
%\sname{Supplement A}\label{suppA}
%\stitle{Title of the Supplement A}
%\slink[url]{http://www.e-publications.org/ims/support/dowload/imsart-ims.zip}
%\sdescription{Dum esset rex in
%accubitu suo, nardus mea dedit odorem suavitatis. Quoniam confortavit
%seras portarum tuarum, benedixit filiis tuis in te. Qui posuit fines tuos}
%\end{supplement}


\begin{thebibliography}{9}

%\bibitem{r1}
%\textsc{Billingsley, P.} (1999). \textit{Convergence of
%Probability Measures}, 2nd ed.
%Wiley, New York.
%\MR{1700749}


%\bibitem{r2}
%\textsc{Bourbaki, N.}  (1966). \textit{General Topology}  \textbf{1}.
%Addison--Wesley, Reading, MA.

%\bibitem{r3}
%\textsc{Ethier, S. N.} and \textsc{Kurtz, T. G.} (1985).
%\textit{Markov Processes: Characterization and Convergence}.
%Wiley, New York.
%\MR{838085}
%\bibitem{ahz96}
%\textsc{An, H. Z.} and \textsc{Huang, F. C.}(1996). The geometrical ergodicity of nonlinear autoregressive models. \textit{Statist. Sinica} 943--956.


%%\bibitem{Astr95}
%%\textsc{\AA str\"om,  \sc K. J. } \textsc{and Wittenmark, B. }(1995).  \textit{Adaptive Control}, 2nd
%%ed.,  Addison-Wesley: Reading, MA.

%\bibitem{cai00}
%\textsc{Cai, Z.}, \textsc{Fan, J.} and \textsc{Yao, Q.}(2000). Functional-coefficient regression models for nonlinear time series. \textit{J. Amer. Statist. Assoc.} \textbf{95} 941--956.

%\bibitem{chan85}
%\textsc{Chan, K. S.} and \textsc{Tong H.}(1985).  On the use of the deterministic Lyapunov function for the ergodicity of stochastic difference equations.  \textit{Adv. Appl. Prob.} \textbf{17} 666-678.

\bibitem{chan93}
\textsc{Chan, K. S.}(1993). Consistency and limiting distribution of the least squares estimator of a threshold autoregressive model. \textit{Ann. Statist.} \textbf{21} 520--533.

\bibitem{chan98}
\textsc{Chan, K. S.} and \textsc{Tsay, R. S.}(1998). Limiting properties of the least squares estimator of a continuous threshold autoregressive model. \textit{Biometrika} \textbf{85} 413--426.

\bibitem{cg91}
\textsc{Chen, H. F.} and \textsc{Guo, L.}(1991). \textit{Identification and Stochastic Adaptive
Control}. Birkhauser: Boston, MA.

%\bibitem{chen91}
%\textsc{Chen, R.} and  \textsc{Tsay, R. S.}(1991). On the ergodicity of TAR (1) processes. \textit{Ann. Appl. Probab.} \textbf{1} 613--634.

%\bibitem{chen93}
%\textsc{Chen, R.} and  \textsc{Tsay, R S.}(1993). Functional-coefficient autoregressive models. \textit{J. Amer. Statist. Assoc.} \textbf{88} 298--308.

\bibitem{ef63}
\textsc{Eicker, F.}(1963).  Asymptotic Normality and Consistency of the Least Squares Estimators for Families of Linear Regressions, \textit{ Ann. Math. Statist.} \textbf{34} 447--456.

%\bibitem{gao13}
%\textsc{Gao, J.}, \textsc{Tjøstheim D.}, and \textsc{Yin. J.}(2013). Estimation in threshold autoregressive models with a stationary and a unit root regime. \textit{J. Econom. } \textbf{172} 1-13.


\bibitem{guo95}
\textsc{Guo, L.}(1995).  Convergence and logarithm laws of self-tuning
regulators. \textit{Automatica} \textbf{31} 435--450.


%\bibitem{guo97}
%\textsc{Guo, L.}(1997).  On critical stability of discrete-time adaptive nonlinear control.  \textit{IEEE Trans. Autom. Control} \textbf{42} 1488--1499.



%\bibitem{goodwin}
%\textsc{Goodwin, G. C.} and\textsc{Sin, K. S.}(1984).  \textit{Adaptive Filtering,
%Prediction and Control}, Englewood Cliffs, NJ: Prentice-Hall.

%\bibitem{hall2014}
%\textsc{Hall, P.} and \textsc{Heyde, C. C.}(2014) \textit{Martingale Limit Theory and Its Application.} Academic press.


%\bibitem{HV81}
%\textsc{Haggan, V} and  \textsc{Ozaki, T.}(1981)  Modelling nonlinear random vibrations using an amplitude-dependent autoregressive time series model. \textit{Biometrika} \textbf{68} 189--196.

%\bibitem{IS96}
%\textsc{Ioannou, P. A.} and\textsc{Sun, J.}(1996).  \textit{Robust Adaptive Control},
%Prentice-Hall: Englewood Cliffs, NJ.



%\bibitem{ka}
%\textsc{Kokotovi\'{c}, P. V.} and\textsc{Arcak, M.}(1999).
% Constructive nonlinear control:
%Progress in the 90s, in \textit{Proc. 14th IFAC World Congr.},
%Beijing, China, 49--77.

%\bibitem{KKK95}
%\textsc{Krsti\'{c}, M.}, \textsc{Kanellakopoulos, I.} and \textsc{Kokotovi\'{c}, P. V.}(1995). \textit{Non-linear and Adaptive Control Design},
%Wiley: New York.
%\bibitem{kumar90}
%\textsc{Kumar, P. R.}(1990). Convergence of adaptive control schemes using least-squares parameter estimates. \textit{IEEE Trans. Autom. Control} \textbf{35} 416--424.


\bibitem{laiwei82}
\textsc{Lai, T. L.} and \textsc{Wei, C. Z.}(1982).
Least Squares Estimates in Stochastic Regression Models with Applications to Identification and Control of Dynamic Systems.
\textit{Ann. Statist.} \textbf{10} 154--166.


\bibitem{laiwei83}
\textsc{Lai, T. L.} and \textsc{Wei, C. Z.}(1983). Asymptotic properties of general autoregressive models and strong consistency of least-squares estimates of their parameters. \textit{J. Multivariate Anal.}  \textbf{13} 1--23.

%\bibitem{liguo08}
%\textsc{Li, C.} and \textsc{Guo, L.}(2008).
%On adaptive stabilization of nonlinearly parameterized discrete-time systems, in \textit{Proc. 27th Chinese Control %Conference}, Kunming,   484--487.



%\bibitem{liguo10}
%\textsc{Li, C.} and \textsc{Guo, L.}(2010).  A new critical theorem for adaptive
%nonlinear stabilization. \textit{Automatica}  \textbf{46}  999--1007.




% \bibitem{liguo11}
%\textsc{Li, C.} and \textsc{Guo, L.}(2011). On feedback capability in a class of
%nonlinearly parameterized uncertain systems.  \textit{IEEE
%Trans. Autom. Control} \textbf{56} 2946--2951.

%\bibitem{liguo13}
%\textsc{Li, C.} and \textsc{Guo, L.}(2013). A dynamical inequality for the output of
%uncertain nonlinear systems. \textit{Science China} \textbf{56}  1--9.

\bibitem{lilam13}
\textsc{Li, C.} and \textsc{Lam, J.}(2013). Stabilization of discrete-time nonlinear uncertain systems by feedback based on LS algorithm. \textit{SIAM J. Control Optim.} \textbf{51} 1128--1151.

%\bibitem{lichen14}
%\textsc{Li, C.} and \textsc{Chen, M. Z. Q.}(2014). On stabilizability of nonlinearly parameterized discrete-time systems, %\textit{IEEE Transactions on Automatic Control} \textbf{59} 3014--3019.

%\bibitem{lichen16}
%\textsc{Li, C.} and \textsc{Chen, M. Z. Q.}(2016) . Simultaneous identification and stabilization  of nonlinearly  %parameterized discrete-time systems  by nonlinear least squares algorithm. \textit{IEEE
%Trans. Autom. Control} \textbf{61} 1810 - 1823.


%\bibitem{lixie06}
%\textsc{Li, C.} and\textsc{Xie, L.-L.}(2006). On robust stability of discrete-time
%adaptive nonlinear control. \textit{Syst. Control Lett.} \textbf{55}
%452--458.

%\bibitem{lixieguo06}
%\textsc{Li, C.}, \textsc{Xie, L.-L.} and \textsc{Guo, L.}(2006). A polynomial criterion for adaptive stabilizability of discrete-time nonlinear systems. \textit{Commun. Inform. Syst.} \textbf{6} 273--298.
\bibitem{lid12}
\textsc{Li, D.} and \textsc{Ling, S.}(2012). On the least squares estimation of multiple-regime threshold autoregressive models. \textit{J. Econometrics} \textbf{167} 240--253.

\bibitem{lid16}

\textsc{Li, D.}, \textsc{Tjøstheim, D.} and \textsc{Gao, J.}(2016). Estimation in nonlinear regression with Harris recurrent Markov chains. \textit{Ann. Statist.}  \textbf{44} 1957-1987.

%\bibitem{lie05}
%\textsc{Liebscher, E.}(2005). Towards a unified approach for proving geometric ergodicity and mixing properties of nonlinear autoregressive processes. \textit{J. Time Ser. Anal.} \textbf{26} 669--689.



%\bibitem{luz98}
%\textsc{Lu, Z.}(1998). On the geometric ergodicity of a non-linear autoregressive model with an autoregressive conditional heteroscedastic term. \textit{Statist. Sinica} 1205--1217.

%\bibitem{Ma08n}
% \textsc{Ma, H. B.}(2008). Further results on limitations to the capability of feedback. \textit{International Journal of %Control} \textbf{81}  21--42.

%\bibitem{Patrick}
%\textsc{Billingsley, P.}(1995) \textit{Probability and Measure.} John Wiley  Sons.


%\bibitem{pjd84}
%\textsc{Petruccelli, J. D.} and \textsc{Woolford S W}(1984). A threshold AR (1) model. \textit{J. Appl. Probab.}  \textbf{21} 270--286.


%\bibitem{rp68}
%\textsc{R\'ev\'esz, P.}(1968). \textit{The Laws of Large Numbers.} Academic Press, New York.


\bibitem{St77}
 \textsc{Sternby, J.}(1977). On consistency for the method of least squares using martingale theory. \textit{IEEE
Trans. Autom. Control} \textbf{22}  346--352.

%\bibitem{TK}
%\textsc{ Tao, G.} and \textsc{Kokotovic, P. V. }(1996).  \textit{Adaptive Control of Systems with
%Actuator and Sensor Nonlinearities}.  Wiley.


%\bibitem{TH83}
%\textsc{Tong, H.}(1983) \textit{Threshold Models in Non-linear Time Series Analysis}. \textit{Lecture Notes in Statist.} \textbf{21}. Springer, Berlin.

%\bibitem{xieguo99}
%\textsc{Xie, L.-L. } and \textsc{Guo, L.}(1999). Fundamental limitations of discrete-time adaptive nonlinear control. \textit{IEEE Trans. Autom. Control} \textbf{44} 1777--1782.

%\bibitem{xieguo00a}
%\textsc{Xie, L.-L. } and \textsc{Guo, L.} (2000).
%``Adaptive control of discrete-time
%nonlinear systems with structural uncertainties'',  \textit{In:
% Lectures on Systems, Control, and Information,  AMS/IP,}
% 49--90.

%\bibitem{xieguo00b}
%\textsc{Xie, L.-L. } and \textsc{Guo, L.} (2000).
%``How much uncertainty can be dealt
%with by feedback?'' \textit{IEEE Trans. Autom. Control}
%\textbf{45} 2203--2217.


%\bibitem{xueguohuang}
%\textsc{Xue, F.}, \textsc{Guo, L.} and \textsc{ Huang, M. Y.}(2001).
%``Towards understanding the
%capability of adaptation for time-varying systems''. \textit{Automatica}
%\textbf{37} 1551--1560.

%\bibitem{xueguo}
%\textsc{Xue, F.} and \textsc{Guo, L.}(2002).
%On limitations of the sampled-data
%feedback for nonparametric dynamical systems. \textit{J. Syst. Sci.
%Complex.}
%\textbf{15} 225--250.


%\bibitem{zhangguo}
%\textsc{Zhang, X. Y.} and \textsc{Guo, L.}(2002).
%A limit to the capability of feedback. \textit{IEEE Trans. Autom. Control}
%\textbf{47} 687--692.

\bibitem{zwx10}
\textsc{Zhao, W. X.}, \textsc{Chen, H. F.} and \textsc{Zheng, W. X.}(2010). Recursive identification for nonlinear ARX systems based on stochastic approximation algorithm. \textit{IEEE Trans. Autom. Control} \textbf{55} 1287--1299.


\end{thebibliography}
\end{document}